\DeclareMathAlphabet{\mathcalligra}{T1}{calligra}{m}{n}
\DeclareFontShape{T1}{calligra}{m}{n}{<->s*[1.5]callig15}{}
\newtheorem{theorem}{Theorem}[section]
\newtheorem*{theoremstar}{Theorem}
\newtheorem{lemma}[theorem]{Lemma}
\newtheorem{corollary}[theorem]{Corollary}
\theoremstyle{definition}
\newtheorem{definition}[theorem]{Definition}
\newtheorem{example}[theorem]{Example}
\newtheorem{remark}[theorem]{Remark}
\newtheorem{theorem-definition}[theorem]{Theorem-Definition}
\numberwithin{equation}{section}
\renewcommand{\AA} {\mathbb{A}}
\newcommand{\CC} {\mathbb{C}}
\newcommand{\DD} {\mathbb{D}}
\newcommand{\GG} {\mathbb{G}}
\newcommand{\LL} {\mathbb{L}}
\newcommand{\PP} {\mathbb{P}}
\newcommand{\RR} {\mathbb{R}}
\newcommand{\ZZ} {\mathbb{Z}}
\newcommand {\shA} {\mathcal{A}}
\newcommand {\shB} {\mathcal{B}}
\newcommand {\shC} {\mathcal{C}}
\newcommand {\shE} {\mathcal{E}}
\newcommand {\shF} {\mathcal{F}}
\newcommand {\shH} {\mathcal{H}}
\newcommand {\shK} {\mathcal{K}}
\newcommand {\shM} {\mathcal{M}}
\newcommand {\shT} {\mathcal{T}}
\newcommand {\shP} {\mathcal{P}}
\newcommand {\sC} {\mathscr{C}}
\newcommand {\sD} {\mathscr{D}}
\newcommand {\sE} {\mathscr{E}}
\newcommand {\sF} {\mathscr{F}}
\newcommand {\sG} {\mathscr{G}}
\newcommand {\sI} {\mathscr{I}}
\newcommand {\sL} {\mathscr{L}}
\newcommand {\sN} {\mathscr{N}}
\newcommand {\sO} {\mathscr{O}}
\newcommand {\sU} {\mathscr{U}}
\newcommand {\fol}  {\mathfrak{l}}
\newcommand {\fos}  {\mathfrak{s}}
\newcommand {\foR} {\mathfrak{R}}
\newcommand{\blank}{\underline{\hphantom{A}}}
\newcommand {\codim} {\operatorname{codim}}
\newcommand {\coh} {\operatorname{coh}}
\newcommand {\Coker} {\operatorname{Coker}}
\newcommand {\cone} {\operatorname{cone}}
\newcommand {\D} {\operatorname{D}}
\newcommand {\End} {\operatorname{End}}
\newcommand{\sExt}{\mathscr{E} \kern -1pt xt}
\newcommand{\Hilb}{\mathrm{Hilb}}
\newcommand {\Hom} {\operatorname{Hom}}
\newcommand {\sHom}{\mathscr{H}\kern-5pt\mathcalligra{om}}
\newcommand {\id} {\operatorname{id}}
\newcommand {\Id} {\operatorname{Id}}
\newcommand {\im} {\operatorname{im}}
\newcommand {\kk} {\Bbbk}
\renewcommand {\ker } {\operatorname{Ker}}
\newcommand {\Ker} {\operatorname{Ker}}
\newcommand {\Pic} {\operatorname{Pic}}
\newcommand {\Proj} {\operatorname{Proj}}
\newcommand {\pr} {\operatorname{pr}}
\newcommand {\rank} {\operatorname{rank}}
\newcommand {\Spec} {\operatorname{Spec}}
\newcommand {\Sym} {\operatorname{Sym}}
\newcommand{\sTor}{\mathscr{T} \kern -3pt or}
\newcommand {\Tot} {\operatorname{Tot}}
\newcommand {\Bl} {\operatorname{Bl}}
\newcommand {\tX} {X_\sigma^-}
\newcommand {\pX} {X_\sigma^+}
\newcommand {\bL} {\mathbf{L}}
\newcommand {\bR} {\mathbf{R}}
\title[]{Derived categories of projectivizations and flops}
\author[Q.Y.\ JIANG, N.C.\ Leung]{Qingyuan Jiang, Naichung Conan Leung}
\address{School of Mathematics, University of Edinburgh, James Clerk Maxwell Building, Peter Guthrie Tait Road, Edinburgh EH9 3FD, United Kingdom.}
\email{qingyuan.jiang@ed.ac.uk}
\address{The Institute of Mathematical Sciences and Department of Mathematics,
The Chinese University of Hong Kong, Shatin, N.T., Hong Kong}\email{leung@math.cuhk.edu.hk}
\begin{document}

\begin{abstract} We prove a generalization of Orlov's projectivization formula for the derived category $D^b_{\rm coh} (\mathbb{P}(\mathscr{E}))$, where $\mathscr{E}$ does not need to be a vector bundle; Instead, $\sE$ is a coherent sheaf which locally admits two-step resolutions. As a special case, this also gives Orlov's generalized universal hyperplane section formula.  As applications, (i) we obtain a blowup formula for blowup along codimension two Cohen-Macaulay subschemes, (ii) we obtain new ``flop-flop=twist" results for a large class of flops obtained by crepant resolutions of degeneracy loci.  As another consequence, this gives a perverse schober on C. (iii) we give applications of above results to symmetric powers of curves and $\Theta$-flops, following Toda \cite{Tod2}. 

\vspace{-2mm} 
\end{abstract}

\maketitle

\section{Introduction}

The derived category of coherent sheaves $D(X) := D^b_{\coh}(X)$ on a scheme $X$, introduced by Grothendieck and Verdier in the 1950s, is a primary algebraic invariant for $X$. An important question is that how derived categories behave under basic geometric operations. 

\subsection{Projectivization formula}
In \cite{Orlov92}, Orlov shows that the derived category of a {projective bundle} $\PP(\sG)$ over $X$ consists exactly of $r$ copies of $D(X)$, where $\sG$ is a vector bundle of rank $r$, see Thm. \ref{thm:proj_bundle}. It is an interesting question what happens if $\sG$ is not locally free. 

%In this paper, we answer this question in the case when $\sG$ locally admits two--step resolutions, i.e. Zariski locally over $X$, there is a resolution:$\sF \xrightarrow{~\sigma~} \sE \twoheadrightarrow \sG,$such that $\sF$ and $\sE$ are locally free of rank $f$ and $e$ respectively, $\sG = {\rm coker}(\sigma)$ is of rank $r: = \rank(\sG) = e - f \ge 0$. 
In this paper, we answer this question in the case when $\sG$ is a coherent sheaf of  homological dimension $\le 1$ on a regular scheme $X$, i.e. Zariski locally over $X$, $\sG$ admits a two-step resolution 
	$0 \to \sF \xrightarrow{\sigma} \sE \to \sG \to 0$,
where $\sF$ and $\sE$ are locally free of rank $f$ and $e$ respectively. Then $\sG$ has rank $r: = e - f \ge 0$.
Notice that this condition is equivalent to $\sExt^i_{\sO_X}(\sG,\sO) =0$ for $i \ge 2$ (if $X$ is regular), and it is satisfied by $\sG = {\rm coker}(\sigma)$ for a general $\sO_X$-module morphism $\sigma \colon \sF \to \sE$, if $\sHom_X(\sF,\sE)$ is globally generated. 

$\PP(\sG)$ is generically a $\PP^{r-1}$-bundle over $X$, and a generic $\PP^r$-bundle over the {\em degeneracy locus} $X_{\sigma} = \{x \in X \mid \rank \sG(x) > r \}$; The fiber dimension jumps by more over further degeneracy loci, see \S \ref{sec:deg}. On the other hand, the sheaf $\sExt^1(\sG,\sO_X)$ is supported on $X_{\sigma}$, and $\PP(\sExt^1(\sG,\sO_X))$ is a {Springer--type partial desingularization} of $X_\sigma$, locally given by:
	$$\PP(\sExt^1(\sG,\sO_X)) |_U  = \{(x, H_x) \mid \im \sigma^\vee(x) \subset H_x \} \subset \PP_U(\sF^\vee).$$
where $x \in U \subset X$, and $H_x \subset \sF^\vee(x)$ denotes a hyperplane. Our first main result is:

\begin{theoremstar}[Projectivization formula, Thm. \ref{cor:projectivization}] There is a semiorthogonal decomposition 
	$$D(\PP(\sG)) = \big \langle D(\PP(\sExt^1(\sG,\sO_X))), ~D(X)(1), \ldots, D(X)(r)\big \rangle,$$
provided that $\PP(\sG)$ and $\PP(\sExt^1(\sG,\sO_X))$ are of the expected dimensions.
\end{theoremstar}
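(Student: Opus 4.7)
The plan is to realize both $\PP(\sG)$ and $\PP(\sExt^1(\sG,\sO_X))$ as zero loci of regular sections inside auxiliary projective bundles $\PP(\sE)$ and $\PP(\sF^\vee)$, and to bridge them via the natural incidence correspondence cut out by $\sigma$. Working Zariski-locally, fix a two-step resolution $\sF\xrightarrow{\sigma}\sE\twoheadrightarrow\sG$ and embed $i:\PP(\sG)\hookrightarrow\PP(\sE)$ via the induced surjection $\Sym\sE\twoheadrightarrow\Sym\sG$. Composing $\sigma$ with the universal quotient $\sE\twoheadrightarrow\O_{\PP(\sE)}(1)$ produces a canonical section of $\sF^\vee\otimes\O_{\PP(\sE)}(1)$ whose vanishing locus is exactly $\PP(\sG)$; the expected-dimension hypothesis forces this section to be regular of codimension $f$, so that the Koszul complex with terms $\wedge^j\sF\otimes\O_{\PP(\sE)}(-j)$ is a locally free resolution of $i_*\O_{\PP(\sG)}$ on $\PP(\sE)$. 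A symmetric construction realizes $\PP(\sExt^1(\sG,\sO_X))$ as the zero locus of a regular section of $\sE\otimes\O_{\PP(\sF^\vee)}(1)$ inside $\PP(\sF^\vee)$.

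The mutual semiorthogonality and fully faithfulness of the functors $\pi^*(\cdot)\otimes\O(k):D(X)\to D(\PP(\sG))$ for $k=1,\ldots,r$ then follow from a direct Koszul computation: combining the above resolution of $i_*\O_{\PP(\sG)}$ with Orlov's vanishing $R\pi_{\sE,*}\O_{\PP(\sE)}(m)=0$ for $-(e-1)\le m\le -1$, one obtains $R\pi_{\sG,*}\O_{\PP(\sG)}=\O_X$ together with $R\pi_{\sG,*}\O_{\PP(\sG)}(-m)=0$ for $1\le m\le r-1$, which immediately produces the $r$ components $D(X)(1),\ldots,D(X)(r)$ as a semiorthogonal chunk of $D(\PP(\sG))$.

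The heart of the argument is to identify the right orthogonal $\langle D(X)(1),\ldots,D(X)(r)\rangle^\perp$ with (the image of) $D(\PP(\sExt^1(\sG,\sO_X)))$. For this, introduce the incidence variety
$$Z=\{(L,K)\mid\sigma^\vee(L)\subset K\}\subset\PP(\sE)\times_X\PP(\sF^\vee),$$
defined as the zero locus of the canonical section of $\O(1,1)$ induced by $\sigma$; its two projections $p:Z\to\PP(\sE)$ and $q:Z\to\PP(\sF^\vee)$ are projective subbundles of relative dimensions $f-2$ and $e-2$, degenerating to full projective spaces precisely over $\PP(\sG)$ and $\PP(\sExt^1(\sG,\sO_X))$ respectively. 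Combining Orlov's projective bundle formula for $\PP(\sE)$ and $\PP(\sF^\vee)$ with the two Koszul resolutions above yields two semiorthogonal decompositions of $D(Z)$, and a sequence of mutations matching them transplants the component $j_*D(\PP(\sExt^1(\sG,\sO_X)))\subset D(\PP(\sF^\vee))$ fully faithfully into $D(\PP(\sG))$, landing in the desired orthogonal complement. Generation of $D(Z)$ by these two decompositions then forces the resulting decomposition of $D(\PP(\sG))$ to be complete.

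I expect the principal obstacle to be this last mutation step: because both $\PP(\sG)$ and $\PP(\sExt^1(\sG,\sO_X))$ are generally singular along the degeneracy loci, the usual blow-up and base-change identities must be replaced by Koszul-based vanishings, and the twists have to be tracked carefully so that the components appear in the claimed order. The expected-dimension hypothesis is indispensable throughout: it guarantees regularity of the three sections above, the correct dimension (and local complete intersection structure) of $Z$, and the derived base-change identities needed to align the two decompositions of $D(Z)$.
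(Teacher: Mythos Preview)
Your strategy is essentially the paper's: realize $\PP(\sG)$ and $\PP(\sExt^1(\sG,\sO_X))$ as zero loci of regular sections via Cayley's trick, embed both derived categories into $D(Z)$ for the incidence divisor $Z=\shH\subset\PP(\sE)\times_X\PP(\sF^\vee)$ using Orlov's universal hyperplane theorem, and then compare the two resulting decompositions of $D(\shH)$ by mutations. The paper packages the mutation step you flag as the ``principal obstacle'' into the rectangular ``chess game'' formalism (Appendix~\ref{sec:app}), and also makes explicit the local-to-global gluing via $\sExt^1(\sG,\sO_X)$ that your outline leaves implicit.
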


This theorem simultaneously generalizes (i) Orlov's projective bundle formula  \cite{Orlov92} (corresponding to the case $\sExt^1(\sG,\sO_X) = 0$), see Thm. \ref{thm:proj_bundle}; (ii) Orlov's generalized universal hyperplane section formula  \cite{Orlov05} (corresponding to the case $\sExt^1(\sG,\sO_X) \simeq \sO_Z$, where $Z \subset X$ a local complete intersection subscheme), see Thm. \ref{thm:HPDI}; and (iii) the derived equivalence for flops obtained by two Springer partial desingularizations of determinantal hypersurfaces (corresponding to the case $r = 0$), see \S \ref {sec:springer}. 

If $\rank \sG=1$, this formula implies a {\em blowup formula} for blowup along a {\em codimension two Cohen-Macaulay subscheme $Z \subset X$}, see \S \ref{sec:CM2}. We will discuss more applications below.

Orlov's result Thm. \ref{thm:HPDI} was used in \cite{KKLL} to show that all complete intersections in $\PP^n$ are Fano visitors. Our projectivization formula should have applications to the problem of Fano visitors for non-complete intersections subschemes and for resolutions of degeneracy loci. %, and resolutions of degeneracy loci of vector bundles over $\PP^n$.

Our proof is based on the  ``chess game", introduced by Richard Thomas in his reinterpretation \cite{RT15HPD} of Kuznetsov's work \cite{Kuz07HPD}, and further developed in \cite{JLX17}. We will only need a simple situation of the general chess game, the ``rectangular case"; the results needed in this situation are reviewed in Appendix \ref{sec:app}.  The chess game method can be applied to many situations, including cases of homological projective duality and various flops, see \cite{JLX17, RT15HPD, JL18join, JL18Bl}. 

% intro: flop--flop=twists
\subsection{``Flop--flop=twist" results}

The ``flop--flop=twist" phenomenon was famously known for Atiyah flops $X_1 \xleftarrow{q_1} \widetilde{X} \xrightarrow{q_2} X_2$, where $q_1, q_2$ are blowing up of along $(-1,-1)$-curves inside threefolds $X_1$ and $X_2$ respectively. Bondal and Orlov \cite{BO} show that the flopping functor
	$$ \RR q_{2*} \, \LL q_1^{*}: D(X_1) \to D(X_2)$$
	is an equivalence of categories. In this case, the composition
		$$D(X_1) \xrightarrow{\RR q_{2*} \, \LL q_1^{*}} D(X_2) \xrightarrow{\RR q_{1*} \, \LL q_2^{*}} D(X_1)$$
	 of ``flopping equivalences" is not identity, but an interesting nontrivial autoequivalence of $D(X_1)$: the {\em spherical twist} around {\em spherical object} $\sO_{\PP^1}(-1) \in D(X_1)$, introduced by Seidel and Thomas \cite{ST01}. The concept of spherical twists around spherical objects is generalized by Anno \cite{Ann}, Anno and Logvinenko \cite{AL16}, and Rouquier \cite{Rou} to spherical twists around spherical functors, and by Huybrechts and Thomas to $\PP$-twists around $\PP$-objects \cite{HT}.

The ``flop--flop=twist" result for Atiyah flops inspires the study of similar results for other type of flops. For general flops of smooth threefolds, the derived equivalences are established by Bridgeland \cite{Bri}, and later generalized by Chen \cite{Chen} to threefolds with terminal Gorenstein singularities. Toda \cite{Tod} shows the ``flop--flop=twist" phenomena for a class of threefold flops, where the twists are around fat spherical objects. Bodzenta and Bondal \cite{BB} shows similar phenomena for flops of relative dimension $1$. Donovan and Wemyss \cite{DW1, DW2} show it for a class of threefold flops, and the spherical functors are given over certain non-commutative algebras. Cautis \cite{Cau},  Addington, Donovan and Meachan \cite{ADM} show the results for Mukai flops, whose derived equivalences are established by Kawamata \cite{Kaw02} and Namikawa \cite{Nam03}. Hara \cite{Hara17} studies the Abuaf flops whose derived equivalences are obtained by Ed Segal \cite{Seg}. Donovan and Segal show similar results for Grassmannian flops \cite{DS}.

We show that another large class of flops also fit into this picture. More precisely, assume $\sG = {\rm coker} (\sF \xrightarrow{\sigma} \sE)$ and $\rank \sG = 0$. Assume further that $X_1: = \PP(\sG)$ and $X_2: = \PP(\sExt^1(\sG,\sO))$ are smooth and of expected dimensions 
	$$\dim X_1  = \dim X_2 = \dim X_1 \times_X X_2 = \dim X -1.$$ 
Therefore the birational map $X_1 \dashrightarrow X_2$ from the diagram
	$$X_1  \xleftarrow{q_1} X_1 \times_X X_2 \xrightarrow{q_2} X_2$$
 is a flop obtained by two (Springer-type) crepant partial desingularizations of degeneracy locus $X_\sigma$ (see \S \ref{sec:springer}). Denote $\pi_i: X_i \to X$ the natural projections, $i = 1,2$. 

\begin{theoremstar}[``Flop--flop=twist", see Thm. \ref{thm:flop}]  For each $k \in \ZZ$ the functor
\begin{equation*} \Phi_k: D(X_1) \longrightarrow D(X_2), \qquad \Phi_k(\blank) = \RR q_{2*} \, (\LL q_1^{*}(\blank) \otimes \sO(k,k)),\end{equation*}
is an equivalence of categories, and the equivalence functors $\Phi_{k} \colon D(X_1) \simeq D(X_2)$ are related by spherical twists. In particular, the ``flop--flop" equivalence functor:
	$$D(X_1) \xrightarrow{\RR q_{2*} \, \LL q_1^{*}} D(X_2) \xrightarrow{\RR q_{1*} \, \LL q_2^{*}} D(X_1)$$
is a spherical twist (up to twists by line bundles and degree shifts), see Thm. \ref{thm:flop}. 
\end{theoremstar}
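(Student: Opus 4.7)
A first observation is that for every $k \in \ZZ$ the functor $\Phi_k$ is related to $\Phi_0$ by line-bundle twists on source and target:
$$\Phi_k(-) \cong \Phi_0\bigl((-) \otimes \sO_{X_1}(k)\bigr) \otimes \sO_{X_2}(k).$$
Since tensoring by $\sO_{X_i}(k)$ is an autoequivalence, it suffices to prove that $\Phi_0$ is an equivalence; the spherical-twist comparison between distinct $\Phi_k$ will then follow from a separate analysis of the line-bundle conjugation on each side.

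To establish equivalence of $\Phi_0$, I would apply the projectivization formula in the degenerate rank case $r = 0$. The semiorthogonal decomposition then collapses (no summands $D(X)(i)$) to a single equivalence $D(X_1) = D(\PP(\sG)) \simeq D(\PP(\sExt^1(\sG, \sO))) = D(X_2)$, constructed via the rectangular chess-game machinery recalled in Appendix \ref{sec:app}. This embedding has an explicit Fourier--Mukai kernel supported on the incidence variety $X_1 \times_X X_2$; I would match it against $\sO_{X_1 \times_X X_2}$ (the kernel of $\RR q_{2*} \LL q_1^*$) by tracking the Koszul-type resolution appearing in the chess game and using the expected-dimension hypothesis $\dim(X_1 \times_X X_2) = \dim X - 1$ to ensure the relevant derived intersections compute correctly. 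This identifies $\Phi_0$ with the projectivization equivalence, so every $\Phi_k$ is an equivalence.

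For the spherical-twist structure, I would compare consecutive $\Phi_k$ and $\Phi_{k+1}$: their difference $\Phi_{k+1} \circ \Phi_k^{-1} \in \Aut D(X_2)$ should be realised as a spherical twist. Using base change on the fibre square and a natural triangle in $D(X_1 \times_X X_2)$ relating $\sO(k,k)$ and $\sO(k+1, k+1)$ --- whose cone is supported on the flop exceptional locus lying above the deeper degeneracy stratum --- I would express this autoequivalence as the cone of the identity against an integral transform through a Springer-type correspondence with source $D(X)$ (or a natural subcategory thereof). Verifying this integral transform to be spherical then gives the twist formula. The flop--flop composition follows by convolving kernels on $X_1 \times X_1$: the resulting kernel $\RR (\pi_1 \times \pi_1)_* \sO_{X_1 \times_X X_1}$ factorises, up to line-bundle twists and degree shifts, as a cone involving the diagonal $\Delta_* \sO_{X_1}$ and the spherical kernel.

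The principal obstacle is the spherical-functor identification: one must exhibit the functor, verify the existence of all four adjoints, and show invertibility of the twist and cotwist. This demands geometric control over $X_1 \times_X X_2$ near the higher degeneracy strata $X_\sigma^{\ge i}$, where the Springer fibres jump and the fibre product ceases to be a local complete intersection. Handling it rigorously requires extending the chess game beyond its rectangular incarnation, together with careful use of relative Grothendieck duality on the crepant Springer resolutions $\pi_i: X_i \to X$.
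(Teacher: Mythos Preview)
Your reduction of the equivalence statement to $\Phi_0$ via the line-bundle identity $\Phi_k \cong \sO_{X_2}(k)\circ\Phi_0\circ\sO_{X_1}(k)$ is correct, and invoking the projectivization formula in the rank-zero case to get that $\Phi_0$ is an equivalence is exactly what the paper does (via the chess game of Appendix~\ref{sec:app} and Tor-independence of the fibre square). So far your plan matches the paper.

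The spherical-twist part, however, diverges substantially, and your sketch has a genuine gap. You propose to work directly with Fourier--Mukai kernels on $X_1\times_X X_2$, invoking ``a natural triangle in $D(X_1\times_X X_2)$ relating $\sO(k,k)$ and $\sO(k+1,k+1)$ whose cone is supported on the flop exceptional locus.'' No such triangle is available in general: there is no canonical morphism between $\sO(k,k)$ and $\sO(k+1,k+1)$ on the fibre product, and the exceptional locus is stratified by the higher degeneracy loci $D_{r-i}(\sigma)$ rather than being a single smooth centre. The analogous kernel manipulations in \cite{ADM} for standard flops rely on the exceptional locus being a projective bundle over a smooth base, which fails here. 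You correctly flag this as the principal obstacle, but you do not indicate how to overcome it.

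The paper sidesteps this difficulty entirely by never working on $X_1\times_X X_2$ for the twist computation. Instead it stays upstairs in $D(\shH)$, where both $\sD_1=D(X_1)$ and $\sD_2=D(X_2)$ sit as admissible subcategories, and analyses the auxiliary functors $\Psi_k := I_2^*\circ(\otimes\,\sO_\shH(k,k))\circ I_1$. The key step is the rotation Lemma~\ref{lem:Rot:Hyp}, which identifies the restriction of $\otimes\,\sO_\shH(1,1)$ to $\sD_1$ with a right mutation up to shift: $\sigma|_{\sD_1}=\mathbf{R}_{\shE(*,1)}[2]|_{\sD_1}$. One then compares this mutation with the dual twist $T'_{\shE(0,1)}$ by showing, via the staircase pattern of Lemma~\ref{lem:app:cone}, that their difference lies in ${}^\perp\sD_2$ and is therefore annihilated by $I_2^*$. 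This yields $\Psi_k=\Psi_{k-1}\circ T'_{J_k}[2]$ without any direct geometric control over the singular strata of $X_1\times_X X_2$; the stratification is absorbed into the combinatorics of the chessboard. Finally, the spherical property of $S_k=\LL\pi^{+*}(-)\otimes\sO(k)$ is checked separately (Lemma~\ref{lem:spherical_stratified}) by a Koszul computation of $\RR\pi^+_*\sO_{X_1^+}$, giving cotwist $\otimes\sL^\vee$ --- this part is closer in spirit to what you outline, but it is decoupled from the ``flop--flop=twist'' identity itself.
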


\begin{comment}
\begin{theoremstar}[``Flop--flop=twist", see Thm. \ref{thm:flop}] 
	\begin{enumerate} 
		\item For each $k \in \ZZ$ the functor
\begin{equation*} \Phi_k: D(X_1) \longrightarrow D(X_2), \qquad \Phi_k(\blank) = \RR q_{2*} \, (\LL q_1^{*}(\blank) \otimes \sO(k,k)),\end{equation*}
is an equivalence of categories. (Note that $\Phi_0 =  \RR q_{2*} \, \LL q_1^{*} \colon D(X_1) \to D(X_2)$).
		\item The equivalence functors $\Phi_{k} \colon D(X_1) \simeq D(X_2)$  are related by spherical twists:
	$$ \Phi_{k-1}^{-1} \circ \Phi_{k} = T_{S_{-k}}^{-1}[2] \in {\rm Autoeq}(D(X_1)),$$
where $S_{k}$'s are given in Thm. \ref{thm:flop}. In particular, the ``flop--flop" equivalence functor:
	$$D(X_1) \xrightarrow{\RR q_{2*} \, \LL q_1^{*}} D(X_2) \xrightarrow{\RR q_{1*} \, \LL q_2^{*}} D(X_1)$$
is a spherical twist (up to twists by line bundles and degree shifts), see Thm. \ref{thm:flop}. 
	$$\RR q_{1*} \, \LL q_2^{*} \,\, \RR q_{2*} \, \LL q_1^{*}  = T_{S_{0}}^{-1} \circ (\otimes \sL^\vee) [2],$$
where $\sL = \det \sF^\vee \otimes \det \sE$ is a line bundle, $T_{S_{0}} \in {\rm Autoeq(D(X_1))}$ is the twist functor around the spherical functor 
	$S_0 = \LL \pi_1 \colon D(X) \to D(X_1).$
	\end{enumerate}
\end{theoremstar}
\end{comment}
 
 As a consequence, in the case of an Atiyah flop $X_+ \dasharrow X_-$ arising from  two small crepant resolutions of the determinantal threefold $X_\sigma \subset X = \CC^4$, we obtain the following:
	$$T_{\LL \pi^{+*}} = T_{\sO_{\PP^1}(-1)} [2] \in {\rm Autoeq}(D(X_+)),$$
 where $\pi^{+}: \pX \to X$ the projection, and $T_{\LL \pi^{+*}}$ (resp. $T_{\sO_{\PP^1}(-1)}$) is the twist functor around the spherical functor $\LL \pi^{+*}$ (resp. spherical object $\sO_{\PP^1}(-1)$); see Ex. \ref{ex:Atiyah}. See also \S \ref{sec:app:Atiyah}, Lem. \ref{lem:app:Atiyah} for a direct proof of this equality.
 
As another consequence, our theorem also shows that the diagram of categories
  \begin{equation*}
      \xymatrix{
D(X_1) \ar@<-.7ex>[r]_{\LL q^*_1}& D(X_0)
\ar@<-.7ex>[l]_{\RR q_{1*}}
\ar@<.7ex>[r]^{\RR q_{2*}}& D(X_2)
\ar@<.7ex>[l]^{\LL q^*_2}   
}
\end{equation*} 
represents a {\em perverse schober} on $\CC$, where $X_0=X_1 \times_X X_2$.
Perverse schobers are the conjectured categorification of perverse sheaves, proposed by Kapranov and Schechtman \cite{KS}. Bondal, Kapranov and Schechtman \cite{BKS} suggests a close connection between perverse schobers and  flops. The cases of flops of relative dimension one has been studied in \cite{Don, BKS, BB}; \cite{BKS} also studies the cases of Grothendieck resolutions. Our results provide a sequence of examples of higher relative dimensional flops that also present perverse schobers over a disk, and provide further evidences for the proposal of Bondal, Kapranov and Schechtman.
  
It should be noted that our spherical functors are global (similar to \cite{BB}) in nature, see Lem. \ref{lem:spherical_stratified}. Moreover, our spherical functors are naturally stratified. It would be interesting to compare our functor with Horja's EZ-functors \cite{Hor}, Toda's spherical functors in \cite{Tod}, and the twist functors in the noncommutative geometry of Donovan and Wemyss \cite{DW1,DW2}. 

 %In fact, they (see Lem. \ref{lem:spherical_stratified}) can be regarded as a ``stratified version" of Horja's EZ-functors \cite{Hor}. It would be interesting to compare our functors, with the local ones of Horja \cite{Hor}, Toda \cite{Tod} and the noncommutative ones of Donovan and Wemyss \cite{DW1,DW2}. 

\subsection{Applications to symmetric powers of curves and $\Theta$--flops} \label{sec:intro:SymC} The following applications were brought to the authors' attention by Kapranov; see also Toda's work \cite{Tod2}. Let $C$ be a smooth projective curve of genus $g \ge 1$. 
%Then one can realise the symmetric powers as projectivizations: $\Sym^{g-1+n} (C) = \PP(\Coker \sigma)$ and $\Sym^{g-1-n}(C) = \PP(\Coker(\sigma^\vee)$ for certain map $\sigma \colon \sF \to \sE$ between vector bundles over the Picard variety $\Pic^{g-1+n}(C)$. 
Then the {\em projectivization formula} above reproduces Toda's formula \cite[Cor. 5.11]{Tod2} on symmetric powers of curves: for all $n \ge 0$, 
	\begin{align*}
	D(\Sym^{g-1+n} (C))   = \big\langle D(\Sym^{g-1-n} (C)), ~ 
	 D(\Pic^{g-1+n}(C))(1), \ldots, D(\Pic^{g-1+n}(C))(n)\big\rangle.
	\end{align*}
See Cor. \ref{cor:SymC}; Here, $\Sym^k(C)$ denotes the $k$-th symmetric power of $C$. This application has also been independently discovered by Belmans--Krug \cite{BK19}. %(Here $\Sym^k(C)$ denotes the $k$-th symmetric powers of $C$.) 

In the case $n=0$, this yields the derived equivalence for the {\em $\Theta$-flop}:
	$$X_+ =\Sym^{g-1} C \dashrightarrow X_- =\Sym^{g-1} C,$$ 
where $X_{\pm} \to \Theta$ give two different resolutions of the theta divisor $\Theta \subset \Pic^{g-1}(C)$. Moreover, in this case, we obtain a sequence of derived equivalences $\Phi_k \colon D(X_+) \simeq D(X_-)$ for all $k \in \ZZ$, and show that these equivalences are connected with each other by spherical twists. In particular, the derived pullback functor $\LL (AJ)^*$ of the Abel-Jacobi map 
	$$AJ \colon C^{(g-1)}: = \Sym^{g-1}(C) \to \Pic^{g-1}(C)$$
is {\em spherical}, and the {\em ``flop--flop=twists"} result holds for the {$\Theta$-flops}. See \S \ref{sec:Theta}, Cor. \ref{cor:Theta}.

Dimensions and smoothness of the fibre products $\Sym^{g-1+n}(C) \times_{\Pic^{g-1+n}(C)} \Sym^{g-1-n}(C)$ are discussed in Appendix \ref{sec:app:SymC}. This answers a question posed by M. Kapranov.

% other applications
\subsection{Other applications} In Belmans and Krug's paper \cite{BK19}, our theorem, in particular the blowup formula in \S \ref{sec:CM2}, is applied to study the derived category of nested Hilbert schemes of points on surfaces. We include this application in \S\ref{sec:Hilb} for the reader's convenience, and provide more details about the Fourier-Mukai kernels. 

 In \S \ref{sec:Voisin}, we apply our result to {\em Voisin maps}  \cite{Voi16}, \cite{Chen}, which are rational maps from the product of Fano variety of lines on a cubic fourfold to the associated LLSvS eightfold \cite{LLSVS17}.
 
 %Let $Y$ be a cubic fourfold not containing any plane, $F(Y)$ be the Fano variety of lines on $Y$, and $Z(Y)$ be LLSvS eightfold, then  \cite{LLSVS17}.  Voisin \cite{Voi16} constructed a rational map $v \colon F(Y) \times F(Y) \dashrightarrow Z(Y)$ of degree six, Chen \cite{Chen} showed that the Voisin map $v$ can be resolved by blowing up the incident locus $Z = \{ (L_1,L_2) \in F(Y) \times F(Y) \mid L_1 \cap L_2 \ne \emptyset\}$, and the blowing up variety is a natural relative $Quot$ scheme over $Z(Y)$ if $Y$ is very general.
% related works
\subsection{Related works}
The projectivization formula is essentially known, at least in local situations and in a different formulation, to Kuznetsov \cite{Kuz07HPD}. The flop case of the projectivization formula is related to the works of Buchweitz, Leuschke and Van den Bergh \cite{BLV}, Weyman and Zhao \cite{WZ} and Donovan and Segal \cite{DS}. Our approach is based on the ``chess game" method (see \cite{JLX17} and Appendix \ref{sec:app}), which allows us to give explicit descriptions for the Fourier-Mukai functors and to obtain ``flops--flops=twists" in the case of flops. It would be interesting to find the connections of this work with noncommutative resolutions considered by Buchweitz, Leuschke and Van den Bergh \cite{BLV}, Weyman and Zhao \cite{WZ}.

The projectivization formula turns out to be closely related to Toda's results on SODs from d-critical flips \cite{Tod3,Tod2}. In fact the relationship between $\PP(\sG)$ and $\PP(\sExt^1(\sG,\sO))$ is analytically locally a {\em d-critical flip}, and the techniques developed in \cite{Tod3,Tod2} could be applied. In turn, our results on projectivization formula and spherical twists could be applied to the case of symmetric powers of curves considered in \cite{Tod2}, see \S \ref{sec:SymC} and \S \ref{sec:Theta}.

Our projectivization formula Thm. \ref{thm:duality} compares nicely with Pirozhkolv's theorem \cite{Pir}. In fact, a special case of our Thm. \ref{thm:duality} (if $f=1$) agrees with a special case of Pirozhkolv's theorem (if $d=1$); These two results are in general complementary. (Both results fit into the general framework of Quot schemes of locally free quotients; see \cite{J20, J21}.) 

A Chow-theoretical version of the projectivization formula has been established in \cite{J19}.
%The application of projectivization formula to symmetric powers of curves is  independently discovered by Belmans--Krug \cite{BK19}, in which paper they also apply the blowup formula for Cohen-Macaulay subschemes in \S \ref{sec:CM2} to obtain semiorthogonal decomposition for the nested Hilbert scheme of points on a surface, which we include in \S\ref{sec:Hilb} for readers' convenience.

%Notation
\subsection{Conventions} We will use {\bf Grothendieck's notations}: for a coherent sheaf $\sF$ on a scheme $X$, denote by $\PP_X(\sF) = \PP(\sF): =  \Proj_X \Sym_{\sO_X}^\bullet \sF$ its projectivization, and by $C(\sF): = \Spec \Sym^\bullet \sF$ the abelian cone scheme. Notice that if $\sF$ is locally free, then its underlying vector bundle is $|\sF| = C(\sF^\vee)$, and the space $\PP_{\rm sub}(\sF) :=\PP(\sF^\vee)$ parametrizes one-dimensional sub-bundles of $\sF$.

Let $\kk$ be a fixed field throughout this paper. All schemes are assumed to be  noetherian, separated $\kk$-schemes of finite Krull dimensions. (Hence by a {\em regular} scheme, we always mean a connected, regular, noetherian, separated $\kk$-scheme of finite Krull dimension.) For a scheme $X$, $D(X) : = D^b(\coh X)$ denotes its  {\em bounded} derived category of coherent sheaves, and $D_{\rm qc}(X)$ denotes its unbounded derived category of complexes of $\sO_X$-modules with quasi-coherent cohomologies. Since we assume that $X$ is noetherian, by \cite[Cor. II. 2.2.2.1]{SGA6}, $D(X)$ is naturally identified with the subcategory $D^b_{\coh}(X) \subseteq D_{\rm qc}(X)$ of complexes with bounded, coherent cohomologies. 

Let $f \colon X \to Y$ be a proper, local complete intersection morphism (i.e. locally, $f$ factorizes as a composition of a regular closed immersion followed by a smooth morphism); This condition will be satisfied by all the morphisms considered in this paper. Then the derived pullback and derived pushforward are well-defined on the level of the bounded derived category of coherent sheaves: $\LL f^* \colon D(Y) \to D(X)$ and $\RR f_* \colon D(X) \to D(Y)$. In fact, $\LL f^*$ always preserves (pseudo)-coherence, and it is bounded since $f$ has finite Tor-dimension; $\RR f_*$ is bounded since $f$ is proper, it preserves coherence since $f$ is proper and pseudo-coherent, see \cite[Cor. III. 2.5]{SGA6}.  Furthermore, $\LL f^* \colon D(Y) \to D(X)$ admits a left adjoint $f_{!} \colon D(X) \to D(Y)$, and $\RR f_* \colon D(X) \to D(Y)$ admits a right adjoint $f^{!} \colon D(Y) \to D(X)$. If we denote $\omega_f^\bullet = f^!(\sO_Y) $ the relative dualizing complex of $f$, then $\omega_f^\bullet= \omega_f [\dim f]$ is a shift of line bundle, and the following holds: $f^!(A) = f^*(A) \otimes \omega_f^\bullet$, and $f_!(A) = f_*(A \otimes \omega_f^\bullet)$.

\subsection*{Acknowledgement} 
%J.Q. is grateful for Richard Thomas, Ed Segal, Zak Turcinovic and Francesca Carocci for many inspiring and useful discussions. %during his visit to Imperial College. 
%Both authors would like to thank Mikhail Kapranov for helpful discussions on flops and perverse schobers, and Andrei C{\u{a}}ld{\u{a}}raru for discussions. J.Q. is also grateful for J{\'a}nos Koll{\'a}r, Clemens Koppensteiner, Jakub Witaszek and Ziquan Zhuang for conversations on flops. And both authors are grateful for Matthew Young, Ying Xie, Dan Wang for helpful discussions and support. 
J.Q. is grateful for Richard Thomas, Ed Segal, Zak Turcinovic and Francesca Carocci for many inspiring discussions on derived categories, Cayley's trick and projectivization, for J{\'a}nos Koll{\'a}r, Clemens Koppensteiner, Jakub Witaszek and Ziquan Zhuang for discussions on flops. Both authors would like to thank Mikhail Kapranov, Andrei C{\u{a}}ld{\u{a}}raru, Matthew Young, Ying Xie, and Dan Wang for helpful discussions.  

We especially thank Mikhail Kapranov for bringing to our attention the problem regarding theta divisor and for many helpful discussions; thank Y. Toda for helpful discussions related to his work \cite{Tod2}, and thank Y. Kawamata, A. C{\u{a}}ld{\u{a}}raru, W. Donovan for their interests in this work and helpful conversations during a conference held in SYSU, Guangzhou in May 2019, for which conference the authors thank SYSU and Changzheng Li for hospitality. 

We also thank the referees for their careful reading, many useful suggestions which significantly improve the exposition of the paper.

J.Q. is supported by the Engineering and Physical Sciences Research Council [EP/R034826/1]; L.N.C. is supported by a grant from the Research Grants Council of the Hong Kong Special Administrative Region, China (Project No. CUHK --14301117). During the time the paper was written, J.Q. was also supported by a grant from National Science Foundation (Grant No. DMS -- 1638352) and the Shiing-Shen Chern Membership Fund of IAS.

% section: Preparation
\section{Preparation: Cayley's trick and degeneracy loci}

\subsection{Semiorthogonal decompositions} We refer the readers to \cite{Huy, Cal, Kuz14sod} for basic notations and properties of derived categories. 

A {\em semiorthogonal decomposition} of a triangulated category $\shT$, denoted by
	$$\shT = \langle \shA_1, \shA_2, \ldots, \shA_n \rangle,$$
 is a sequence of full triangulated subcategories $\shA_1, \shA_2, \ldots, \shA_{n}$ of $\shT$, such that 
 	\begin{enumerate}
		\item $\Hom (A_j ,A_i) = 0$ for all $A_i \in \shA_i$ and $A_j \in \shA_j$ , if $j > i$;
		\item For any $T \in \shT$, there is a sequence of maps $0 = T_n \to T_{n-1} \to \ldots \to T_1 \to T_0 =T$, such that $\cone (T_{i} \to T_{i-1}) \in \shA_i$.
	\end{enumerate}
The subcategories $\shA_i$'s are called {\em components} of $\shT$.

An important example is provided by Orlov's formula for projective bundles:

\begin{theorem}[Projective bundle formula, Orlov \cite{Orlov05}] \label{thm:proj_bundle}
Let $X$ be a regular scheme, and let $\sE$ be a vector bundle of rank $r$ on $X$. Denote $\pi: \PP(\sE) \to X$ the natural projection. Then the functors $\LL \pi^*(-) \otimes \sO_{\PP(\sE)}(k) : D(X) \to D(\PP(\sE))$ are fully faithful, $k = 0, 1, \ldots, r-1$, and their image give rises to a semiorthogonal decomposition
	$$D(\PP(\sE)) = \langle \pi^* D(X), \pi^* D(X) \otimes \sO_{\PP(\sE)}(1), \ldots, \pi^* D(X) \otimes \sO_{\PP(\sE)}(r-1)\rangle.$$ 
\end{theorem}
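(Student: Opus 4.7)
The plan is to prove the theorem in three steps: a cohomological vanishing for the relative line bundles, the deduction of fully faithfulness and semiorthogonality via the projection formula, and finally generation via the relative Beilinson--Koszul resolution of the diagonal.

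\textbf{Step 1 (relative Bott vanishing).} I would first compute the relevant direct images. Locally on $X$ the projective bundle $\pi \colon \PP(\sE) \to X$ trivialises to $\PP^{r-1} \times U \to U$, so by flat base change the calculation of $\RR \pi_* \sO_{\PP(\sE)}(m)$ reduces to the fibrewise cohomology $\H^{\bullet}(\PP^{r-1}, \sO(m))$. The outputs that matter are
\[
\RR \pi_* \sO_{\PP(\sE)} = \sO_X, \qquad \RR\pi_* \sO_{\PP(\sE)}(m) = 0 \quad \text{for every } -r < m < 0.
\]

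\textbf{Step 2 (fully faithfulness and semiorthogonality).} For $A, B \in D(X)$ and integers $0 \le i, j \le r-1$, combining $\LL\pi^*$--$\RR\pi_*$ adjunction with the projection formula yields
\[
\RR\Hom_{\PP(\sE)}\bigl(\LL\pi^*A \otimes \sO(j),\, \LL\pi^*B \otimes \sO(i)\bigr) \simeq \RR\Hom_X\bigl(A,\, B \otimes^{\LL} \RR\pi_* \sO(i-j)\bigr).
\]
Taking $i = j$ and applying Step 1 gives $\RR\Hom_X(A,B)$, which is fully faithfulness of each $\LL\pi^*(-) \otimes \sO(k)$. Taking $j > i$, so that $-r < i-j \le -1$, the right-hand side vanishes by Step 1, which is exactly the desired semiorthogonality.

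\textbf{Step 3 (generation).} I would use the relative Beilinson--Koszul resolution of the diagonal $\Delta \subset \PP(\sE) \times_X \PP(\sE)$. The universal short exact sequence $0 \to \sT \to \pi^*\sE \to \sO(1) \to 0$ on $\PP(\sE)$, with $\sT$ the tautological kernel of rank $r-1$, yields on the fibre product a Koszul resolution of $\sO_\Delta$ whose terms have the form $p_1^*\sO(-k) \otimes p_2^*\sO(k) \otimes \pi^*(\wedge^k \sT^\vee)$ (up to the appropriate identification) for $k = 0, \ldots, r-1$. Writing any $F \in D(\PP(\sE))$ as $F = \RR p_{1*}(\LL p_2^* F \otimes \sO_\Delta)$ and substituting this resolution exhibits $F$ as an iterated extension of objects of the form $\LL\pi^*(\blank) \otimes \sO(k)$ with $k \in \{0, \ldots, r-1\}$, giving generation.

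\textbf{Main obstacle.} The substantive work sits in Step 3: constructing the Koszul resolution of the diagonal as an honest finite locally free resolution, verifying its exactness, and tracking twists to confirm that only the allowed powers $\sO(k)$ with $0 \le k \le r-1$ appear. A cleaner alternative avoiding an explicit kernel is to show that the right orthogonal of the listed subcategories vanishes: any $F$ in this orthogonal satisfies $\RR\pi_*(F \otimes \sO(-k)) = 0$ for each $k = 0, \ldots, r-1$, whence $F = 0$ by local analysis on fibres using Beilinson's decomposition \eqref{sod:beilinson} for $\PP^{r-1}$. The Koszul route is nevertheless preferable since it produces an explicit Fourier--Mukai kernel, which is the natural input for the chess-game computations used later in the paper.
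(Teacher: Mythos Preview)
The paper does not prove this theorem: it is stated as a cited result of Orlov \cite{Orlov05} (originally \cite{Orlov92}) and used as a black box throughout. So there is no ``paper's own proof'' to compare against.

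Your argument is the standard one and is correct in outline. Steps~1 and~2 are clean. In Step~3 your description of the resolution terms is slightly off: the relative Beilinson resolution of $\sO_\Delta$ on $\PP(\sE)\times_X\PP(\sE)$ has terms of the form $p_1^*\sO(-k)\otimes p_2^*\Omega^k_{\PP(\sE)/X}(k)$ for $k=0,\ldots,r-1$, not $p_1^*\sO(-k)\otimes p_2^*\sO(k)\otimes\pi^*(\wedge^k\sT^\vee)$; the sheaves $\Omega^k_{\PP(\sE)/X}(k)$ are not pulled back from $X$. This does not affect the conclusion, since what matters for generation is only that the $p_1$-twists lie in the range $\{0,\ldots,r-1\}$, which they do. Your alternative route via vanishing of the right orthogonal is also valid and arguably simpler here. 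The remark about explicit kernels being needed for the chess-game computations is not quite accurate: the paper uses Orlov's result only as an input semiorthogonal decomposition, not its Fourier--Mukai realisation.
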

If $X = \Spec \kk$, this recovers the {\em Beilinson's decomposition} \cite{Bei} for projective spaces.

\subsection{Mutations and spherical twist} 
% Mutations
\subsubsection{Mutations.} \label{sec:mutations} A full triangulated subcategory $\shA$ of a triangulated category $\shT$ is called  {\em left (resp. right) admissible} if the inclusion functor $i = i_{\shA}: \shA\to \shT$ admits a left adjoint $i^*: \shT \to \shA$ (resp. a right adjoint $i^!: \shT \to \shA$); $\shA \subset \shT$ is called {\em admissible} if it is both left and right admissible. If $\shA\subset \shT$ is admissible and $\shT$ is saturated in the sense of \cite{B}, then $\shA^\perp = \{ T \in \shT \mid \Hom(\shA,T) = 0\}$ and ${}^\perp \shA =\{ T \in \shT \mid \Hom(T, \shA) = 0\}$ are both admissible, and there are semiorthogonal decompositions $\shT = \langle \shA^\perp, \shA \rangle =  \langle \shA, {}^\perp \shA\rangle$. Let $\shA \subset \shT$ be an admissible subcategory. Then the functor $\bL_\shA: = i_{\shA^\perp} i^*_{\shA^{\perp}}$  (resp. $\bR_{\shA} : =  i_{{}^\perp \shA} i^!_{ {}^\perp \shA}$) is called the {\em left (resp. right) mutation through $\shA$}.  For any $b \in \shT$, there are exact triangles
		$$ i_\shA i^!_{\shA} (b) \to b \to \bL_{\shA} b \xrightarrow{[1]}{},\qquad  \bR_{\shA} b \to b \to  i_\shA i^*_{\shA} (b) \xrightarrow{[1]}{}.$$
Furthermore, $(\bL_{\shA})\,|_{\shA} = 0$ and $(\bR_{\shA})\,|_{\shA} = 0$; $(\bL_{\shA})\,|_{{}^\perp \shA} : {}^\perp \shA \to \shA^\perp$ and $(\bR_{\shA})\,|_{\shA^\perp } : \shA^\perp \to {}^\perp \shA $ are mutually inverse equivalences of categories. Staring with a semiorthogonal decomposition $\shT = \langle \shA_1, \ldots, \shA_{k-1}, \shA_k, \shA_{k+1}, \ldots, \shA_n \rangle$ with admissible components, for $k \in [1,n]$, the left and right mutations produce semiorthogonal decompositions:
		\begin{align*}
		\shT&  = \langle \shA_1, \ldots,\shA_{k-2}, \bL_{\shA_{k-1}} (\shA_k), \shA_{k-1}, \shA_{k+1}, \ldots, \shA_n \rangle\\
		& =  \langle \shA_1, \ldots, \shA_{k-1}, \shA_{k+1}, \bR_{\shA_{k+1}} (\shA_k), \shA_{k+2}, \ldots, \shA_n \rangle.
		\end{align*}
We refer the readers to \cite{B, BK, Kuz07HPD} for more details about mutations.

% Spherical
\subsubsection{Spherical twists.} 

Spherical twists introduced by Paul Seidel and Richard Thomas \cite{ST01}, is a very important type of autoequivalences of derived categories of algebraic varieties. The concept was later generalized by Anno \cite{Ann}, Anno--Logvinenko \cite{AL16}  and Rouquier \cite{Rou} to spherical twists around spherical functors, by Huybrechts--Thomas to $\PP$-twist around $\PP$-object \cite{HT}. See \cite{Add} for a nice introduction, and \cite{Tod, Hor, AL16, Kuz15, Seg17, Mea} for various aspects.
  
In this paper, we only consider categories, and the functors between categories, in the $2$-category of Fourier-Mukai kernels of \cite{CW}. Hence it makes sense to talk about taking cones of functors. Suppose $F: \shA \to \shB$ is an exact functor between two triangulated categories (in the $2$-category of Fourier-Mukai kernels), and assume that $F$ admits a left adjoint $L$ and a right adjoint  $R$.
	%	$$L \dashv F \dashv R.$$
Then the {\em twist} functor $T = T_F $, {\em cotwist} functor $C$, {\em dual twist} functor $T'$ and {\em dual cotwist} functor $C' $ are defined respectively via the following exact triangles:
		\begin{align*}
		&FR \to 1_\shB \to T \xrightarrow{[1]} FR[1], 
		& C\to  1_\shA \to RF \xrightarrow{[1]} C[1], \\
		& T'\to  1_\shB \to FL \xrightarrow{[1]} T'[1],
		& LF \to 1_\shA \to C' \xrightarrow{[1]} LF[1].
		\end{align*}
The mutation functors $\bL_{\shA}$ and $\bR_{\shA}$, defined in \S \ref{sec:mutations}, are special cases of the above-defined functors: $\bL_{\shA}$ (resp. $\bR_{\shA}$) is the twist functor (resp. the dual twist functor) of the inclusion functor $i_{\shA}: \shA \hookrightarrow \shT$. This observation will play an important role in the proof of Thm. \ref{thm:flop}. 

\begin{definition}[Anno \cite{Ann}, Rouqier \cite{Rou}, Anno--Logvinenko \cite{AL16}] \label{def:spherical} $F: \shA \to \shB$ is called {\em spherical}, if the cotwist functor $C$ of $F$ is an autoequivalence of $\shA$, and $R \simeq CL[1]$ holds.
\end{definition}

\begin{theorem}[Anno \cite{Ann}, Rouqier \cite{Rou},  Anno--Logvinenko \cite{AL16}] If $F: \shA \to \shB$ is a spherical functor, then the twist functor $T_F$ of $F$ is an autoequivalence of $\shB$.
\end{theorem}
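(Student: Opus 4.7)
The plan is to construct an explicit quasi-inverse to $T_F$ built from the dual twist $T'_F$ and to verify the inverse property by combining octahedra. Throughout I work within the ambient enhancement (a dg-category, stable $\infty$-category, or the 2-category of Fourier--Mukai kernels of \cite{CW}) tacitly fixed above, so that cones of natural transformations are functorial and octahedra can be composed coherently. I write $\eta\colon 1_\shA\to RF$ and $\varepsilon\colon FR\to 1_\shB$ for the unit and counit of $F\dashv R$, and similarly for $L\dashv F$.

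First I would reformulate the spherical conditions in a self-dual fashion. From the two hypotheses that $C$ is an autoequivalence of $\shA$ and $R\simeq CL[1]$, one deduces by applying the adjunction $L\dashv F$ to the triangle $C\to 1_\shA\to RF\to C[1]$ that $C'$ is also an autoequivalence of $\shA$ and dually $L\simeq C^{-1}R[-1]$. This formal step is indispensable because the argument for $T'_F\,T_F\simeq 1_\shB$ is obtained from the argument for $T_F\,T'_F\simeq 1_\shB$ by interchanging $(R,L,C)$ with $(L,R,C^{-1})$.

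The central computation is the natural isomorphism
\[
T_F\circ F\;\simeq\;F\,C\,[2],
\]
obtained from the octahedral axiom applied to the factorization $F\xrightarrow{F\eta}FRF\xrightarrow{\varepsilon F}F$ of the identity (triangle identity for $F\dashv R$). The cone of $F\eta$ is $FC[1]$ by the triangle $C\to 1_\shA\to RF$ post-composed with $F$; the cone of $\varepsilon F$ is $T_F\,F$ by the defining triangle of $T_F$ pre-composed with $F$; and the cone of the composition $1_F$ is zero. The octahedron yields $T_F\,F\simeq FC[2]$. With this in hand, applying $T_F$ to the triangle $T'_F\to 1_\shB\to FL$ produces a triangle $T_F\,T'_F\to T_F\to T_F(FL)$; substituting $T_F(FL)\simeq FCL[2]\simeq FR[1]$, via the spherical identification $CL\simeq R[-1]$, converts this into $T_F\,T'_F\to T_F\to FR[1]$, which is precisely the rotation of the defining triangle $FR\to 1_\shB\to T_F$. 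Hence $T_F\,T'_F\simeq 1_\shB$, and $T'_F\,T_F\simeq 1_\shB$ follows by the symmetric argument from the first step.

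The main obstacle is not the existence of these isomorphisms at the level of objects --- that is almost immediate from the triangles --- but the verification that they arise from \emph{canonical natural transformations} induced by the units and counits of $L\dashv F\dashv R$ together with the spherical isomorphism $R\simeq CL[1]$. In a bare triangulated category, none of the cones above would be functorial, octahedra could not be combined coherently, and the identification of the resulting endo-isomorphism with $1_\shB$ (rather than some non-canonical self-equivalence) would fail. Tracking the natural transformations honestly rather than only their underlying objects is the entire technical content of the theorem and of its original proofs by Anno \cite{Ann}, Rouquier \cite{Rou}, and Anno--Logvinenko \cite{AL16}.
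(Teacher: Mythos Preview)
The paper does not give its own proof of this statement: it is quoted as a background result, with attribution to Anno, Rouquier, and Anno--Logvinenko, and the subsequent remark only records that the weaker hypothesis (existence of \emph{some} isomorphism $R\simeq CL[1]$) suffices, again by reference to \cite{Add, Mea}. There is therefore nothing in the paper to compare your argument against.

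That said, your sketch is essentially the standard proof as it appears in \cite{Add} and \cite{Mea}: the key identity $T_F\,F\simeq FC[2]$ via the octahedron on the triangle identity $\varepsilon F\circ F\eta=1_F$, followed by applying $T_F$ to the defining triangle of $T'_F$ and invoking $CL\simeq R[-1]$, is exactly how those references proceed. Your closing paragraph correctly isolates the genuine difficulty --- that the isomorphism $T_F T'_F\simeq 1_\shB$ must be shown to be induced by the canonical units and counits, not merely that the cones agree as objects --- and correctly attributes the resolution to the enhanced setting. One small inaccuracy: the deduction that $C'$ is an autoequivalence from the two given hypotheses is not literally obtained ``by applying the adjunction $L\dashv F$ to the triangle $C\to 1_\shA\to RF$''; rather it follows from the ``two out of four'' type arguments in \cite{AL16, Mea}, which require a bit more than a single adjunction step. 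This does not affect the validity of the overall strategy.
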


Here, we follow \cite{Add, Mea}, and only require the existence of an isomorphism $R \simeq CL[1]$. Then it was shown in above references that the above theorem holds under this weaker condition, and furthermore the natural composition $R \to RFL \to CL[1]$ is an isomorphism. %, as required in \cite{Ann, AL16, Rou}.

% Sec: Cayley's trick
\subsection{Cayley's trick and Orlov's results} \label{sec:semi-local}
Cayley's trick is a method to relate the geometry of the zero scheme of a regular section of a vector bundle to the geometry of a certain hypersurface. The exposition here mostly follows Thomas \cite{RT15HPD} and Orlov \cite{Orlov05}.

Let $\sE$ be a locally free sheaf of rank $r$ on a regular scheme
$X$, and $s \in H^0(X,\sE)$ a regular section%
%\footnote{ i.e. the defining functions of $s$ form a regular sequence of length $r$, or equivalently the Koszul complex defined by contracting the section $s$ is exact. Since $X$ is regular (hence Cohen-Macaulay), this is equivalent to $Z=Z(s)$ has the expected dimension, i.e. $Z$ is of pure dimension $\dim X - r$.}
%
, and denote $Z:=Z(s)$ the zero locus of the section $s$. Consider the projectivization $\PP(\sE)$ with natural projection $q: \PP(\sE) \to X$. Since $H^0(X,\sE) \simeq H^0(\PP(\sE), \sO_{\PP(\sE)}(1))$, the section $s$ corresponds to a section $f_s$ of $\sO_{\PP(\sE)}(1)$ on $\PP(\sE)$, hence canonically defines a divisor $\shH_s : = Z(f_s) \subset \PP(\sE)$, which comes with projection $\pi: \shH_s \to X$. The general fiber of this projection is a projective space $\PP^{r-2}$, and the fiber dimension of $\pi$ jumps exactly over $Z$: over $x \in X \,\backslash Z$, $\pi^{-1}(x) \subset \PP(\sE \otimes k(x)) \simeq \PP^{r-1}$ is a hyperplane cut out by $\tilde{s}$, while over $z \in Z$, the fiber $\pi^{-1}(z)$ is the whole fiber $\PP^{r-1}\simeq \PP(\sE \otimes k(z))$ of $\PP(\sE)$. If we denote $i: Z \hookrightarrow X$ the inclusion, then its normal sheaf is $\sN_{i} \simeq \sE|_Z$, and it is clear that 
$\pi^{-1}(Z) = \PP(\sN_i)$. The situation is illustrated in a commutative diagram:
\begin{equation}\label{diagram:Cayley}
	\begin{tikzcd}[row sep= 2.6 em, column sep = 2.6 em]
	\PP(\sN_i) \ar{d}[swap]{p} \ar[hook]{r}{j} & \shH_s \ar{d}{\pi} \ar[hook]{r}{\iota} & \PP(\sE) \ar{ld}[near start]{q} 
	\\
	Z \ar[hook]{r}{i}         & X  
	\end{tikzcd}	
\end{equation}

The geometry in above diagram enables one to reduce a problem of a complete intersection subscheme $Z\subset X$ to a problem of the hypersurface $\shH_s \subset \PP(\sE)$, called the {\em (generalized) universal hyperplane section for $Z=Z(s)$}. This is called {\em Cayley's trick}.

\medskip \noindent\textit{Important observation.} If we define the coherent sheaf $\sG$ to be the cokernel of $s$, i.e 
	\begin{equation*} \label{eqn:G}
	 \sO_X \xrightarrow{~s~} \sE \to \sG = {\rm coker}(s) \to 0,
	\end{equation*}
then $\PP(\sG) = \shH_s \subset \PP(\sE)$, with the inclusion exactly induced by the surjection $\sE \twoheadrightarrow \sG$.

\begin{remark} \label{rmk:N_j} Consider the Euler sequence for $\PP(\sE)$:
	$$0 \to \Omega_q(1) \to q^* \sE \to \sO_{\PP(\sE)}(1) \to 0.$$
The inclusion $\PP(\sN_i) \subset \PP(\sE)$ is cut out by the pullback section $q^* s \in H^0(\PP(\sE), q^* \sE)$, however $\shH_s \subset \PP(\sE)$ is cut out by the image of $q^*$ under the canonical surjection $q^*\sE \twoheadrightarrow\sO_{\PP(\sE)}(1)$. Therefore when restricted to $\shH_s$, the section $q^*s$ lifts canonically to a regular section $\tilde{s} \in H^0(\shH_s, \Omega_q(1)|_{\shH_s})$, and the inclusion $j: \PP(\sN_i) \subset \shH_s$ is cut out by $\tilde{s}$. Hence $\sN_j = \Omega_q(1)|_{\PP(\sN_i)} = \Omega_p (1)$, and the Euler sequence for $\PP(\sN_i)$ is equivalent to:
	$$0 \to \sN_j \to p^* \sN_i \to \sO_{\PP(\sN_i)}(1) \to 0.$$
In particular $\det \sN_j = p^* \det \sN_i \otimes \sO_{\PP(\sN_i)}(-1) =  (\pi^* \det \sE \otimes \sO_{\PP(\sE)}(-1))|_{\PP(\sN_i)}$.
\end{remark}

Cayley's trick can be categorified to obtain relations between derived categories $D(Z)$ and $D(\shH_s)$. This is Orlov's formula:

\begin{theorem}[Orlov, {\cite[Prop. 2.10]{Orlov05}}] \label{thm:HPDI} In the above situation, the functors $\RR j_*\,p^*: D(Z) \to D(\shH_s)$ and $\LL \pi^*(-) \otimes \sO_{\shH_s}(k) : D(X) \to D(\shH_s)$ are fully faithful, where $k =1, \ldots, r-1$, $\sO_{\shH_s}(k) :=  \sO_{\PP(\sE)}(k)|_{\shH_s}$, and there is a semiorthogonal decomposition for $\shH_s$:
	$$D(\shH_{s}) = \langle \RR j_* \, p^* D(Z), \LL \pi^* D(X) \otimes \sO_{\shH_s}(1), \ldots , \LL \pi^* D(X) \otimes  \sO_{\shH_s}(r-1) \rangle.$$
\end{theorem}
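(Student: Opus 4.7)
The plan is to verify the three requirements for the semiorthogonal decomposition: admissibility and mutual semi-orthogonality of the components, full faithfulness of the embedding functors, and generation. The $r-1$ right-hand components are handled uniformly from the Koszul short exact sequence $0 \to \sO_{\PP(\sE)}(-1) \to \sO_{\PP(\sE)} \to \iota_*\sO_{\shH_s} \to 0$ on $\PP(\sE)$: pushing it down by $q$ and applying Orlov's projective bundle formula (Theorem~\ref{thm:proj_bundle}) yields $\RR\pi_*\sO_{\shH_s}(l) = 0$ for $l \in \{-(r-2),\ldots,-1\}$ and $\RR\pi_*\sO_{\shH_s} = \sO_X$. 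Projection formula then gives both the full faithfulness of each $\LL\pi^*(-)\otimes \sO(k)$ and their mutual semi-orthogonality for $k \in \{1,\ldots,r-1\}$. For the semi-orthogonality with $\RR j_* p^* D(Z)$, I would apply the adjunction $\LL j^* \dashv \RR j_*$ together with the commutativity $\pi\circ j = i\circ p$ to reduce $\RR\Hom(\LL\pi^* B\otimes \sO(k), \RR j_* p^* A)$ to $\RR\Hom_Z(\LL i^* B, A\otimes \RR p_*\sO(-k))$, which vanishes for $k \in \{1,\ldots,r-1\}$ by the projective bundle formula for $p$ (as $\sN_i$ has rank $r$).

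For full faithfulness of $\RR j_* p^*$, the key input is the Koszul resolution of $\sO_{\PP(\sN_i)}$ on $\shH_s$ arising from the regular section $\tilde{s}$ of $\Omega_q(1)|_{\shH_s}$ of Remark~\ref{rmk:N_j}: it gives the cohomology sheaves $H^{-k}(\LL j^*\RR j_* p^* A) = p^* A \otimes \wedge^k\sN_j^\vee$ for $k = 0,\ldots,r-1$. By adjunction, $\RR\Hom_{\shH_s}(\RR j_* p^* A, \RR j_* p^* B) = \RR\Hom_{\PP(\sN_i)}(\LL j^*\RR j_* p^* A, p^* B)$, which is computed by a standard spectral sequence with $E_2^{i,k} = \Ext^i_{\PP(\sN_i)}(p^* A \otimes \wedge^k\sN_j^\vee, p^* B)$. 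Using $\sN_j = \Omega_p(1)$ and hence $\wedge^k\sN_j = \Omega_p^k(k)$, together with Bott vanishing $\RR p_*\Omega_p^k(k) = 0$ for $1 \le k \le r-1$ and $\RR p_*\sO = \sO_Z$, every row outside $k = 0$ vanishes on $E_2$; the spectral sequence degenerates for trivial reasons, and the surviving row yields $\RR\Hom_Z(A, B)$, as required.

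The main obstacle is generation. My approach is to start from Orlov's projective bundle SOD on $\PP(\sE)$, $D(\PP(\sE)) = \langle q^* D(X), q^* D(X)\otimes \sO(1),\ldots, q^* D(X)\otimes \sO(r-1)\rangle$, and transport it to $\shH_s$ via $\LL\iota^*$. The components at twists $k \in \{1,\ldots,r-1\}$ pull back exactly to the right-side $\LL\pi^* D(X)\otimes \sO(k)$ of the claimed SOD. The remaining $k = 0$ component pulls back to $\LL\pi^* D(X) \subset D(\shH_s)$, which lies outside the claimed SOD; its extra contribution must be mutated through the other $r-1$ components and the residual identified with $\RR j_* p^* D(Z)$. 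Geometrically this mutation is governed by the same Koszul resolution of $\sO_{\PP(\sN_i)}$ inside $\shH_s$ and reflects the fiber-dimension jump of $\pi$ from $\PP^{r-2}$ off $Z$ to $\PP^{r-1}$ over $Z$; the mutation cone can be computed stratum by stratum through the Koszul complex. Making this identification rigorous is the main technical challenge, essentially the heart of Orlov's original argument in \cite{Orlov05}, and admits a clean packaging via the chess-game formalism reviewed in Appendix~\ref{sec:app}.
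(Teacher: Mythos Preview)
The paper does not give its own proof of this statement: Theorem~\ref{thm:HPDI} is quoted directly from Orlov \cite[Prop.~2.10]{Orlov05} and is used as an input throughout (notably to verify condition~(\textbf{A-2}) in the proof of Theorem~\ref{thm:duality}). So there is no in-paper proof to compare your proposal against.

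Your sketch is essentially the standard argument, and the full faithfulness and semi-orthogonality computations you outline are correct. One caveat on the generation step: you suggest it ``admits a clean packaging via the chess-game formalism reviewed in Appendix~\ref{sec:app}.'' In this paper that would be circular, since the application of Appendix~\ref{sec:app} (in the proof of Theorem~\ref{thm:duality}) verifies condition~(\textbf{A-2}) precisely by invoking Theorem~\ref{thm:HPDI} and Remark~\ref{rmk:sod-H-neg}. If you want a self-contained argument for generation you should instead follow Orlov's original route, or argue directly: every object of $D(\shH_s)$ lies in the image of $\LL\iota^*$ (since $\iota$ is a divisorial inclusion, $\LL\iota^*\RR\iota_*\to\id$ has cone $\otimes\sO_{\shH_s}(1)[1]$), so it suffices to show $\LL\iota^*(q^*D(X)\otimes\sO(k))$ for $0\le k\le r-1$ lies in the right-hand side. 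For $k\ge 1$ this is immediate; for $k=0$ one checks that the left mutation of $\LL\pi^*D(X)$ through $\langle D(X)(1),\ldots,D(X)(r-1)\rangle$ lands in $\RR j_*p^*D(Z)$, which is exactly the Koszul/mutation computation you already identified as the crux.
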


If we assume the rank $r$ locally free sheaf $\sE$ over $X$ is globally generated, then Orlov's result fits into the picture of {\em homological projective duality (HPD)} of Kuznetsov \cite{Kuz07HPD}. In the case $\sE = L^* \otimes \sO_X(1)$, where $L$ is a vector space, Thm. \ref{thm:HPDI} corresponds to what Thomas called HPD I, see \cite[Prop. 3.6]{RT15HPD}.

% Singularity category
\begin{remark} We don't assume $\shH_s$ or $Z$ to be smooth. In particular, above theorem shows the functor $\RR j_* \, p^*$ induce an equivalence of triangulated categories:
	$$D_{\mathrm{sg}}(Z) \simeq D_{\mathrm{sg}}(\shH_s),$$
see {\cite[Thm. 2.1]{Orlov05}}, where $D_{\mathrm{sg}}(X): = D(X) / {\rm Perf}(X)$ denotes the triangulated category of singularities of a scheme $X$, introduced by Orlov in \cite{Orlov04}.
\end{remark}

% sec: Blowing up and Cayley's trick
\subsection{Blowing up, and relation with Cayley's trick}
Suppose $Z$ is a codimension $r \ge 2$ local complete intersection subscheme of a scheme $X$, then the {\em blowing up of $X$ along $Z$} is $\pi: \Bl_Z X : = \PP(\sI_Z) \to X$, where $\sI_Z$ is the ideal sheaf of $Z$ inside $X$. The {\em exceptional divisor} is $i_E : E := \Bl_Z X \times_X Z \hookrightarrow \Bl_Z X$. Since $\sI_Z|_Z = \sN_{Z/X}^\vee$, therefore $E = \PP(\sN_{Z/X}^\vee)$.

\begin{theorem}[Blowing up formula, Orlov \cite{Orlov05}] \label{thm:blow-up}Suppose $Z$ is a codimension $r \ge 2$ smooth subvariety of a smooth projective variety $X$. Let $\Bl_Z X$ be the blowing up of $X$ along $Z$, and denote $i_{E}: E \hookrightarrow \Bl_Z X$ the inclusion of the exceptional locus. We have a Cartesian diagram
	\begin{equation*}
	\begin{tikzcd}[row sep= 2.6 em, column sep = 2.6 em]
	E \ar{d}[swap]{p} \ar[hook]{r}{i_E} & \Bl_Z X \ar{d}{\pi} \\
	Z \ar[hook]{r}{i_Z}         & X 
	\end{tikzcd}	
	\end{equation*}
Then $\LL \pi^*: D(X) \to D(\Bl_Z X)$ and $ \RR i_{E*} \, \LL p^* (-) \otimes \sO(-kE) : D(Z) \to D(\Bl_Z X)$ are fully faithful, $k \in \ZZ$. Denote the image of the latter to be $D(Z)_k$, then
	\begin{align*} D(\Bl_Z X)	& = \langle \LL \pi^* D(X), ~ D(Z)_0, D(Z)_1, \ldots, D(Z)_{r-2} \rangle; \\
				& = \langle D(Z)_{1-r}, \ldots, D(Z)_{-2}, D(Z)_{-1}, ~  \LL \pi^* D(X) \rangle. 			
	\end{align*}
\end{theorem}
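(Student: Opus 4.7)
The plan is to reduce the theorem to the projective bundle formula (Thm.~\ref{thm:proj_bundle}) on the exceptional divisor $E=\PP(\sN_{Z/X}^\vee)$, combined with standard adjunction arguments along $\pi:\Bl_Z X\to X$. The key geometric input is the self-intersection formula $\sO_E(-E)\cong \sO_{E/Z}(1)$ (the relative hyperplane bundle of $p:E\to Z$), which holds because $Z\subset X$ is a regular embedding; this is precisely what allows the projective bundle decomposition of $D(E)$ to propagate to a statement about $D(\Bl_Z X)$.

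First I would verify that $\LL\pi^*$ is fully faithful. Since $\pi$ is a blowup of a smooth subvariety in a smooth variety, it is a rational resolution, so $\RR\pi_*\sO_{\Bl_Z X}=\sO_X$, and the projection formula yields
$$\RR\Hom_{\Bl_Z X}(\LL\pi^*A,\LL\pi^*B)=\RR\Hom_X(A,B\otimes^{\LL}\RR\pi_*\sO_{\Bl_Z X})=\RR\Hom_X(A,B)$$
for all $A,B\in D(X)$. Next, for fully faithfulness of $\Psi_k:=\RR i_{E*}\,\LL p^*(-)\otimes\sO(-kE)$, I would use the adjunction $\LL i_E^*\dashv \RR i_{E*}$ to reduce to computing $\LL i_E^*\RR i_{E*}\LL p^*G$. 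The ideal triangle $\sO(-E)\to\sO\to\sO_E$ together with the self-intersection formula presents this as an extension of $\LL p^*G$ by $\LL p^*G\otimes\sO_{E/Z}(-1)[1]$, and pushing forward via $\RR p_*$, using $\RR p_*\sO_{E/Z}=\sO_Z$ and $\RR p_*\sO_{E/Z}(-1)=0$ (both from Thm.~\ref{thm:proj_bundle}), yields $\RR\Hom_{\Bl_Z X}(\Psi_k F,\Psi_k G)=\RR\Hom_Z(F,G)$.

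For semiorthogonality of $\langle\LL\pi^*D(X),D(Z)_0,\ldots,D(Z)_{r-2}\rangle$, analogous adjunction computations reduce all the relevant vanishings of $\RR\Hom(\Psi_kF,\LL\pi^*A)$ and $\RR\Hom(\Psi_jG,\Psi_k F)$ (for $j>k$ in the stated range) to the vanishing $\RR p_*\sO_{E/Z}(m)=0$ for $-(r-1)<m<0$, which is again Thm.~\ref{thm:proj_bundle}. For generation, any object $\sF\in D(\Bl_Z X)$ can be assembled from its restrictions $\sF(-nE)|_E$ via iterating the triangles $\sF(-nE)\to\sF(-(n-1)E)\to \sF(-(n-1)E)|_E$, so that after finitely many steps the remaining piece lies in $\LL\pi^*D(X)$; it then suffices to show that objects of $D(E)$, after $\RR i_{E*}$, are generated by the stated $D(Z)_k$. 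This is exactly the projective bundle formula on $E\to Z$, via the identification $\sO(-kE)|_E=\sO_{E/Z}(k)$.

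The main obstacle is the careful bookkeeping of how the twists $\sO(-kE)$ interact with $\LL i_E^*\RR i_{E*}$: the relation $\sO_E(E)=\sO_{E/Z}(-1)$ couples cohomological degrees with the powers of $\sO(-kE)$, so the admissible range $k\in\{0,\ldots,r-2\}$, and dually $\{1-r,\ldots,-1\}$, is pinned down precisely by the vanishing range of the fibrewise $\RR p_*\sO_{E/Z}(m)$. Once the first decomposition is established, the second is obtained either by running the parallel argument with opposite twists and right adjoints, or via mutations using the Serre functor together with the canonical bundle formula $\omega_{\Bl_Z X}=\pi^*\omega_X\otimes\sO((r-1)E)$.
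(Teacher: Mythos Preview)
The paper does not prove this theorem: it is stated as a result of Orlov with a citation to \cite{Orlov05}, and no argument is given beyond the remark immediately following the statement that the second decomposition follows from the first by Serre duality via $\omega_{\Bl_Z X}=\pi^*\omega_X\otimes\sO((r-1)E)$. So there is no in-paper proof to compare against; your outline is essentially the standard proof (Orlov's own), and the paper's one-line remark about Serre duality matches the final option you mention for deducing the second decomposition.

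Two small points worth tightening. First, in your fully-faithfulness step you apply the adjunction $\LL i_E^*\dashv \RR i_{E*}$ to the \emph{second} argument, but that adjunction moves $\LL i_E^*$ to the \emph{first} argument; with $\sO_E(-E)\cong\sO_{E/Z}(1)$ the triangle for $\LL i_E^*\RR i_{E*}(\LL p^*G)$ involves a twist by $\sO_{E/Z}(1)$, not $\sO_{E/Z}(-1)$. The computation still lands on the vanishing $\RR p_*\sO_{E/Z}(-1)=0$ once you pair against $\LL p^*F$, so the conclusion is unchanged, but the bookkeeping as written is slightly off. Second, your generation sketch (``after finitely many steps the remaining piece lies in $\LL\pi^*D(X)$'') is not quite a proof: twisting by $\sO(-E)$ and restricting does not by itself force the process to terminate in $\LL\pi^*D(X)$. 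The clean route is to show the right orthogonal of the proposed collection is zero, which again reduces to the projective-bundle vanishings on $E$ together with $\RR\pi_*\sO_{\Bl_Z X}=\sO_X$.
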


\noindent Note $\sO(k)|_E = \sO_{\PP(\sN_{Z/X}^\vee)}(k)$, and the dualizing sheaf of $\Bl_Z X$ is $\omega = \pi^* \omega_X \otimes \sO_{\Bl_Z X}((r-1)E)$, therefore $D(Z)_k \otimes \omega = D(Z)_{k+1-r}$, so one can obtain one from the other in the above two decompositions from Serre duality.

\subsubsection{Mutations for blowing ups}
The following lemma slightly generalizes Carocci-Turcinovic's result {\cite[Prop. 3.4]{CT15}} and \cite[Lem. 3.6]{JL18Bl}.
\begin{lemma} \label{lem:mut:Bl} In the situation of the blowing up formula Thm. \ref{thm:blow-up}, for any $k \in \ZZ$, we have the following canonical isomorphisms:
	\begin{align*} & \mathbf{L}_{D(Z)_k} (\LL \pi^* (\blank) \otimes \sO(-(k+1)E)) \simeq \LL \pi^*(\blank)  \otimes \sO(-kE), \\
	& \mathbf{R}_{D(Z)_k} (\LL \pi^* (\blank) \otimes \sO(-kE)) \simeq \LL \pi^* (\blank) \otimes \sO(-(k+1)E).
	\end{align*}
\end{lemma}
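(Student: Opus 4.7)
The plan is to deduce both mutation identities simultaneously from a single distinguished triangle together with orthogonality properties of its terms. By the uniqueness of the semiorthogonal decomposition in the defining triangles of $\bL$ and $\bR$, it suffices to exhibit a triangle
\[
X \to Y \to W \xrightarrow{[1]}
\]
with $W \in D(Z)_k$, $X \in {}^{\perp} D(Z)_k$, and $Y \in D(Z)_k^\perp$; such a triangle will automatically realize both mutation triangles, giving $\bL_{D(Z)_k}(X) = Y$ and $\bR_{D(Z)_k}(Y) = X$.

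First I would take the canonical exact triangle $\sO_{\Bl_Z X}(-E) \to \sO_{\Bl_Z X} \to \RR i_{E*}\sO_E \xrightarrow{[1]}$ and tensor it with $\LL\pi^*(A) \otimes \sO(-kE)$ to produce
\[
\LL\pi^*(A) \otimes \sO(-(k+1)E) \to \LL\pi^*(A) \otimes \sO(-kE) \to \LL\pi^*(A) \otimes \sO_E(-kE) \xrightarrow{[1]}.
\]
The membership $\LL\pi^*(A) \otimes \sO_E(-kE) \in D(Z)_k$ follows from the projection formula combined with flat base change along the square $p,i_E,i_Z,\pi$: the third term identifies with $\RR i_{E*}\bigl(\LL p^*\LL i_Z^*(A) \otimes \sO_E(-kE)\bigr)$, which is manifestly the image of $\LL i_Z^*(A) \in D(Z)$ under the embedding $\RR i_{E*}\LL p^*(-) \otimes \sO(-kE)$ defining $D(Z)_k$.

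The remaining orthogonalities are the heart of the argument. For any $B \in D(Z)$, using the adjunction $(\RR i_{E*}, i_E^!)$ together with $i_E^!(-) \simeq \LL i_E^*(-) \otimes \sO_E(E)[-1]$ (since $E$ is a divisor), the projection formula, and the adjunction $(\LL p^*, \RR p_*)$, one finds that both orthogonality $\RR\Hom$'s collapse to expressions of the shape
\[
\RR\Hom_Z\bigl(\,\cdot\,,\; \cdot \otimes \RR p_*\sO_E(E)\bigr)[-1].
\]
The crucial vanishing is
\[
\RR p_*\sO_E(E) \simeq \RR p_*\sO_{\PP(\sN_{Z/X}^\vee)}(-1) = 0,
\]
which holds because $p$ is a $\PP^{r-1}$-bundle with $r \ge 2$ and $\sO(-1)$ has no cohomology on $\PP^{r-1}$. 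Applied on one side this gives $\LL\pi^*(A) \otimes \sO(-kE) \in D(Z)_k^\perp$; applied on the other (after swapping the two arguments and twisting) it gives $\LL\pi^*(A) \otimes \sO(-(k+1)E) \in {}^\perp D(Z)_k$.

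The only mildly delicate point is the identification of the morphism extracted from the ideal-sheaf triangle with the (co)unit of the mutation adjunction; but this is automatic once the two orthogonalities above are established, because the triangle $X \to Y \to W$ with $W \in D(Z)_k$ and $X \in {}^\perp D(Z)_k$, respectively $Y \in D(Z)_k^\perp$, is unique up to canonical isomorphism and therefore must coincide with the defining triangle of the corresponding mutation.
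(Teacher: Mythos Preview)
Your argument is correct, but it takes a different route from the paper's. The paper proves the lemma by showing that Kuznetsov's rotation functor $\foR = \bL_{D(Z)_0}\circ(\otimes\,\sO(-E))$ restricts to the identity on $\LL\pi^*D(X)$: it compares $\bL_{i_{E*}\shA}$ with the internal mutation $\bL_\shA$ on $D(E)$ via an octahedral diagram built from the triangle $i_{E*}i_E^!\to\id\to\otimes\,\sO(E)$, and then uses that $i_E^*\pi^*=p^*i_Z^*$ lands in $\shA=p^*D(Z)$, so $\bL_\shA i_E^*|_{\pi^*D(X)}=0$. The general $k$ then follows from compatibility of mutations with autoequivalences. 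By contrast, you simply tensor the divisor sequence $\sO(-E)\to\sO\to i_{E*}\sO_E$ with $\LL\pi^*A\otimes\sO(-kE)$ and verify directly, via $\RR p_*\sO_E(E)\simeq\RR p_*\sO_{\PP(\sN_{Z/X}^\vee)}(-1)=0$, that the resulting triangle already has the three semiorthogonality properties needed to be \emph{both} mutation triangles at once.

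Your approach is more elementary and self-contained for this particular statement, and the verification of the orthogonalities is clean. What the paper's approach buys is structural: the same octahedral template, with $i_{E*}$ replaced by $\iota^*$, immediately yields the companion Lemma~\ref{lem:Rot:Hyp} for universal hyperplane sections (where the rotation functor becomes $[2]$ rather than the identity), and the ``rotation functor'' formulation is exactly what is reused later in the chess-game mutations of \S\ref{sec:auto}. So the paper's extra abstraction is not gratuitous---it is set up to be recycled---whereas your direct argument would need to be redone in each case.
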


\begin{proof} Consider the (modified) Kuznetsov's rotation functor
	$$\foR: = \mathbf{L}_{D(Z)_0} \circ \otimes \sO(-E): D(\Bl_Z X) \to D(\Bl_Z X).
	$$
We only need to show that the restriction of $\foR$ to the subcategory $\LL \pi^*D(X)$ is the identity functor. In fact, $\foR|_{ \LL \pi^* D(X)} = \Id$ immediately implies the lemma in the case $k=0$:
	$$\mathbf{L}_{D(Z)_0} \circ \otimes \sO(-E)|_{\LL  \pi^*D(X)} = \Id_{\LL \pi^*D(X)}, \quad \text{and} \quad (\otimes \sO(E)) \circ \mathbf{R}_{D(Z)_0}|_{\LL \pi^*D(X)} = \Id_{\LL \pi^*D(X)};$$
Then the case for general $k$ follows from the case $k=0$ by tensoring with $\sO(-kE)$.
%Then the desired isomorphisms of the lemma follows from above formula by the fact: $\sigma \circ \mathbf{L}_{\shA} = \bL_{\sigma(\shA)} \circ \sigma$, $\sigma \circ \mathbf{R}_{\shA} = \mathbf{R}_{\sigma(\shA)} \circ \sigma$ for any autoequivalence $\sigma: \shT \to \shT$ and any admissible subcategory $\shA \subset \shT$.

In the rest of the proof, we will write {\em derived} functors as {\em underived}. 
To show $\foR|_{ \LL \pi^* D(X)} = \Id$, following \cite{Kuz15}, we compare $\bL_{{i_{E*}\shA}}$ with the mutation functor $\bL_{\shA}$. Denote $\shA = p^* D(Z)$, and $\shA(E): = \shA \otimes \sO(E)$. Then $D(Z)_0 = i_{E*} \shA$. For a subcategory $\shB  \subset \shT$, denote $i_{\shB} : B \hookrightarrow \shT$ the inclusion, $i_{\shB}^*$, $i_{\shB}^!$ for the left and right adjoints. Notice the inclusion $i_{i_{E*} \shA}: \shA \simeq i_{E*} \shA \hookrightarrow D(\Bl_Z X)$ is obtained from the composition $\shA \xrightarrow{i_{\shA}} D(E) \xrightarrow{i_{E*}} D(\Bl_Z X)$, therefore $i_{i_{E*} \shA} = i_{E*} \, i_{\shA}$, and its right adjoint is $i_{i_{E*} \shA}^! = i_{\shA}^! \, i_{E*}^!$. We have a diagram of triangles:
	\begin{equation} \label{eqn:oct}
	\begin{tikzcd}
	 i_{E*} \,  (i_{\shA}\, i_{\shA}^!)\, i_{E}^! \ar{r}{\sim} \ar{d} & i_{i_{E*}\,\shA}\, i_{i_{E*}\,\shA}^! \ar{r} \ar{d} & 0  \ar{d} \\
	  i_{E*} \,(\id) \, i_{E}^! \ar{r}  \ar{d} & \id \ar{r}  \ar{d} & \otimes \sO_{\Bl_Z X}(E) \ar[equal]{d}   \\
	 i_{E*} \, (\bL_{\shA}) \,i_{E}^!   \ar[dashed]{r}& \bL_{i_{E*}\,\shA} \ar[dashed]{r}& \otimes \sO_{\Bl_Z X}(E) 
	\end{tikzcd}
	\end{equation}
The first two columns are exact triangles again by definition of left mutation, and the second row is one of standard exact triangles for divisoral inclusion $E \hookrightarrow \Bl_Z X$. From octahedral axiom again the last row is an exact triangle, and therefore composed with $\otimes \sO_{\Bl_Z X}(-E)$, we have an exact triangle 
	$$ i_{E*} \, \bL_{\shA} \,i_{E}^! \circ \otimes \sO_{\Bl_Z X}(-E) \to  \bL_{i_{E*}\,\shA}  \circ \otimes \sO_{\Bl_Z X}(-E) \to \Id \xrightarrow{[1]}.$$
Now since $i_E^! = i_E^*( - \otimes \sO_{\Bl_Z X}(E))[-1]$, the first term is $i_{E*} \,  \bL_{\shA} \,i_{E}^*[-1]$, and the second term is by definition $\foR$. But $\bL_{\shA} \,i_{E}^*|_{\pi^*D(X)} = 0$, since the image of $i_{E}^* \,\pi^* = p^* \, i_{Z}^*$ is contained in $\shA \equiv p^*D(Z)$ and thus annihilated by $\bL_{\shA}$. Therefore $\foR|_{\pi^*D(X)} = \Id_{\pi^*D(X)}$, and its inverse $ \foR|_{\pi^*D(X)} ^{-1} =  (\otimes \sO(E)) \circ \bR_{i_{E*}\, \shA}|_{\pi^*D(X)}$ is also the identity. 
\end{proof}

\subsubsection{Mutations for universal hyperplane sections}
\begin{lemma} \label{lem:Rot:Hyp}In the situation of Orlov's universal hyperplane theorem Thm. \ref{thm:HPDI}, we have the Kuznetsov's rotation functor:
$$\foR (\blank): = \mathbf{L}_{D(X)_1} ((\blank) \otimes \sO_\shH(1)): D(\shH) \to D(\shH) 
$$
(where $D(X)_k := \LL \pi^* D(X) \otimes \sO_{\shH_s}(k)$ for $k \in \ZZ$) when restricted to the subcategory $\RR j_* \,p^* D(Z)$ is a translation: $\foR|_{\RR j_* \,p^* D(Z)} = [2]$. Therefore %for any $\shE^\bullet \in D(Z)$, $k \in \ZZ$, 
we have canonical isomorphisms of functors:
	\begin{align*} & \mathbf{L}_{D(X)_k} (\RR j_* \, p^*  (\blank) \otimes \sO_{\shH}(k) )= \RR j_* \, p^* (\blank)  \otimes \sO_{\shH}(k-1) [2], \\
	& \mathbf{R}_{D(X)_k} (\RR j_* \, p^* (\blank) \otimes \sO_{\shH}(k-1) ) = \RR j_* \, p^* (\blank) \otimes \sO_{\shH}(k) [-2].
	\end{align*}
\end{lemma}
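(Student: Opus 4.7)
The plan is to establish the key identity $\foR|_{\RR j_*\,p^*D(Z)} = [2]$ by directly computing the counit of the adjunction $i_1\dashv i_1^!$, where $i_1 = \LL\pi^*(-)\otimes\sO_\shH(1)\colon D(X)\to D(\shH_s)$ is the fully faithful embedding with image $D(X)_1$; the two displayed functor identities then follow by a standard autoequivalence conjugation argument. The approach parallels the proof of Lem.~\ref{lem:mut:Bl}, with the divisorial inclusion $\iota\colon\shH_s\hookrightarrow\PP(\sE)$, whose normal bundle is $\sO_\shH(1)$, playing the role that the exceptional divisor played there.

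For $A\in D(Z)$, set $S := \RR j_*\,p^*A\otimes\sO_\shH(1)$. The right adjoint of $i_1$ is $i_1^! = \RR\pi_*((-)\otimes\sO_\shH(-1))$, and the projection formula combined with $\RR p_*\sO_{\PP(\sN_i)}=\sO_Z$ yields $i_1^!S = \RR(\pi j)_*\,p^*A = \RR i_*A$, whence $i_1\,i_1^!S = \LL\pi^*\RR i_*A\otimes\sO_\shH(1)$. Factoring $\pi = q\circ\iota$ with $q\colon\PP(\sE)\to X$ flat, flat base change in the cartesian square $\PP(\sN_i) = \PP(\sE)\times_X Z$ gives $\LL q^*\RR i_*A = \tilde{i}_*\,p^*A = \iota_*\RR j_*\,p^*A$ (with $\tilde{i} = \iota\circ j$), so that $\LL\pi^*\RR i_*A = \LL\iota^*\iota_*\RR j_*\,p^*A$.

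Since $\iota$ is cut out by $f_s\in H^0(\sO_{\PP(\sE)}(1))$ and $f_s$ annihilates every sheaf supported on $\shH_s$, tensoring the Koszul resolution $[\sO_{\PP(\sE)}(-1)\xrightarrow{f_s}\sO_{\PP(\sE)}]\xrightarrow{\sim}\iota_*\sO_\shH$ with $\iota_*F$ produces a triangle whose connecting map $F\otimes\sO_\shH(-1)\to F$ is zero, yielding the canonical splitting
\[
\LL\iota^*\iota_*F \simeq F\oplus F\otimes\sO_\shH(-1)[1];
\]
the triangle identity for $\LL\iota^*\dashv\iota_*$ then forces the counit $\LL\iota^*\iota_*F\to F$ to be projection onto the first summand. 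Applying this with $F=\RR j_*\,p^*A$ and twisting by $\sO_\shH(1)$ identifies the counit $i_1\,i_1^!S\to S$ with the split projection $\RR j_*\,p^*A\otimes\sO_\shH(1)\oplus\RR j_*\,p^*A[1]\twoheadrightarrow S$, whose cone is $\RR j_*\,p^*A[2]$. By the defining triangle of the left mutation, $\bL_{D(X)_1}(S) = \RR j_*\,p^*A[2]$, i.e., $\foR|_{\RR j_*\,p^*D(Z)}=[2]$.

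The two displayed functor identities follow formally: the first is obtained by conjugating the $k=1$ case with the autoequivalence $\sigma_{k-1} := (-\otimes\sO_\shH(k-1))$, which sends $D(X)_1$ to $D(X)_k$, via the identity $\sigma\circ\bL_{\shA}=\bL_{\sigma(\shA)}\circ\sigma$ noted at the end of the proof of Lem.~\ref{lem:mut:Bl}. The right-mutation identity follows because $\bR_{D(X)_k}$ is inverse to $\bL_{D(X)_k}$ on the orthogonal complements, once one checks the required orthogonalities $\RR j_*\,p^*A\otimes\sO_\shH(k-1)[2]\in D(X)_k^\perp$ and $\RR j_*\,p^*A\otimes\sO_\shH(k)\in{}^\perp D(X)_k$; both reduce, via adjunction, to the vanishing $\RR p_*\sO_p(-1)=0$, which holds since $p$ is a $\PP^{r-1}$-bundle. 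The only delicate step in the argument is the identification of the $(\LL\iota^*,\iota_*)$-counit with the first-summand projection; this is a consequence of the triangle identity combined with the observation that the Koszul connecting map vanishes because $f_s$ annihilates any sheaf supported on $\shH_s$.
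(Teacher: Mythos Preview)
Your overall strategy is sound and reaches the correct conclusion, but the splitting step is both unnecessary and not rigorously justified as written. The argument ``$f_s$ annihilates $\iota_*F$, so the Koszul complex $[\iota_*F(-1)\xrightarrow{f_s}\iota_*F]$ splits'' establishes only that $\iota_*\LL\iota^*\iota_*F \simeq \iota_*F \oplus \iota_*F(-1)[1]$ in $D(\PP(\sE))$; it does \emph{not} follow that $\LL\iota^*\iota_*F$ splits in $D(\shH)$, because $\iota_*$ is not full. Indeed, the connecting morphism $\delta\colon F\to F(-1)[2]$ satisfies $\iota_*\delta = \delta\circ\epsilon_\iota = 0$ \emph{automatically} (consecutive maps in a triangle), so vanishing after $\iota_*$ gives no information. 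Fortunately you do not need the splitting: the standard divisorial triangle $F(-1)[1]\to\LL\iota^*\iota_*F\xrightarrow{\epsilon_\iota}F$ already shows $\cone(\epsilon_\iota)=F(-1)[2]$, and the real point is that the $(i_1,i_1^!)$-counit on $S$ is identified with $\epsilon_\iota\otimes\sO_\shH(1)$. This identification follows cleanly from the factorisation $\pi=q\circ\iota$: the counit of $\pi^*\dashv\pi_*$ is $\epsilon_\iota\circ\iota^*(\epsilon_q)\iota_*$, and since $\iota_*F=q^*i_*A$ lies in $q^*D(X)$ where $\epsilon_q$ is the identity, only $\epsilon_\iota$ survives. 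With this correction your argument is complete.

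By contrast, the paper argues at the level of functors rather than objects: it sets up a $3\times 3$ diagram of exact triangles comparing $\bL_{D(X)_1}$ on $D(\shH)$ with $\iota^*\,\bL_{q^*D(X)(1)}\,\iota_*$ on $D(\PP(\sE))$, obtaining a triangle
\[
\iota^*\,\bL_{q^*D(X)(1)}\,\iota_* \;\longrightarrow\; \bL_{D(X)_1} \;\longrightarrow\; (-)\otimes\sO_\shH(-1)[2].
\]
Applied to $j_*p^*A\otimes\sO_\shH(1)$, the first term vanishes because $\iota_*(j_*p^*A\otimes\sO_\shH(1))=q^*i_*A\otimes\sO(1)\in q^*D(X)(1)$ is annihilated by $\bL_{q^*D(X)(1)}$. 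Your direct computation and the paper's functor-level comparison use the same two ingredients (flat base change $q^*i_*A=\iota_*j_*p^*A$ and the divisorial triangle for $\iota$), so the approaches are close cousins; the paper's has the mild advantage of making the parallel with Lem.~\ref{lem:mut:Bl} transparent and avoiding any counit-identification bookkeeping.
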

\begin{proof} The proof is totally parallel to the blowing up situation, with the role of $i_{E*}$ played by the pulling back factor $\iota^*$, and the role of left adjoint $i_{E}^!$ played by the left adjoint $\iota_{*}$, where $\iota : \shH \hookrightarrow \PP(\sE)$ is the divisor inclusion in diagram (\ref{diagram:Cayley}). One compares $\bL_{D(X)_1}: D(\shH) \to D(\shH)$ with $\bL_{\pi^* D(X) \otimes \sO_{\PP(\sE)}(1)}: D(\PP(\sE)) \to D(\PP(\sE))$ in a similar diagram as (\ref{eqn:oct}): 
\begin{equation}
	\begin{tikzcd}
	 \iota^* \,  (i_{D(X)(1)}\, i_{D(X)(1)}^!)\, \iota_* \ar{r}{\sim} \ar{d} & i_{D(X)_1}\, i_{D(X)_1}^! \ar{r} \ar{d} & 0  \ar{d} \\
	 \iota^* \,(\id) \,\iota_* \ar{r}  \ar{d} & \id \ar{r}  \ar{d} & \otimes \sO_{\shH}(-\shH)[2] \ar[equal]{d}   \\
	 \iota^*\, ( \mathbf{L}_{\pi^* D(X) \otimes \sO_{\PP(\sE)}(1)}) \, \iota_* \ar[dashed]{r}& \LL_{D(X)_1} \ar[dashed]{r}& \otimes \sO_{\shH}(-\shH)[2]
	\end{tikzcd}
	\end{equation}
Therefore $\bL_{D(X)_1} \circ \otimes \sO_{\shH}(\shH)$, when restricted to $j_* \,p^* D(Z)$, is exactly the translation $[2]$.
\end{proof}

\begin{remark}\label{rmk:sod-H-neg} This in particular allows us to extend the decomposition of hyperplane section formula Thm. \ref{thm:HPDI} to negative degrees, i.e. for any $k = 1,2,\ldots, r-1$, one has:
	$$D(\shH_{s}) = \langle D(X)_{k+1-r}  \ldots, D(X)_{0}, ~~\RR j_* \, p^* D(Z), ~~ D(X)_{1}, \ldots , D(X)_{k-1} \rangle,$$
where $D(X)_k$ denotes the image of $D(X)$ under $\LL \pi^*(\blank)  \otimes \sO_{\shH_s}(k)$ for $k \in \ZZ$.
\end{remark}

\begin{comment}
More explicitly, the diagram is:
\begin{equation}
	\begin{tikzcd}
	 \iota^* \,  (i_{D(X)(1)}\, i_{D(X)(1)}^!)\, \iota_* \ar{r}{\sim} \ar{d} & i_{D(X)_1}\, i_{D(X)_1}^! \ar{r} \ar{d} & 0  \ar{d} \\
	 \iota^* \,(\id) \,\iota_* \ar{r}  \ar{d} & \id \ar{r}  \ar{d} & \otimes \sO_{\shH}(-\shH)[2] \ar[equal]{d}   \\
	 \iota^*\, ( \mathbf{L}_{\pi^* D(X) \otimes \sO_{\PP(\sE)}(1)}) \, \iota_* \ar[dashed]{r}& \LL_{D(X)_1} \ar[dashed]{r}& \otimes \sO_{\shH}(-\shH)[2]
	\end{tikzcd}
	\end{equation}
\end{comment}	

% sec: Degeneracy loci
\subsection{Degeneracy loci} \label{sec:deg} Since a coherent sheaf $\sG$ is always locally a cokernel of a map between vector bundles $\sF \to \sE$, it will be useful to introduce some notations and results on degeneracy loci. Standard references include \cite{Laz04,Otta,B}.

\begin{definition} \label{def:deg} Let $\sF$ and $\sE$ be two vector bundles of rank $f$ and $e$ respectively on a scheme X, and $\sigma: \sF \to \sE$ a map of $\sO_X$ modules. The {\em degeneracy locus of $\sigma$ of rank $k$} is:
	$$D_k(\sigma) := \{ x \in X~|~ \rank \sigma (x) \le k\}.$$
Then $D_k(\sigma) \subset X$ has a natural closed subscheme structure, and $D_{k-1}(\sigma) \subseteq D_{k}(\sigma)$, see \cite[\S 7,2]{Laz04} for more details. The {\em degeneracy locus of $\sigma$} is $X_\sigma := D_{r-1}(\sigma)$, if $\sigma$ is of generic rank $r$, and denote $\mathring{X}_\sigma: = X_\sigma \,\backslash D_{r-2}(\sigma)$.
\end{definition}
%Notice that there is a stratification of $X$ by degeneracy loci: $X = D_r (\sigma) \supset X_\sigma \equiv D_{r-1}(\sigma) \supset D_{r-2} (\sigma) \supset \cdots \supset D_{1} (\sigma) \supset D_{0} (\sigma) \supset D_{-1}(\sigma) = \emptyset$.

\begin{definition} \label{def:sing} Let $\sG$ be a coherent sheaf of rank $r$ over $X$. For an integer $k \in \ZZ$, the {\em degeneracy locus of $\sG$ of rank $> k$} is defined to be 
	$$X^{> k}(\sG): = \{x \in X \mid \rank \sG(x) > k\}.$$
Then $X^{> k}(\sG) \subset X$ has a natural closed subscheme structure given by the  $k$-th Fitting ideal of $\sG$. Notice that if $k \le r-1$, then $X^{> k}(\sG) = X$. The {\em first degeneracy locus} or the {\em singular locus} of $\sG$ is defined to be $X_{\rm sg}(\sG): = X^{>r}(\sG)$. 
\end{definition}

If we assume $\sE$ and $\sF$ be two vector bundles of rank $e$ and $f$ on a regular scheme $X$, $\sigma \in \Hom(\sF,\sE)$, and denote cokernel $\sC_\sigma : = \operatorname{coker}(\sigma)$, i.e.
	$$\sF \xrightarrow{~\sigma~} \sE \to \sC_\sigma \to 0.$$
Then singular locus of $\sC_\sigma$ coincides with the degeneracy locus of $\sigma$, i.e. $\mathrm{Sing}(\sC_\sigma) = X_{\sigma} \subset X$.

\medskip\noindent\textbf{Observation.} If $\sC_\sigma =  \operatorname{coker}(\sF \xrightarrow{\sigma} \sE)$ is the cokernel of a map $\sigma$ between two vector bundles $\sE$, $\sF$ of rank $e$ and $f$. Then for any $x \in X$, $\sC_\sigma(x)$ is the cokernel of $\sigma(x): \sF(x) \to \sE(x)$.
\begin{enumerate} [leftmargin=*]
	\item If $e \ge f$, $\sC_\sigma$ is of rank no less than $e-f$ over $X$. If $e > f$, then the support of $\sG$ is the whole space: $\mathrm{supp} (\sC_\sigma) = X$, and $\PP(\sC_\sigma)$ is (non-empty) of dimension no less than $\dim X + e - f -1$. The fiber dimension of $\PP(\sC_\sigma) \to X$ will jump over degeneracy loci.
 	\item If $e \le f$, then the fiber of the affine cone scheme $C(\sC_\sigma) = \Spec \Sym^\bullet \sC_\sigma$ over a point $x \in D_{e-r}(\sigma)\,\backslash D_{e-r-1}(\sigma)$ is a affine space $\AA_\kk^r$ of dimension $r$, for $r=1,\ldots, e$. Therefore $\PP(\sC_\sigma)$ is supported on $X_\sigma \equiv D_{e-1}(\sigma)$, and the projection $\PP(\sC_\sigma) \to X_{\sigma}$ is an isomorphism over the open subset $X_\sigma \,\backslash D_{e-2}(\sigma)$. The restriction of $\PP(\sC_\sigma) \to X$ over $D_{e-2}(\sigma) \,\backslash D_{e-3}(\sigma)$ is a $\PP^1$-bundle, over $D_{e-3}(\sigma) \,\backslash D_{e-4}(\sigma)$ a $\PP^2$-bundle, etc. In particular, if $\mathring{X}_\sigma \ne \emptyset$, $\PP(\sC_\sigma) \to X_\sigma$ is often a (Springer type) {\em resolution} of $X_\sigma$ (see Lemma \ref{lem:P(E)} below), with discriminant locus $D_{e-2}(\sigma)$.
		\end{enumerate}

\subsubsection*{Bertini type results} The main results of this paper can be applied in combination with some Bertini type results. Assume for this subsection $\kk = \CC$.

\begin{definition} A vector bundle $\sE$ on a scheme $X$ is called {\em globally generated}, if $V = H^0(X,\sE)$ is finite dimensional, and the natural map $V\otimes\sO_X \to \sE$ is surjective. $\sE$ is called {\em ample} (resp. {\em nef}~) if the line bundle $\sO_{\PP(\sE)}(1)$ is ample (resp. nef) on $\PP(\sE)$.
\end{definition}

For example, a vector bundle $\bigoplus_{i} \sO_{\PP^1}(a_i)$ on $\PP^1$, $a_i \in \ZZ$ is ample if and only if $a_i >0$, for all $i$. Note that $\sE \oplus \sF$ is globally generated (resp. ample) if and only if both $\sE$ and $\sF$ are globally generated (resp. ample). For more about positivity of vector bundles, see Lazarsfeld's book \cite[Chap. 6\&7]{Laz04}.

\begin{lemma} \label{lem:D_k} Let $\sE$ and $\sF$ be two vector bundles of ranks $e$ and $f$ on a regular scheme $X$.
	\begin{enumerate}[leftmargin=*]
	\item If $\sHom(\sF,\sE) = \sF^\vee \otimes \sE$ is {\em globally generated}. Then for a {\em general} morphism $\sigma \in \Hom_X(\sF,\sE)$, $D_k(\sigma)$ is either empty, or have expected codimension $(e-k)(f-k)$, and the singular locus $\mathrm{Sing}(D_k (\sigma))$ satisfies $\mathrm{Sing}(D_k (\sigma)) \subset D_{k-1}(\sigma)$. In particular, if $\dim X <  (e-k+1)(f-k+1)$, then $D_k(\sigma)$ is (either empty or) smooth. 
	\item If $X$ is irreducible, projective, and $\sHom(\sF,\sE) = \sF^\vee \otimes \sE$ is {\em ample}. If $\dim X \ge (e-k)(f-k)$, then $D_k(\sigma)$ is non-empty. If $\dim X > (e-k)(f-k)$, then $D_k(\sigma)$ is connected.
	\end{enumerate} 
In particular, if $X$ is irreducible, projective, $\sHom(\sF,\sE) = \sF^\vee \otimes \sE$ is globally generated and ample, and $(e-k)(f-k) <  \dim X < (e-k+1)(f-k+1)$. Then for a {\em general} morphism $\sigma$, $D_k(\sigma)$ is an irreducible smooth (non-empty) subscheme of $X$. 
\end{lemma}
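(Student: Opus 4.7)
The plan is to handle part (1) by the classical universal--degeneracy construction combined with generic smoothness, and to deduce part (2) from the Fulton--Lazarsfeld positivity theorems for ample vector bundles.

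First, for (1), I would set $V := H^0(X,\sHom(\sF,\sE))$, finite dimensional by spannedness, and form the evaluation morphism $\mathrm{ev}\colon V\times X \to |\sHom(\sF,\sE)|$, $(v,x)\mapsto (v(x),x)$, into the total space of the Hom-bundle. Spannedness is precisely the surjectivity of $V\otimes \sO_X \to \sHom(\sF,\sE)$, i.e.\ $\mathrm{ev}$ is linearly surjective on every fiber over $X$, and hence a smooth surjective morphism (over each $x\in X$ it is the trivial affine-space fibration by $\ker(V \to \sHom(\sF,\sE)_x)$). On the target, locally trivializing $\sE$ and $\sF$ identifies each fiber of $|\sHom(\sF,\sE)|\to X$ with $\Mat_{e\times f}$, inside which the universal rank-$\le k$ stratum is the classical determinantal variety of codimension $(e-k)(f-k)$, smooth away from the rank-$\le (k-1)$ stratum. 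Smooth pull-back along $\mathrm{ev}$ therefore produces a universal degeneracy locus
$$\mathcal{D}_k := \mathrm{ev}^{-1}\bigl(|\sHom(\sF,\sE)|^{\le k}\bigr) \subset V\times X$$
of codimension $(e-k)(f-k)$, smooth off $\mathcal{D}_{k-1}$.

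Next, I would apply generic smoothness to the first projection $\pi_V\colon \mathcal{D}_{k'}\setminus \mathcal{D}_{k'-1} \to V$ for every $k'\le k$: over a dense open $U_{k'}\subset V$ the fibers $D_{k'}(\sigma_v) \setminus D_{k'-1}(\sigma_v)$ are smooth of expected codimension $(e-k')(f-k')$ (or empty). Choosing $v\in \bigcap_{k'\le k} U_{k'}$, the morphism $\sigma := \sigma_v$ satisfies the first clause of (1), including $\Sing(D_k(\sigma))\subset D_{k-1}(\sigma)$. The last clause is then immediate: when $\dim X < (e-k+1)(f-k+1)$, the expected codimension of $D_{k-1}(\sigma)$ exceeds $\dim X$, so $D_{k-1}(\sigma) = \emptyset$ for general $\sigma$, whence $D_k(\sigma)$ is smooth wherever it is non-empty.

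For part (2), both the non-emptiness assertion (when $\dim X \ge (e-k)(f-k)$) and the connectedness assertion (when $\dim X > (e-k)(f-k)$) are precisely the Fulton--Lazarsfeld positivity theorems for degeneracy loci of ample bundles, proved in \cite[Chap.~II.8]{Laz04}; I would simply quote them. Non-emptiness comes from strict positivity of the Thom--Porteous Chern class attached to the ample bundle $\sF^\vee\otimes \sE$, and connectedness from a Fulton--Hansen-type connectedness theorem applied to the universal determinantal locus inside a Grassmann bundle. Combined with part (1) applied to a general $\sigma$, this yields the ``in particular'' clause, since a smooth connected scheme is irreducible. I expect the only real obstacle to be the Fulton--Lazarsfeld positivity package invoked for part (2); everything else reduces to a routine universal Bertini argument, so beyond that the main care is matching codimension conventions and the precise ampleness hypothesis with the form used in \cite{Laz04}.
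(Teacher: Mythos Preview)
Your proposal is correct and is precisely the standard argument. The paper's own proof consists only of citations (to B{\u{a}}nic{\u{a}} \S4.1, Ottaviani Thm.~2.8, and Lazarsfeld Thm.~7.2.1), and the content of those references is exactly the universal--degeneracy plus generic--smoothness argument you outline for (1) together with the Fulton--Lazarsfeld positivity package you invoke for (2); so there is no meaningful difference in approach, you have simply unpacked what the paper leaves to the literature.
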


\begin{proof} See \cite[\S 4.1]{Ba} or \cite[Thm. 2.8]{Otta}, and \cite[Thm. 7.2.1]{Laz04}.
\end{proof}

\begin{lemma} \label{lem:P(E)} Assume $\sHom(\sF,\sE) = \sF^\vee \otimes \sE$ is {\em globally generated}, and $\kk =\CC$. Then for a general section $\sigma \in \Hom_X(\sF,\sE) = H^0(X,\sF^\vee \otimes \sE)$, $\PP(\sC_\sigma) \subset \PP(\sE)$ is a smooth subscheme.
\end{lemma}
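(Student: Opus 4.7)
The plan is to realize $\PP(\sC_\sigma) \subset \PP(\sE)$ as the zero scheme of a section of a globally generated vector bundle on the smooth ambient $\PP(\sE)$, and then invoke a Bertini-type result for vector bundles.

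First, I would set up the tautological picture on $q \colon \PP(\sE) \to X$ (Grothendieck convention): one has the tautological quotient $q^{*} \sE \twoheadrightarrow \sO_{\PP(\sE)}(1)$. Given $\sigma \in \Hom_X(\sF, \sE) = H^{0}(X, \sF^{\vee} \otimes \sE)$, the composition
\[
q^{*} \sF \xrightarrow{q^{*}\sigma} q^{*} \sE \twoheadrightarrow \sO_{\PP(\sE)}(1)
\]
is a morphism $q^{*} \sF \to \sO_{\PP(\sE)}(1)$, equivalently a section $\widetilde{\sigma} \in H^{0}\bigl(\PP(\sE),\, \sV\bigr)$, where $\sV := q^{*} \sF^{\vee} \otimes \sO_{\PP(\sE)}(1)$ is a rank $f$ vector bundle on $\PP(\sE)$. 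Chasing the definition of $\PP(\sC_\sigma)$ fibrewise (a $1$-dimensional quotient $\sE(x) \twoheadrightarrow L$ lies in $\PP(\sC_\sigma)$ iff it factors through $\sC_\sigma(x) = \operatorname{coker} \sigma(x)$, iff the composition $\sF(x) \to \sE(x) \to L$ vanishes), one checks that scheme-theoretically $\PP(\sC_\sigma) = Z(\widetilde{\sigma})$ inside $\PP(\sE)$.

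Next I would verify that $\sV$ is globally generated on $\PP(\sE)$ and that the parameter space of sections is exactly what we want. By the projection formula and $q_{*}\sO_{\PP(\sE)}(1) = \sE$,
\[
H^{0}(\PP(\sE), \sV) = H^{0}(X, \sF^{\vee} \otimes q_{*} \sO_{\PP(\sE)}(1)) = H^{0}(X, \sF^{\vee} \otimes \sE),
\]
so ``general $\sigma$'' and ``general $\widetilde{\sigma}$'' mean the same thing. Global generation of $\sV$ follows by factoring the evaluation map through the surjections
\[
H^{0}(X, \sF^{\vee} \otimes \sE) \otimes \sO_{\PP(\sE)} \twoheadrightarrow q^{*}(\sF^{\vee} \otimes \sE) \twoheadrightarrow q^{*} \sF^{\vee} \otimes \sO_{\PP(\sE)}(1) = \sV,
\]
where the first surjection uses the spannedness hypothesis on $\sF^{\vee} \otimes \sE$ (pulled back via $q$) and the second uses the tautological quotient.

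Finally, since $X$ is regular over $\CC$ (hence smooth) and $\sE$ is locally free, $\PP(\sE)$ is smooth. The standard Bertini theorem for globally generated vector bundles on smooth varieties over $\CC$ (see e.g.\ \cite[Thm.~1.10]{Otta} or the references cited in Lem.~\ref{lem:D_k}) then implies that for a general section $\widetilde{\sigma} \in H^{0}(\PP(\sE), \sV)$, the zero scheme $Z(\widetilde{\sigma}) = \PP(\sC_\sigma)$ is smooth (of pure codimension $f$ when nonempty, or empty). The only mildly subtle point is the scheme-theoretic identification $\PP(\sC_\sigma) = Z(\widetilde{\sigma})$; everything else is a direct Bertini argument applied after transferring the problem from $X$ upstairs to $\PP(\sE)$.
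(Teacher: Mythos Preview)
Your proposal is correct and follows essentially the same approach as the paper's proof: identify $\PP(\sC_\sigma) \subset \PP(\sE)$ as the zero locus of the section of $q^*\sF^\vee \otimes \sO_{\PP(\sE)}(1)$ corresponding to $\sigma$, observe that this bundle is globally generated since $\sF^\vee \otimes \sE$ is spanned, and apply Bertini. You in fact supply more detail than the paper (the explicit factorization of the evaluation map and the fibrewise verification of $\PP(\sC_\sigma) = Z(\widetilde{\sigma})$), but the core argument is identical.
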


\begin{proof} See \cite[Thm. 2]{Ba}, \cite[Lem. 3.1]{BW96}. The key is again to use Cayley's trick \S \ref{sec:semi-local}. Since $\sigma \in \Hom_X(\sF,\sE) = \Hom_{\PP(\sE)}(\pi^* \sF, \sO_{\PP(\sE)} (1))$, where $\pi: \PP(\sE) \to X$ is the projection. Therefore $\PP(\sC_\sigma) \subset \PP(\sE)$ is exactly cut out by the section of the vector bundle $\pi^* \sF^\vee \otimes \sO_{\PP(\sE)} (1)$ which corresponds to $\sigma$ under above identification. Since $\pi^* \sF^\vee \otimes \sO_{\PP(\sE)} (1)$ is globally generated, Bertini's theorem implies the smoothness results.
\end{proof}

%%% Section: projectivization under positivity
\section{Projectivization formula}

Let $\sE$, $\sF$ be two locally free sheaves  of rank $e$ and $f$ on a regular scheme $X$ over $\kk$, and let $\sigma \in \Hom_X(\sF,\sE)$ be a morphism. Then there are canonical identifications:
	$$\Hom_X(\sF,\sE) = \Hom_X(\sE^\vee, \sF^\vee) = H^0(X, \sF^\vee \otimes \sE).$$ 
Therefore $\sigma$ corresponds canonically a map $\sigma^\vee : \sE^\vee \to \sF^\vee$. 

% Theorem: duality 
\begin{theorem}\label{thm:duality} In the above situation, consider the following coherent sheaves:
	\begin{align*}
	\sC_\sigma := \operatorname{coker} \big( \sF \xrightarrow{\sigma} \sE  \big), \qquad \sC_{\sigma^\vee}  := \operatorname{coker} \big(\sE^\vee \xrightarrow{\sigma^\vee} \sF^\vee \big).
	\end{align*}
Suppose $\PP(\sC_\sigma)$ and $ \PP(\sC_{\sigma^\vee})$ are of expected dimensions, i.e.
	\begin{equation} \label{eqn:expdim} \dim \PP(\sC_\sigma) = \dim X + e -f -1, \qquad \dim \PP(\sC_{\sigma^\vee}) = \dim X + f - e - 1.\end{equation}
Denote $\pi:  \PP(\sC_\sigma) \to X$ and $\pi': \PP(\sC_{\sigma^\vee}) \to X$ the projections.
	\begin{enumerate}[leftmargin=*]
	\item If $e \ge f$, then the functor $\LL \pi^*(\blank) \otimes \sO_{\PP(\sC_{\sigma})}(k) \colon D(X) \to D(\PP(\sC_{\sigma}))$ is fully faithful for each $k \in \ZZ$. Furthermore, there is a fully faithful Fourier-Mukai functor $\Phi_\shP \colon D(\PP(\sC_{\sigma^\vee})) \to D(\PP(\sC_\sigma))$ and a semiorthogonal decomposition:
	\begin{equation*}
	D(\PP(\sC_\sigma)) = \big \langle \Phi_{\shP} (D(\PP(\sC_{\sigma^\vee}))), D(X)(1), D(X)(2), \ldots,  D(X)(e-f)\big \rangle,
	\end{equation*}
where $D(X)(k)$ denotes the image $\LL \pi^*(D(X))\otimes \sO_{\PP(\sC_{\sigma})}(k)$, $k = 1, \ldots, e-f$, 
	\item If $f \ge e$, then the functor $\LL \pi'^*(\blank) \otimes \sO_{\PP(\sC_{\sigma^\vee})}(k) \colon  D(X) \to D(\PP(\sC_{\sigma^\vee}))$ is fully faithful for each $k \in \ZZ$. Furthermore, there is a fully faithful Fourier-Mukai functor $\Phi_{\shP'} \colon D(\PP(\sC_{\sigma})) \to D(\PP(\sC_{\sigma^\vee}))$ and a semiorthogonal decomposition:
	\begin{equation*}
	D(\PP(\sC_{\sigma^\vee})) = \big \langle  \Phi_{\shP'}(D(\PP(\sC_{\sigma}))), D(X)(1), D(X)(2), \ldots,  D(X)(f-e)\big \rangle.
	\end{equation*}
	\end{enumerate}
If furthermore, $\PP(\sC_\sigma) \times_X \PP(\sC_{\sigma^\vee})$ is of expected dimension 
		\begin{equation} \label{eqn:expdim2} \dim \PP(\sC_\sigma) \times_X \PP(\sC_{\sigma^\vee}) = \dim X -1,
		\end{equation}
then the fully faithful functor $\Phi_\shP$ (resp. $\Phi_{\shP'}$) defined above is given by:
	 $$\Phi_\shP = \RR q_{1*} \, \LL q_{2}^*,
	 \qquad \text{(resp.} 
 	\qquad \Phi_{\shP'} = \RR q_{2*} \, \LL q_{1}^*,)
 	$$
where $q_1$ and $q_2$ denote respectively the projections of $\PP(\sC_\sigma) \times_X \PP(\sC_{\sigma^\vee})$ to $\PP(\sC_\sigma) $ and $\PP(\sC_{\sigma^\vee})$.
\end{theorem}

The theorem states: (since the situation is symmetric, assume $e\ge f$) the derived category  $D(\PP(\sC_{\sigma}))$, as $\PP(\sC_\sigma)$ is a {\em generic $\PP^{e-f-1}$-bundle} over $X$, contains {\em $(e-f)$ copies of $D(X)$}, whose orthogonal component is exactly given by $D(\PP(\sC_{\sigma^\vee}))$. Notice that $\PP(\sC_{\sigma^\vee}) = : \tX$ is  {\em Springer type partial desingularization} of the degeneracy locus $X_\sigma$ (Def. \ref{def:deg}), given by 
	$$\tX = \PP(\sC_{\sigma^\vee})= \{(x, [H_x]) \mid \im \sigma^\vee(x) \subset H_x \} \subset \PP(\sF^\vee),$$
where $x \in X$, and $H_x \subset \sF^\vee(x)$ is a hyperplane.

Interesting special cases of the theorem include:
\begin{itemize}
		\item If $X_\sigma = \emptyset$, then $\PP(\sC_{\sigma^\vee}) = \emptyset$, and $\sC_{\sigma}$ is locally free. The theorem is nothing but Orlov's  {projective bundle} formula Thm. \ref{thm:proj_bundle}. 
		\item If $\sF = \sO_X$, the the expected dimension condition is equivalent to $\sigma: \sO \to \sE$ is a regular section. Then $\PP(\sC_\sigma) = \shH_s$ and $\PP(\sC_{\sigma^\vee}) = Z(s) \subset X$. The theorem is nothing but Orlov's result on generalized {universal hyperplane sections} Thm. \ref{thm:HPDI}. 
		\item If $e =f$, then $X_\sigma^+ = \PP(\sC_\sigma)$ and $\tX= \PP(\sC_{\sigma^\vee})$ are two (in general different) partial desingularizations of the degeneracy locus $X_\sigma$. The theorem states that $D(X_\sigma^+) \simeq D(\tX)$ for the flop $X_\sigma^+ \dashrightarrow \tX$. Examples include Atiyah flops. See \S \ref{sec:springer}.
		\item If $f = e-1$, then the theorem yields a blowup formula for blowups of a regular scheme $X$ along a Cohen-Macaulay subscheme $Z = X_\sigma$ of codimension $2$. See \S \ref{sec:CM2}.
	\end{itemize}		

Therefore above theorem is a {generalization} of Orlov's {projective bundles} formula \cite{Orlov92} and {universal hyperplane section} formula Thm. \ref{thm:HPDI}, as well as the derived equivalences of the flops from {two Springer type partial desingularizations} of determinantal hypersurfaces.

\begin{remark}\label{rmk: expect} Assume $e \ge f$. If each degeneracy locus $D_{f-k}(\sigma)$ satisfies
 	$$\mathrm{codim} (D_{f-k}(\sigma) \subset X) \ge e -f + k, \quad \text{for} \quad k \ge 1,$$
(in particular, $X_{\sigma} = D_{f-1}(\sigma)$ has expected codimension $e-f+1$), then condition (\ref{eqn:expdim}) of the theorem is satisfied. If furthermore the following holds:
 	$$\mathrm{codim} (D_{f-k}(\sigma) \subset X) \ge e -f + 2k - 1, \quad \text{for} \quad k \ge 1.$$
Then $\PP(\sC_\sigma) \times_X \PP(\sC_{\sigma^\vee})$ also has the expected dimension \eqref{eqn:expdim2}. These required codimensions are smaller than the expected codimension $k (e -f + k)$ of $D_{f-k}(\sigma)$ when $k > 1$.
 \end{remark}
  
 \begin{remark}[Generic expected dimension and smoothness]\label{rmk: generic} If $\kk = \CC$, and $\sHom(\sF,\sE)$ is {globally generated}, then the Bertini type results of Lem. \ref{lem:D_k} implies that for a {\em general} $\sigma \in \Hom(\sF,\sE)$, the conditions \eqref{eqn:expdim}, \eqref{eqn:expdim2} of the theorem are satisfied. Moreover, by Lem. \ref{lem:P(E)}, for a general $\sigma \in \Hom(\sF,\sE)$, $\PP(\sC_\sigma)$ and $\PP(\sC_{\sigma^\vee})$ are smooth. Hence, say if $e > f$, then $\PP(\sC_{\sigma^\vee})$ is the {(Springer type) {\em resolution}} of the first degeneracy locus $X_{\rm sg}(\sC_\sigma)$ of $\sC_\sigma$.
% so the one of $\PP(\sC_\sigma)$ and $\PP(\sC_{\sigma^\vee})$ having smaller dimension is exactly the {(Springer type) resolution} of the singular locus of the other.
 \end{remark}

%%% Proof %%% 
\begin{proof}[Proof of Theorem \ref{thm:duality}.] Since the situation is symmetric, assume without loss of generality that $e \ge f$. Notice that we have canonical identifications:
\begin{align*}
	& H^0(X, \sF^\vee \otimes \sE) = H^0(\PP(\sE), a_1^*\, \sF^\vee \otimes \sO_{\PP(\sE)}(1)) \\
	& = H^0(\PP(\sF^\vee), a_2^*\, \sE \otimes \sO_{\PP(\sF^\vee)}(1)) = H^0(\PP(\sE) \times_X \PP(\sF^\vee),  \sO_{\PP(\sE)}(1) \boxtimes_X \sO_{\PP(\sF^\vee)}(1)),
\end{align*}
where $a_1 \colon \PP(\sE) \to X$ and $a_2 \colon \PP(\sF^\vee) \to X$ are the natural projections. Therefore $\sigma$ canonically corresponds to sections $s_1$, $s_2$ of vector bundles $ a_1^*\, \sF^\vee \otimes \sO_{\PP(\sE)}(1)$ and $a_2^*\, \sE \otimes \sO_{\PP(\sF^\vee)}(1)$ on $\PP(\sE)$ and respectively $\PP(\sF^\vee)$. Then $\PP(\sC_\sigma) \subset \PP(\sE)$ (resp. $\PP(\sC_{\sigma^\vee}) \subset \PP(\sF^\vee)$) is exactly cut out by the section $s_1$ (resp. $s_2$). Therefore $\PP(\sC_\sigma)$ (resp. $\PP(\sC_{\sigma^\vee})$) is of {\em expected dimension} if and only if $s_1$ (resp. $s_2$) is a {\em regular section}.

Note also $\sigma$ corresponds to a section $s$ of the line bundle $\sO_{\PP(\sE)}(1) \boxtimes_X \sO_{\PP(\sF^\vee)}(1))$ on $\PP(\sE) \times_X \PP(\sF^\vee)$. Denote $\shH \subset \PP(\sE) \times_X \PP(\sF^\vee)$ the zero locus of $s$. Then $\shH$ will play a similar role as {\em universal hyperplane section} in HPD theory. In fact $\shH$ satisfies a fiberwisely universal quadratic relation: over a point $x \in X$, the fiber of $\shH$ is
	$$\shH|_{x} = \{  ([v], [w]) \mid \langle \sigma^\vee(x) \cdot v, w \rangle = \langle v, \sigma(x)\cdot w \rangle = 0 \} \subset  \PP(\sE(x)) \times \PP(\sF^\vee(x)),$$
where $v \in \sE(x)^*$, $w \in \sF(x)$, and the first bracket $\langle \blank, \blank \rangle$ is the pairing between $\sF(x)^*$ and $\sF(x)$, and the second is the one between $\sE(x)^*$ and $\sE(x)$.

Now the key is to observe the following: the space $\shH$ is the common total space of Cayley's trick (\S \ref{sec:semi-local}) for the both the two zero loci of regular sections $\PP(\sC_\sigma) = Z(s_1) \subset \PP(\sE)$ and respectively $\PP(\sC_{\sigma^\vee})  = Z(s_2) \subset \PP(\sF^\vee)$:
\begin{align*}
	\shH & = \PP_{\PP(\sE)}\big(\operatorname{coker}\left[\sO \xrightarrow{s_1} a_1^* \sF^\vee \otimes \sO_{\PP(\sE)}(1)\right] \big) \subset \PP(\sE) \times_X \PP(\sF^\vee), \\
	& = \PP_{\PP(\sF^\vee)}\big(\operatorname{coker}\left[\sO \xrightarrow{s_2} a_2^* \sE \otimes \sO_{\PP(\sF^\vee)}(1)\right] \big) \subset \PP(\sE) \times_X \PP(\sF^\vee).
\end{align*}

The geometry is summarized in the following diagrams, with all the notations of maps as indicated. Notice the three squares (rhombus-shaped) are Cartesian squares, of which the two on each side are exactly ones for Cayley's trick (\ref{diagram:Cayley}) for the above two projectivization.

 % diagram
\begin{equation}\label{diagram}
\begin{tikzcd}[back line/.style={}]		
		&	& \PP(\sC_\sigma) \times_X \PP(\sC_{\sigma^\vee})  \ar[hook']{ld}[swap]{\iota_2}  \ar[hook]{rd}{\iota_1}  \ar[bend right]{ddll}[swap]{q_1} \ar[bend left]{ddrr}{q_2}  \\
		&  \PP(\sC_\sigma) \times_X \PP(\sF^\vee)   \ar{ld}[swap]{p_1} \ar[back line, hook]{rd}[swap]{j_1} &  &  \PP(\sE) \times_X \PP(\sC_{\sigma^\vee}) \ar[crossing over, hook']{ld}{j_2} \ar{rd}{p_2} \\
	\PP(\sC_\sigma) \ar[hook]{rd}{i_1}	&	& \shH  \ar{ld}[swap]{\pi_1} \ar{rd}{\pi_2}	&	& \PP(\sC_{\sigma^\vee}) \ar[hook']{ld}{i_2}\\
		& \PP(\sE) \ar{r}{a_1}	& X	&  \PP(\sF^\vee) \ar{l}[swap]{a_2} 	
\end{tikzcd}
\end{equation}
\medskip

Then Orlov's result Thm. \ref{thm:HPDI} implies there are $X$-linear Fourier-Mukai functors $\Phi_{\shE_1}$ and $\Phi_{\shE_2}$ that embed $D(\PP(\sC_\sigma)$ and $D(\PP(\sC_{\sigma^\vee}))$ into a common category $D(\shH)$, where 
	$$\Phi_{\shE_1} = \RR j_{1*}\, \LL p_1^*:  D(\PP(\sC_\sigma)) \hookrightarrow D(\shH), \quad \Phi_{\shE_2} = \RR j_{2*}\, \LL p_2^*:  D(\PP(\sC_{\sigma^\vee})) \hookrightarrow D(\shH),$$
and $j_1,p_1,j_2,p_2$ are indicated in (\ref{diagram}). The kernels of $\Phi_{\shE_1}$ and $\Phi_{\shE_2}$ are given by 
	$$\shE_1 = \sO_{\PP(\sC_\sigma)\times_X \PP(\sF^\vee)} \in D(\PP(\sC_\sigma)\times \PP(\sF^\vee)), \quad \shE_2 = \sO_{\PP(\sC_{\sigma^\vee}) \times_X \PP(\sE)} \in D(\PP(\sC_{\sigma^\vee}) \times \PP(\sE)).$$
Furthermore, there are $X$-linear semiorthogonal decompositions:
	\begin{align*}
	D(\shH) & = \big \langle \Phi_{\shE_1} D(\PP(\sC_\sigma)), ~ \pi_1^*D(\PP(\sE))(0,1), \ldots, \pi_1^* D(\PP(\sE))(0,f-1)\big \rangle \\
	& = \big \langle \Phi_{\shE_2} D(\PP(\sC_{\sigma^\vee})), ~ \pi_2^*D(\PP(\sF^\vee))(1,0), \ldots, \pi_2^* D(\PP(\sF^\vee))(e-1,0) \big \rangle,
	\end{align*}
where we denote the line bundle $\sO(\alpha,\beta) := \sO_{\PP(\sE)}(\alpha) \boxtimes_X \sO_{\PP(\sF^\vee)}(\beta) |_\shH$, $\alpha, \beta \in \ZZ$, and $\shA(\alpha,\beta)$  denotes the image of a subcategory $\shA$ under the autoequivalences $\otimes \sO(\alpha,\beta)$ of $D(\shH)$.

Since $\shH$ is a {\em $\sO(1,1)$-divisor} inside $\PP(\sE) \times_X \PP(\sF^\vee)$, we are exactly in the situation of a {\em ``chess game''} in \cite{RT15HPD} and \cite{JLX17}, 
see Appendix \ref{sec:app}. More explicitly, denote $i_{\shH}\colon \shH \hookrightarrow \PP(\sE) \times_X \PP(\sF^\vee)$ the inclusion, from the exact sequence $0 \to \sO(-1,-1) \to \sO \to \sO_{\shH} \to 0$ and projection formula, one directly obtains that the cotwist functor of $i_{\shH}*$ satisfies:
	$$ C_{i_{\shH}*} \equiv \cone (1 \to i_{\shH *} i_{\shH}^*)[-1] = \otimes \sO(-1,-1)  \in {\rm Auteq} (D(\PP(\sE) \times_X \PP(\sF^\vee))),$$
this verifies condition (\textbf{A-1}) of  \S \ref{sec:app}. Notice that for $\alpha, \beta \in \ZZ$, one has
	$$\pi_1^* D(\PP(\sE))(0,\beta) = i_{\shH}^* \langle D(X)(k,\beta) \mid k \in \ZZ\rangle, \qquad \pi_2^* D(\PP(\sF^\vee))(\alpha,0) =  i_{\shH}^* \langle D(X)(\alpha, k) \mid k \in \ZZ\rangle,$$
therefore the condition (\textbf{A-2}) is verified by the extended Orlov's hyperplane section theorem (see Rmk. \ref{rmk:sod-H-neg}). Denote $\Phi_{\shE_1}^L: D(\shH) \to D(\PP(\sC_\sigma))$ the left adjoint of the embedding $\Phi_{\shE_1}$, then Thm. \ref{thm:CG} implies the functors	
\begin{align*} & \Phi_{\shE_1}^L \circ \Phi_{\shE_2} \colon D(\PP(\sC_{\sigma^\vee})) \hookrightarrow D(\shH) \to D(\PP(\sC_\sigma)), \\ 
		& \Phi_{\shE_1}^L| \colon  D(X)(i,0) \hookrightarrow D(\shH) \to D(\PP(\sC_\sigma)), \quad i=1,2,\ldots, e-f,
	\end{align*} 
are all fully faithful, and their images give the desired decomposition of $D(\PP(\sC_\sigma))$. For description of Fourier-Mukai kernels, notice $\Phi_{\shE_1}^L$ is given by kernel:
	$$\shE_1^L = \sO_{\PP(\sC_\sigma)\times_X \PP(\sF^\vee)} (0,-f)[f-1] \otimes p_1^* \det \sF^\vee \in D(\PP(\sC_\sigma)\times \PP(\sF^\vee)).$$
So if we right-compose the $\Phi_{\shE_1}^L$ with autoequivalence $\otimes \sO_{\shH} (0,f)[1-f]$ of $D(\shH)$, and left-compose with the autoequivalence $\otimes \det \sF$ of $D(\PP(\sC_\sigma))$, then we have decomposition
	$$D(\PP(\sC_\sigma)) = \big \langle \Phi_{\shP} \big( D(\PP(\sC_{\sigma^\vee})) \big)~, \Phi \big(D(X)(1,0)\big), \ldots, \Phi \big(D(X)(e-f, 0) \big) \big \rangle,$$
where $\Phi_\shP$ and $\Phi|_{D(X)(k,0)}$, $k=1,\ldots, e-f$ are fully faithful, and $\Phi_\shP, \Phi$ are given by:
	\begin{align*}\Phi_{\shP} & = (\otimes \det \sF) \circ \Phi_{\shE_1}^L \circ (\otimes \sO_{\shH} (0,f)[1-f]) \circ \Phi_{\shE_2} \colon & D(\PP(\sC_{\sigma^\vee})) \to D(\PP(\sC_\sigma)),  \\
	\Phi&= (\otimes \det \sF ) \circ \Phi_{\shE_1}^L \circ (\otimes \sO_{\shH} (0,f)[1-f])\colon & D(\shH) \to D(\PP(\sC_\sigma)).
	\end{align*}
By direct computations, $\Phi = \RR p_{1*} \, \LL j_1^*$. Since the maps of diagram (\ref{diagram}) are compatible with their projections to $X$, then one has $\Phi( D(X)(k,0)) =\LL \pi^*(D(X))\otimes \sO_{\PP(\sC_{\sigma})}(k)$, for $k \in \ZZ$.

Now if $\dim \PP(\sC_\sigma) \times_X \PP(\sC_{\sigma^\vee}) = \dim X -1$, then the middle rhombus-shaped square of diagram (\ref{diagram}) is Tor-independent, since it is a square coming from a local complete intersection inside Cohen-Macaulay variety of expected dimensions (see \cite[Lem. 2.32 (iii)]{Kuz07HPD}). Then it follows from base-change formula for Tor-independent squares (see \cite[IV 3.1]{SGA6} or \cite{Kuz07HPD}) that
	$$ \Phi_{\shP} = \RR p_{1*} \, \LL j_1^* \, \RR j_{2*} \, \LL p_2^*  =  \RR p_{1*} (\RR \iota_{2*}  \, \LL \iota_1^*) \LL p_2^*  = \RR q_{1*} \, \LL q_2^*. \qquad \qquad \qquad \qedhere $$
\end{proof}
%%% End of Proof %%%

% Corollary: projectivization
\begin{theorem}[Projectivization formula] \label{cor:projectivization}
Let $\sG$ be a coherent sheaf of rank $r$ and of homological dimension $\le 1$ on a regular scheme $X$. 
Assume that $\PP(\sG)$ and $\PP(\sExt^1(\sG,\sO_X))$ are irreducible of expected dimensions:
	$$\dim \PP(\sG) = \dim X + (r - 1), \qquad \dim \PP(\sExt^1(\sG,\sO_X)) =  \dim X - (r +1),$$
and $\PP(\sExt^1(\sG,\sO_X)) \times_X Y$ is of expected dimension $\dim X -1$. Then the functors
	$$\RR q_{1*} \, \LL q_{2}^* : D(\PP(\sExt^1(\sG,\sO_X))) \to D(\PP(\sG)) \quad \text{and} \quad \LL \pi^*(\blank) \otimes \sO_{\PP(\sG)}(k): D(X) \to D(\PP(\sG))$$ 
are fully faithful, where $q_1$, $q_2$ are the projections of $\PP(\sG) \times_X \PP(\sExt^1(\sG,\sO_X))$ to the first and second factors, $\pi: \PP(\sG) \to X$ is the projection. Furthermore, the images of these functors induce a semiorthogonal decomposition:
	$$D(\PP(\sG)) = \big \langle \RR q_{1*} \, \LL q_{2}^*( D(\PP(\sExt^1(\sG,\sO_X)))), D(X)(1), \ldots, D(X)(r)\big \rangle,$$
where $D(X)(k)$ denotes the image $\LL \pi^*(D(X)) \otimes \sO_{\PP(\sG)}(k)$, $k=1,\ldots, r$.
\end{theorem}

\begin{proof} Since $X$ is regular (and noetherian, separated over $\kk$), it has the resolution property. Therefore $\sG$ admits a two-term resolution of finite locally free sheaves $0 \to \sF \to \sE \to \sG \to 0$. Hence the theorem reduces to Thm. \ref{thm:duality}.
\end{proof}

By Remark \ref{rmk: expect}, the condition of Thm \ref{cor:projectivization} is satisfied if for all $k \ge 1$,
	$$\mathrm{codim}_X (X^{>r+k-1}(\sG)) \ge r + 2k-1 \quad \text{for} \quad k \ge 1, $$
which is smaller than the expected codimension $k(r+k)$ of $X^{>r+k-1}(\sG) \subset X$ if $k >1$.

%\begin{proof} One need only to show that for different local presentations $\sigma_U: \sF_U \to \sE_U$ on open subschemes $U$, the schemes $Y_U : = \PP_U(\operatorname{coker}(\sigma_U^\vee))$ glue together to a global scheme $Y$ with a projection $p: Y \to X$. Then the functors $\RR q_{1*} \, \LL q_{2}^*$ and $\LL \pi^*(\blank) \otimes \sO_{\PP(\sG)}(k)$ are globally well-defined, and coincide with the fully faithful functors of Thm. \ref{thm:duality} on every open subscheme $U$. Then the theorem follows directly from Thm. \ref{thm:duality} and $X$-linearity of the above functors, see \cite{Kuz11Bas}. But to show the $X$-schemes $\{Y_U\}_{U \subset X}$ glue, it suffices to notice that they are all locally given by $Y_U = \PP(\sExt^1(\sG,\sO))|_U$, therefore they glue together to $Y =\PP(\sExt^1(\sG,\sO_X))$. \end{proof}

\begin{remark} The morphism $p: \PP(\sExt^1(\sG,\sO_X)) \to \mathrm{Sing}(\sG)$ is a Springer type partial desingularization. If we choose a two-term resolution $0 \to \sF \xrightarrow{\sigma} \sE \to \sG$, then
	$$ \PP(\sExt^1(\sG,\sO_X)) = \{(x, [H_x]) \mid \im \sigma^\vee(x) \subset H_x \} \subset \PP(\sF^\vee),$$
where $H_x \subset \sF^\vee(x)$ is a hyperplane. %Therefore it is of Springer type. 
The map $p$ has the following property: $p$ is an isomorphism over $X \,\backslash X^{>r+1}(\sG)$, and a $\PP^k$-fibre bundle over $X^{>r+k}(\sG) \,\backslash X^{>r+k+1}(\sG)$ for $k \ge 1$.
\end{remark}

\begin{remark} Although $\PP(\sG)$ and $\PP(\sExt^1(\sG,\sO_X))$ are smooth in the ``generic situation", see Rmk. \ref{rmk: generic}), in general, they could be singular. Combined with Orlov's theory of singularity categories \cite{Orlov04}, Thm. \ref{cor:projectivization} implies that there is a semiorthogonal decomposition
	$${\rm Perf}(\PP(\sG)) = \big \langle \RR q_{1*} \, \LL q_{2}^*( {\rm Perf}(\PP(\sExt^1(\sG,\sO_X)))), ~{\rm Perf}(X)(1), \ldots, {\rm Perf}(X)(r)\big \rangle,$$
as well as an equivalence of categories:
	$$\RR q_{1*} \, \LL q_{2}^*: D_{\mathrm{sg}}(\PP(\sExt^1(\sG,\sO_X))) \simeq D_{\mathrm{sg}}(\PP(\sG)).$$
Here, for a noetherian $\kk$-scheme $Y$, ${\rm Perf}(Y)$ denotes its category of perfect complexes, and $D_{\mathrm{sg}}(Y): = D(Y) / {\rm Perf}(Y)$ denotes its category of singularities, see \cite{Orlov04}. (In fact, as $q_1, q_2$ and $\pi$ are local complete intersection morphisms, the functors $\RR q_{1*} \, \LL q_{2}^*$ and $\LL \pi^*(\blank) \otimes \sO_{\PP(\sG)}(k)$ admit both left and right adjoints with  finite cohomological amplitudes, and preserve coherence and perfect complexes. Hence \cite[Prop. 1.10]{Orlov04} could be applied.) As a consequence, in the situation of Thm. \ref{cor:projectivization}, $\PP(\sExt^1(\sG,\sO_X))$ is smooth iff $\PP(\sG)$ is.
\end{remark}

\begin{remark}\label{rmk:structure} If we interpret Orlov's blowing-up formula Thm. \ref{thm:blow-up} as:
	$$D(\PP(\sI_Z)) = \langle D(\PP(\sExt^{r-1}(\sI_Z,\sO_X))(1), \ldots, D(\PP(\sExt^{r-1}(\sI_Z,\sO_X)))(r-1),  D(X) \rangle.$$
Then this can be viewed as the ``dual" situation of Thm. \ref{cor:projectivization}. This  duality between the projectivization formula Thm. \ref{cor:projectivization} and Orlov's blowing-up formula Thm. \ref{thm:blow-up}  can be formulated precisely in the framework of homological projective duality (HPD), see  \cite{JL18proj2}. 
\end{remark}

%% Section: more examples
\subsection{More examples}

%%% Section: Springer resolution
\subsubsection{Flops and Springer type resolutions} \label{sec:springer}
If $V$ is a vector space of dimension $d \ge 2$, $X = \End V$, and $\sF = \sE = V \otimes \sO_X$, and $\sigma \in \Hom(\sF,\sE)$ is the tautological section: $\sigma(A) = A$ for $A \in \End V$. Then $X_\sigma = \{ A \in X=\End V \mid \det A = 0\}$ is the determinantal hypersurface, which has (Gorenstein) singularities along higher degeneracy loci. In this case, 
	$$\PP(\sC_\sigma) = \{ (A, [H]) \mid \im A \subset H \} \subset X \times \PP V = X \times \PP_{\mathrm{sub}} (V^*),$$
where $H \subset V$ is a hyperplane, and 
	$$\PP(\sC_{\sigma^\vee}) = \{ (A, [v]) \mid  v  \in  \ker A \} \subset X \times \PP V^* = X \times \PP_{\mathrm{sub}} (V), $$
are two {\em (Springer) crepant resolutions} of $X_\sigma$ (where $v \in V$ is a vector, which corresponds to a hyperplane in $V^*$). Therefore Thm. \ref{thm:duality} states that the two different Springer crepant resolutions $\PP(\sC_\sigma)$ and $\PP(\sC_{\sigma^\vee})$ are derived equivalent. 

\begin{example}If $d=2$, $X_\sigma$ is a singular threefold with an isolated singularity at $0 \in \End V$, and $\PP(\sC_\sigma)$ and $\PP(\sC_{\sigma^\vee})$ are two small crepant resolutions of singularities of $X_\sigma$, with exceptional locus $(-1,-1)$-curves. $\PP(\sC_\sigma) \dashrightarrow \PP(\sC_{\sigma^\vee})$ is the famous {\em Atiyah flop}.
\end{example}

	In general, if $\sF$ and $\sE$ are two vector bundles on a regular scheme $X$ with $\rank \sF= \rank \sE$, $\mathring{X}_{\sigma} \ne \emptyset$, and the condition \eqref{eqn:expdim} of Thm. \ref{thm:duality} holds, i.e. $\dim \PP(\sC_{\sigma}) = \dim \PP(\sC_{\sigma^\vee}) = \dim X -1$. Then $\PP(\sC_{\sigma})$ and $\PP(\sC_{\sigma^\vee})$ are two different partial desingularizations of the determinantal hypersurface $X_\sigma$ defined by the zero locus of the determinant of $\sigma: \sF \to \sE$, and Thm. \ref{thm:duality} implies that $D(\PP(\sC_\sigma)) \simeq D(\PP(\sC_{\sigma^\vee}))$.

%%% Section: Cohen-Macaulay of codimension 2
\subsubsection{Cohen-Macaulay subschemes of codimension $2$} \label{sec:CM2} Let $X$ be a regular (noetherian, finite dimensional, separated) $\kk$-scheme, $\sigma \colon \sF \to \sE$ an injective $\sO_X$-module map between finite locally free sheaves. Set $\rank \sF =f$ and $\rank \sE =e$. If $f = e -1$, and the degeneracy locus $Z:=X_{\sigma} \subseteq X$ has the expected codimension $2$. Then $Z \subseteq X$ is a Cohen-Macaulay subscheme, whose ideal sheaf $\sI_Z$ has depth $2$ and homological dimension $1$. In this case, Buchsbaum-Eisenbud's acyclic criterion \cite{BE} implies that the sequence
	$$ 0 \to \sF \xrightarrow{~~~\sigma~~~} \sE \simeq \wedge^{f} \sE^\vee \otimes \det \sE \xrightarrow{~~\wedge^{f}(\sigma^\vee) \otimes 1~~}  \det \sF^\vee \otimes \det \sE \to (\det \sF^\vee \otimes \det \sE)|_Z \to 0$$
is exact. Therefore $\sC_\sigma \simeq \sI_Z \otimes  \det \sF^\vee \otimes \det \sE$, and $\PP(\sC_\sigma) \simeq \PP(\sI_Z)$. 

Conversely, for a regular scheme $X$, every Cohen--Macaulay subscheme $Z \subset X$ of codimension two arises in this way. More precisely, since $\dim X = 2 + \dim Z = 2 + {\rm depth}_X (\sO_Z)$, by the Auslander--Buchsbaum formula, the $\sO_X$-module $\sO_Z$ has homological dimension $2$. Hence the Hilbert-Burch theorem \cite[Thm. 20.15]{Ei} can be applied; In particular, $\sI_Z$ has homological dimension $1$, and there exists an injective map $\sigma \colon \sF \to \sE$ of finite locally free sheaves, such that $\rank \sF = \rank \sE - 1$ and $\sI_Z \simeq \sC_\sigma$.

Let $Z \subseteq X$ be a Cohen--Macaulay subscheme of codimension $2$. If we assume $\widetilde{Z}:=\PP(\sExt^1(\sI_Z, \sO_X))$ has the expected dimension, i.e. $\dim \widetilde{Z} = \dim X - 2$. (This condition is equivalent to $\codim X^{> i}(\sI_Z) \ge i+1$ for all $i \ge 1$.) Then by \cite[Prop. 3.2]{ES}, 
	$$\pi \colon \PP(\sI_Z) = \Bl_Z X \to X$$
is the {\em blowup} of $X$ along $Z$. On the other hand, $\widetilde{Z}=\PP(\sExt^1(\sI_Z, \sO_X)) \to Z$ is a {\em Springer type partial desingularization} of $Z$, which is an isomorphism over $Z \,\backslash\, X^{> 2}(\sI_Z)$. Notice also that $\widetilde{Z} \simeq \PP(\sExt^1(\sI_Z, \omega_X)) = \PP(\omega_Z)$, where $\omega_X$ and $\omega_Z$ are the dualizing sheaves. 

Thm. \ref{thm:duality}, combined with above discussion, implies:

\begin{corollary} Let $X$ be a regular scheme, and let $Z \subset X$ be a Cohen--Macaulay subscheme of codimension $2$. Assume that the following expected dimension conditions hold:
	$$\dim \PP(\sI_Z) = \dim X, \qquad \dim \widetilde{Z} = \dim X - 2.$$
Then there are semiorthogonal decompositions: 
	$$D(\Bl_Z X) = \langle \pi^* D(X) ,~ D(\widetilde{Z}))\rangle = \langle D(\widetilde{Z}), ~\pi^* D(X) \otimes \sO(1)\rangle.$$
Furthermore, if the expected dimension (\ref{eqn:expdim2}) is satisfied for $\sI_Z$, which is equivalent to 
	$$\codim X^{> i}(\sI_Z) \ge 2i  \qquad i \ge 1.$$ 
Then the Fourier-Mukai kernel of the fully faithful embedding $D(\widetilde{Z})) \hookrightarrow D(\Bl_Z X)$ is given by the structure sheaf of the fiber product $\Bl_Z X \times_X \widetilde{Z}$.
\end{corollary}

\begin{proof} We only need to show the equivalence of these two semiorthogonal decompositions of $D(\Bl_Z X)$. The condition of the corollary implies that $\PP(\sI_Z) \subseteq \PP(\sF)$ is a local complete intersection subscheme of expected codimension. Hence $\pi \colon \PP(\sI_Z) \to X$ is a locally compete intersection morphism, with relative dualizing complex $\omega_\pi^\bullet = \sO_{\PP(\sI_Z)}(1)$. The desired equivalence on the level of perfect complexes follows by  Grothendieck duality. On the other hand, since the functors of Thm. \ref{thm:duality} preserves coherence and have finite cohomological amplitudes, the result on the level of bounded derived categories then follows.
\end{proof}

This result generalizes Orlov's blowup formula \cite{Orlov05}, Thm. \ref{thm:blow-up} to the case of blowup along a possibly singular Cohen-Macaulay subscheme $Z \subset X$ of codimension two. The copy of $D(Z)$ in Orlov's formula Thm. \ref{thm:blow-up} (in the case of when $Z$ is smooth) is now replaced by the derived category $D(\widetilde{Z})$ of a partial desingularization $\widetilde{Z}$ of $Z$. This formula is very useful in the study of derived categories of moduli spaces, see \S \ref{sec:Hilb}, \S \ref{sec:Voisin}.

%%% Section: Symmetric power of curves
\subsubsection{Symmetric powers of curves and $\Theta$-flops% (cf. Toda \cite{Tod2})} 
}\label{sec:SymC}

Let $C$ be a smooth projective curve of genus $g \ge 1$, for $k \in \ZZ$ denote by $C^{(k)}$ the {\em $k$-th symmetric power of $C$} (by convention set $C^{(0)} = {\rm point}$; $C^{(k)} = \emptyset$ for $k <0$), and let $\Pic^k(C)$ be the Picard variety of line bundles of degree $k$ on $C$. For any fixed integer $n \ge 0$, consider the Abel-Jacobi map and its involution:
	$$AJ \colon X_+:=C^{(g-1+n)} \to X:=\Pic^{g-1+n}(C) \quad \text{\and} \quad AJ^\vee \colon X_- :=C^{(g-1-n)} \to X $$
	defined by
 	$$AJ \colon D \mapsto \sO(D) \quad \text{and} \quad AJ^\vee \colon  D \mapsto \sO(K_C - D),$$
where $D \in X_{\pm}$ denotes an effective divisor on $C$, $\sO(D)$ is the associated line bundle, and $K_C$ is the canonical divisor of $C$. The fibre of $AJ$ (resp. $AJ^\vee$) over a point $\sL \in X=\Pic^{g-1+n}(C)$ is the linear system $|\sL| =\PP_{\rm sub}(H^0(C,\sL)) = \PP(H^0(C,\sL)^*)$ (resp. $|\sL^\vee(K_C)| =\PP_{\rm sub}(H^1(C,\sL)^*) = \PP(H^1(C,\sL))$). Consider the fibered diagram:
    \begin{equation}\label{diag:AJ}
    	\begin{tikzcd}[row sep=0.8 em, column sep=2.3 em]
    	  	&\widehat{X}: = X_+ \times_{X} X_-   \ar{ld}[swap]{q_1} \ar{rd}{q_2}& \\
	X_+ = C^{(g-1+n)} \ar{rd}[swap]{AJ}& & X_- = C^{(g-1-n)} \ar{ld}{AJ^\vee} \\
		& X= \Pic^{g-1+n}(C).
	\end{tikzcd}
    \end{equation}
In \cite{Tod2}, Toda realized above diagram as a simple wall-crossing of certain stable pair moduli spaces \cite{PT} on Calabi-Yau threefolds; Thus he deduced that $X_+ \dashrightarrow X_-$ is analytically locally a $d$-critical flip \cite{Tod3}. We show that the diagram (\ref{diag:AJ}) also fits into the framework of projectivization formula Thm. \ref{thm:duality}.

Let $D$ be an effective divisor of large degree on $C$. For every $\sL \in \Pic^{g-1+n}(C)$, the exact sequence
    $0 \to \sL \to \sL(D) \to \sL(D)|_D \to 0$ induces a long exact sequence:
    $$0 \to H^0(C, \sL) \to H^0(C, \sL(D)) \xrightarrow{\mu_{D}} H^0(C, \sL(D)|_D) \to H^1(C, \sL) \to 0.$$
Globalizing (the dual of) above sequence yields the desired picture: let $\sL_{\rm univ}$ be the universal line bundle of degree $g-1+n$ on $C \times X$, and $\pr_C, \pr_X$ be obvious projections, then
	$$\sE := (\RR \pr_{X *} (\pr_{C}^* \sO(D) \otimes \sL_{\rm univ}))^\vee \quad \text{and} \quad \sF := (\RR \pr_{X *} (\pr_{C}^* \sO_D(D) \otimes \sL_{\rm univ}))^\vee$$
are vector bundles on $X$ of ranks $e = \deg(D) + n$ and $f = \deg(D)$,  $\sigma = \mu_D^\vee \colon \sF \to \sE$ is the natural map. Then $X_+ =  \PP(\sC_{\sigma}), X_- = \PP(\sC_{\sigma^\vee})$, and expected dimension conditions (\ref{eqn:expdim}, \ref{eqn:expdim2}) are satisfied (Lem. \ref{lem:app:exp.dim}). Then Thm. \ref{thm:duality} implies:

% corollary
\begin{corollary}[{Toda \cite[Cor. 5.11]{Tod2}}] \label{cor:SymC} In the above situation (\ref{diag:AJ}), for any $n \ge 0$,
	\begin{align*}
	D(C^{(g-1+n)}) & = \big\langle  \RR q_{1*} \, \LL q_{2}^* (D(C^{(g-1-n)})), ~ \\
	 & \LL (AJ)^* (D(\Pic^{g-1+n}(C))) \otimes \sO_{X_+}(1), \ldots, \LL (AJ)^* (D(\Pic^{g-1+n}(C))) \otimes \sO_{X_+}(n)\big\rangle.
	\end{align*}
\end{corollary}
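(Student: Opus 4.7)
My plan is to verify that the global data on $X = \Pic^{g-1+n}(C)$ described just before the corollary places the Abel--Jacobi diagram (\ref{diag:AJ}) into the exact framework of Theorem \ref{thm:duality}(1) (the case $e \ge f$), and then to invoke that theorem. The work is essentially bookkeeping: transcribing the classical geometry of linear systems into the language of projectivized cokernels.

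First, I would fix an effective divisor $D$ on $C$ of sufficiently large degree. For every $\sL \in \Pic^{g-1+n}(C)$ we have $h^1(C, \sL(D)) = 0$ and $h^0(C, \sL(D)) = \deg D + n$ by Riemann--Roch. Cohomology and base change then show that $\sE$ and $\sF$ are locally free on $X$ of ranks $e = \deg D + n$ and $f = \deg D$ respectively, so $e - f = n \ge 0$. Globalizing the fibrewise four-term sequence
$$0 \to H^0(C, \sL) \to H^0(C, \sL(D)) \xrightarrow{\mu_D} H^0(D, \sL(D)|_D) \to H^1(C, \sL) \to 0$$
produces a four-term exact sequence of sheaves on $X$ whose middle arrow is $\sigma^\vee = \mu_D$. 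Its kernel is $\pr_{X*}\sL_{\rm univ}$ and its cokernel is $R^1 \pr_{X*} \sL_{\rm univ}$, so $\sC_\sigma = (\pr_{X*}\sL_{\rm univ})^\vee$ and $\sC_{\sigma^\vee} = R^1 \pr_{X*} \sL_{\rm univ}$.

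Second, I would identify $\PP(\sC_\sigma) \cong X_+$ and $\PP(\sC_{\sigma^\vee}) \cong X_-$ over $X$. Fibrewise, under Grothendieck's convention the 1-dimensional quotients of $\sC_\sigma \otimes k(\sL) = H^0(C, \sL)^\vee$ correspond to 1-dimensional subspaces of $H^0(C, \sL)$, i.e.\ to effective divisors in $|\sL|$, matching $AJ \colon C^{(g-1+n)} \to X$. Likewise Serre duality gives $H^1(C, \sL) \cong H^0(C, K_C - \sL)^\vee$, so the fibres of $\PP(\sC_{\sigma^\vee})$ are $|K_C - \sL|$, matching $AJ^\vee$. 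These isomorphisms are manifestly $X$-linear, and the universal property of $\PP(-)$ upgrades them to isomorphisms of $X$-schemes.

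Third, I would check the three expected-dimension hypotheses of Theorem \ref{thm:duality}. Two of them are numerically trivial:
$$\dim X + (e - f) - 1 = g + n - 1 = \dim C^{(g-1+n)}, \qquad \dim X + (f - e) - 1 = g - n - 1 = \dim C^{(g-1-n)}.$$
The third, $\dim(X_+ \times_X X_-) = g - 1$, is the genuinely geometric input and is the main potential obstacle; I would simply quote Lemma \ref{lem:app:exp.dim} from Appendix \ref{sec:app:SymC}, which handles exactly this dimension count (and the attendant Tor-independence) for the Abel--Jacobi fibre product.

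Finally, with all hypotheses verified, Theorem \ref{thm:duality}(1) applies verbatim and produces the semiorthogonal decomposition
$$D(X_+) = \big\langle \RR q_{1*}\LL q_2^*\, D(X_-),\; \LL\pi^* D(X) \otimes \sO_{X_+}(1),\; \ldots,\; \LL\pi^* D(X) \otimes \sO_{X_+}(n)\big\rangle,$$
where $\pi = AJ$ because the projection $\PP(\sC_\sigma) \to X$ coincides with $AJ$ under the identification of the previous step. This is exactly the statement of the corollary. The main obstacle, as noted, is the fibre-product dimension count in Appendix \ref{sec:app:SymC}; everything else is a direct unpacking of Theorem \ref{thm:duality}.
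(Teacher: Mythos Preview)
Your proposal is correct and follows essentially the same route as the paper: set up $\sE,\sF,\sigma$ on $\Pic^{g-1+n}(C)$, identify $\PP(\sC_\sigma)\cong C^{(g-1+n)}$ and $\PP(\sC_{\sigma^\vee})\cong C^{(g-1-n)}$ via the fibrewise four-term sequence, invoke Lemma~\ref{lem:app:exp.dim} for the fibre-product dimension, and apply Theorem~\ref{thm:duality}(1). The paper in fact gives less detail than you do; the only caveat is that your sheaf-level identity $\sC_\sigma=(\pr_{X*}\sL_{\rm univ})^\vee$ should be read as a fibrewise statement (dualizing the four-term sequence is not exact where $\pr_{X*}\sL_{\rm univ}$ fails to be locally free), but this is immaterial since only the projectivization is needed.
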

In the case $n=0$, this yields the derived equivalence $D(X_+) \simeq D(X_-)$ of the {\em $\Theta$-flop} $X_+ =C^{(g-1)} \dashrightarrow X_- =C^{(g-1)}$ from two different resolutions $X_{\pm}$ of the theta-divisor $\Theta: = \{\sL \mid H^0(X,\sL) \ne 0\} \subset \Pic^{g-1}(C) = X$. See \S \ref{sec:Theta} for more results on $\Theta$-flops.

%%% Section: nested Hilbert schemes

\subsubsection{Nested Hilbert schemes %after Belmans--Krug \cite{BK19}
}\label{sec:Hilb} The following application is discovered by Belmans--Krug \cite{BK19}; We include here for readers' conveniences, and provide more details about the kernels.

Let $S$ be a smooth surface, denote $\Hilb_n=\Hilb_n(S)$ the Hilbert scheme of $n$-points on $S$, i.e. it parametrizes colength $n$ ideals $I_{n} \subset \sO_S$. Denote the {\em nested} Hilbert schemes by: %or equivalently length $n$ zero dimension subschemes $\zeta_n=V(S/{I_n})$.
	\begin{align*}
	\Hilb_{n,n+1} & = \{(I_{n+1} \subset I_n)  \mid I_{n+1}/I_{n} \simeq \kk(x),  \text{~for some $x \in S$}\} \subset \Hilb_n \times \Hilb_{n+1},
	\\ \Hilb_{n-1,n,n+1} & = \{I_{n+1} \subset I_n \subset I_{n-1} \mid  I_{n+1}/I_{n} \simeq \kk(x),  I_{n}/I_{n-1} \simeq \kk(x), \text{~for some $x \in S$} \}.
	\end{align*}
Let $X = \Hilb_n(S) \times S$, and $Z_n \subset X$ be the universal subscheme of length $n$. Then $X$ is smooth and $Z_n \subset X$ is Cohen--Macaulay of codimension $2$. Results of \S \ref{sec:CM2} can be applied. 
\begin{equation}\label{diag:Hilb}
    	\begin{tikzcd}[row sep=0.7 em, column sep=3 em]
    	  	&\Hilb_{n-1,n,n+1}(S) \ar{ld}[swap]{q_{-}} \ar{rd}{q_{+}}& \\
	 \Hilb_{n-1,n}(S)   \ar{rd}[swap]{\pi_{-}}& & \Hilb_{n,n+1}(S) \ar{ld}{\pi_{+}} \\
		& X= \Hilb_n(S) \times S.
	\end{tikzcd}
    \end{equation}   

The following properties are summarized from Ellingsrud--Str{\o}mme \cite{ES}, Negu{\c{t}} \cite{Neg} and Maulik--Negu{\c{t}} \cite{MN}:

\begin{lemma} \label{lem:Hilb} 
\begin{enumerate}
	\item \label{lem:Hilb-1} $\Hilb_{n,n+1}(S) = \PP(\sI_{Z_n}) = \Bl_{Z_n} (X)$ is smooth of dimension $2n+2$;
	\item  \label{lem:Hilb-2} $ \Hilb_{n-1,n}(S) = \PP(\sExt^1(\sI_{Z_n},\sO_X)) = \PP(\omega_{Z_n})$ is smooth of dimension $2n$;
	\item  \label{lem:Hilb-3} $\Hilb_{n-1,n,n+1}(S) = \Hilb_{n-1,n}(S) \times_{X} \Hilb_{n,n+1}(S)$ is smooth of dimension $2n+1$.
\end{enumerate}
\end{lemma}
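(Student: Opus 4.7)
The plan is to translate each of the three claims into the framework of \S\ref{sec:CM2} by giving an explicit modular description of the nested Hilbert schemes as projectivizations of sheaves over $X = \Hilb_n(S) \times S$. The key geometric input is that $Z_n \subset X$ is a codimension-two Cohen--Macaulay subscheme of a smooth scheme, which is precisely the Hilbert--Burch setting of \S\ref{sec:CM2}.

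For (1), I would use that a point of $\Hilb_{n,n+1}(S)$ is the data of $(I_n,x) \in X$ together with a surjection $I_n \twoheadrightarrow k(x)$, equivalently a one-dimensional quotient of $\sI_{Z_n}$ at $(I_n,x)$. This gives $\Hilb_{n,n+1}(S) = \PP_X(\sI_{Z_n})$, and the Hilbert--Burch discussion of \S\ref{sec:CM2} applied to the codimension-two CM subscheme $Z_n \subset X$ then identifies this projectivization with the blowup $\Bl_{Z_n}X$. The dimension $2n+2$ is simply $\dim X$, and smoothness is classical (Fogarty, Cheah, Ellingsrud--Str{\o}mme \cite{ES}).

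For (2), I would give the parallel modular description: over $(I_n,x) \in X$ the fiber of $\Hilb_{n-1,n}(S) \to X$ parametrizes ideals $I_{n-1} \supset I_n$ with quotient $k(x)$, equivalently nonzero inclusions $k(x) \hookrightarrow \sO_{Z_n}$ modulo scalar. Globally these are organized by $\sExt^1(\sI_{Z_n},\sO_X)$: the short exact sequence $0 \to \sI_{Z_n} \to \sO_X \to \sO_{Z_n} \to 0$ yields $\sExt^1(\sI_{Z_n},\sO_X) \simeq \sExt^2(\sO_{Z_n},\sO_X)$, which by Grothendieck duality for the codimension-two CM embedding $Z_n \hookrightarrow X$ equals $\omega_{Z_n} \otimes \omega_X^{-1}|_{Z_n}$; this has the same projectivization as $\omega_{Z_n}$. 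Smoothness and dimension $2n$ again follow from \cite{ES}, and the picture matches the Springer-type desingularization $\widetilde{Z}$ of \S\ref{sec:CM2}.

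For (3), the universal property of nested Hilbert schemes shows that a point of $\Hilb_{n-1,n,n+1}(S)$ is uniquely determined by a pair of flags $(I_{n-1} \supset I_n)$ and $(I_n \supset I_{n+1})$ sharing the same middle ideal $I_n$ and the same quotient support $x$; the two projections therefore agree after passing to $(I_n,x) \in X$, giving the canonical fiber-product structure. The main obstacle will be smoothness and the dimension count $2n+1$: this is \emph{not} formal from the fiber product description, since the two maps to $X$ have very different generic fiber dimensions (one is a blowup of $Z_n$, the other factors through $Z_n$). The value $2n+1 = \dim X - 1$ is, however, exactly the expected dimension in the $e-f=1$ regime of Thm. \ref{thm:duality}, and the smoothness assertion ultimately reduces to Negu{\c{t}}'s local coordinate analysis of the triple nested Hilbert scheme \cite{Neg}.
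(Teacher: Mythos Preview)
Your proposal is correct and in fact supplies considerably more detail than the paper itself, which does not prove this lemma at all: the paper simply states that these properties are ``summarized from Ellingsrud--Str{\o}mme \cite{ES} and Negu{\c{t}} \cite{Neg}'' and moves on. Your modular descriptions in (1) and (2), the identification $\sExt^1(\sI_{Z_n},\sO_X) \simeq \sExt^2(\sO_{Z_n},\sO_X) \simeq \omega_{Z_n}\otimes\omega_X^{-1}|_{Z_n}$ via Grothendieck duality, and the fiber-product argument in (3) are exactly how one extracts the statement from those references and ties it into the Hilbert--Burch framework of \S\ref{sec:CM2}. Your caveat that smoothness and the dimension $2n+1$ in (3) are not formal and require Negu{\c{t}}'s explicit local charts is well placed; that is indeed where the substance lies.
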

\begin{proof} The equalities $\Hilb_{n,n+1}(S) = \PP(\sI_{Z_n})$ and $ \Hilb_{n-1,n}(S)  = \PP(\sExt^1(\sI_{Z_n},\sO_X)) = \PP(\omega_{Z_n})$ follow from \cite{ES}; \eqref{lem:Hilb-1} and \eqref{lem:Hilb-2} follow from \cite[Prop. 6.3]{MN}; \eqref{lem:Hilb-3} is \cite[Prop. 6.8]{MN}.
\end{proof}

Therefore conditions (\ref{eqn:expdim}, \ref{eqn:expdim2}) of Thm. \ref{thm:duality} are satisfied, and our results imply:

\begin{corollary}[cf. Belmans--Krug \cite{BK19}]
 \begin{enumerate}
 \item $\LL \pi_{+}^*$ and $ \RR q_{+*} \LL q_{-}^*$ are fully faithful, and:
\begin{align*} 
	D(\Hilb_{n,n+1}(S))  &= \big \langle \LL \pi_{+}^* D(\Hilb_n(S) \times S), ~~ \RR q_{+*} \LL q_{-}^* D(\Hilb_{n-1,n}(S))  \big \rangle \\
	& = \big \langle \RR q_{+*} \LL q_{-}^* D(\Hilb_{n-1,n}(S)),  ~~ \LL \pi_{+}^* D(\Hilb_n(S) \times S)  \otimes \sL_n \big \rangle,
\end{align*}
where $\sL_n = \sI_{n}/\sI_{n+1} = \sO_{\PP(\sI_{Z_n})}(1)$ is the tautological line bundle on $\Hilb_{n,n+1}$.
\item For an integer $d \in [0,n]$, consider the diagram of the following form:
	\begin{equation*}
	\begin{tikzcd}[row sep=0.7 em, column sep=0 em]
	  & \Hilb_{n-d,n-d+1,n-d+2}  \ar{ld}{q_-} \ar{rd}[swap]{q_+} &  & \cdots \ar{ld}{q_-} \ar{rd}[swap]{q_+} &  & \Hilb_{n-1,n,n+1} \ar{ld}{q_-} \ar{rd}[swap]{q_+}  & \\
	\Hilb_{n-d,n-d+1} & & \Hilb_{n-d+1,n-d+2} & & \Hilb_{n-1,n} & & \Hilb_{n,n+1}.
	\end{tikzcd}
	\end{equation*}
Then the following functors are fully faithful:
	\begin{align*}
	\underbrace{\RR q_{+*} \LL q_{-}^* \cdots \RR q_{+*} \LL q_{-}^*}_{\text{$d$ times}} \colon D(\Hilb_{n-d,n-d+1}) \hookrightarrow D(\Hilb_{n,n+1}), \\
	\underbrace{\RR q_{+*} \LL q_{-}^* \cdots \RR q_{+*} \LL q_{-}^* }_{\text{$d$ times}}\LL \pi_{+}^* \colon D(S \times \Hilb_{n-d})  \hookrightarrow D(\Hilb_{n,n+1}).
	\end{align*}
\item The functors in $(2)$ for $d=0,\ldots, n$ induce a semiorthogonal decomposition:
	\begin{align*} D(\Hilb_{n,n+1})  \simeq   \langle D(\Hilb_n \times S) , D(\Hilb_{n-1}(S) \times S)),   \ldots,   D(S \times S), D(S)  \rangle.
					\end{align*}
\end{enumerate}
\end{corollary}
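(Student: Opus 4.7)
The plan is to apply Theorem~\ref{thm:duality} to the sheaf $\sG = \sI_{Z_n}$ on $X = \Hilb_n(S) \times S$. Since $Z_n$ is Cohen--Macaulay of pure codimension $2$, the ideal sheaf $\sI_{Z_n}$ admits Zariski-locally a two-step locally-free resolution $\sF \to \sE$ with $e = f+1$, placing us exactly in the codimension-two blow-up setting of \S\ref{sec:CM2}, with generic rank $r = \rank \sI_{Z_n} = 1$. The lemma preceding the corollary records $\dim \Hilb_{n,n+1} = \dim X$, $\dim \Hilb_{n-1,n} = \dim X - 2$, and $\dim \Hilb_{n-1,n,n+1} = \dim X - 1$, which match the expected-dimension conditions (\ref{eqn:expdim}) and (\ref{eqn:expdim2}). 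Theorem~\ref{thm:duality} then yields (1) verbatim, with the tautological quotient $\sL_n = \sO_{\PP(\sI_{Z_n})}(1) = \sI_n/\sI_{n+1}$ supplying the twist in the second decomposition.

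Part (2) follows by iterating (1). Everything in the preceding paragraph holds with $n$ replaced by any $m \in [0,n]$, so the fibered diagram~(\ref{diag:Hilb}) at level $m$ produces fully faithful functors $\RR q_{+*}\LL q_-^* \colon D(\Hilb_{m-1,m}) \hookrightarrow D(\Hilb_{m,m+1})$ and $\LL \pi_+^* \colon D(\Hilb_m(S) \times S) \hookrightarrow D(\Hilb_{m,m+1})$. Composing $d$ copies of the first (and optionally prepending $\LL\pi_+^*$) produces the two families of functors asserted in (2), and fully-faithfulness is preserved under composition.

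For (3), I would induct on $n$, the base case $n = 0$ being trivial since $\Hilb_{0,1}(S) \simeq S$. Given the inductive decomposition of $D(\Hilb_{n-1,n})$ into $n$ components of the claimed form, I substitute it into the right-hand factor of the first SOD in (1),
\[ D(\Hilb_{n,n+1}) = \bigl\langle \LL \pi_+^* D(\Hilb_n \times S),\ \RR q_{+*} \LL q_-^* D(\Hilb_{n-1,n}) \bigr\rangle, \]
and push through by $\RR q_{+*}\LL q_-^*$. Since $\RR q_{+*}\LL q_-^*$ is fully faithful it preserves the semiorthogonal structure of its source, yielding an $(n+1)$-term decomposition whose components are exactly the images of the ``last functors in (2)'', appearing in the stated order.

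The main technical point is bookkeeping: one must verify at each iteration that the image of $\RR q_{+*}\LL q_-^*$ applied to a semiorthogonal collection inside $D(\Hilb_{n-1,n})$ remains semiorthogonal to $\LL \pi_+^* D(\Hilb_n(S) \times S)$ inside $D(\Hilb_{n,n+1})$. This is automatic from the semiorthogonality in (1), together with the $X$-linearity of the decompositions of Theorem~\ref{thm:duality} (cf.~\cite{Kuz11Bas}) and base change along the Tor-independent square $\Hilb_{n-1,n,n+1} = \Hilb_{n-1,n} \times_X \Hilb_{n,n+1}$ furnished by the expected-dimension condition (\ref{eqn:expdim2}). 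Once Theorem~\ref{thm:duality} is in hand, the entire corollary reduces to a straightforward induction requiring no further geometric input.
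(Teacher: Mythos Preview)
Your proposal is correct and matches the paper's approach: the paper simply notes that the lemma verifies the expected-dimension conditions (\ref{eqn:expdim}) and (\ref{eqn:expdim2}), so that the codimension-two blow-up formula of \S\ref{sec:CM2} (i.e.\ Theorem~\ref{thm:duality}/Corollary~\ref{cor:projectivization} with $r=1$) yields (1), and then (2)--(3) follow by the obvious iteration/induction you describe. The only minor quibble is that you should invoke Corollary~\ref{cor:projectivization} rather than Theorem~\ref{thm:duality} directly, since $\sI_{Z_n}$ need not admit a \emph{global} two-step resolution.
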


%Since the fiber squares $\Hilb_{n-2,n-1,n,n+1} = \Hilb_{n-2,n-1,n} \times_{ \Hilb_{n-1,n}} \Hilb_{n-1,n,n+1}$ are Tor-independent (see \cite[]{MN}), the $2$-step Zig-Zag functors $\RR q_{+*} \LL q_{-}^* \RR q_{+*} \LL q_{-}^*$ can be realized as a single pullback and pushforward by the fiber product. %Similar result should also hold for $d \ge 3$.

%%% Section: nested Hilbert schemes

\subsubsection{Voisin maps}\label{sec:Voisin} 

Let $Y$ be a smooth cubic fourfold not containing any plane, $F(Y)$ be the Fano variety of lines on $Y$, and $Z(Y)$ be LLSvS eightfold \cite{LLSVS17}.  Voisin \cite{Voi16} constructed a rational map $v \colon F(Y) \times F(Y) \dashrightarrow Z(Y)$ of degree six, Chen \cite{Chen18} showed that the Voisin map $v$ can be resolved by blowing up along the incidence locus 
	$$Z = \{(L_1,L_2) \in F(Y) \times F(Y) \mid L_1 \cap L_2 \ne \emptyset\}.$$ 
More precisely, Chen in \cite{Chen18} shows the following:
	\begin{enumerate}
		\item The incident locus $Z \subset X :=F(Y) \times F(Y)$ is Cohen--Macaulay of codimension $2$, and the blowing up variety $\PP(\sI_Z) = \Bl_Z ( F(Y) \times F(Y))$ is a natural relative $Quot$ scheme over $Z(Y)$ if $Y$ is very general.
		\item The degeneracy loci of $\sI_Z$ over $X$ are given by ($X = X^{> 0}(\sI_Z)$, and)
			$$X^{> 1}(\sI_Z)=Z,  \quad X^{> 2}(\sI_Z)=\Delta_2, \quad X^{> i}(\sI_Z) = \emptyset ~\text{for}~ i \ge 3.$$ 
		Here $\Delta_2  = \{L \in \Delta \simeq F(Y) \mid \sN_{L/Y} \simeq \sO(1)^{\oplus 2} \oplus \sO(-1)\}$ is the type II locus, which is an algebraic surface (hence has the expected dimension).
		\item $\sExt^1(\sI_Z,\sO_X) = \omega_Z$, where $\omega_Z$ is the dualizing sheaf of $Z$.
	\end{enumerate}

Therefore the conditions of Cor. \ref{cor:projectivization} is satisfied, and projectivization formula implies:
\begin{corollary}
There is a semiorthogonal decomposition:
	\begin{align*}
	D(\Bl_Z ( F(Y) \times F(Y))) & = \langle D(F(Y) \times F(Y)), D(\widetilde{Z})\rangle  \\
	&=   \langle D(\widetilde{Z}), D(F(Y) \times F(Y))\otimes \sO(1) \rangle,
	\end{align*}
where $\widetilde{Z} = \PP(\omega_Z)$ is a Springer type partial desingularization of the incidence locus $Z$, which is an isomorphism over $Z \backslash \Delta_2$, and a $\PP^1$-bundle over the type II locus $\Delta_2$.
\end{corollary}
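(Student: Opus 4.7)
The plan is to apply the projectivization formula (Cor.~\ref{cor:projectivization}) with $\sG = \sI_Z$, viewed as a coherent sheaf of rank one on the smooth variety $X := F(Y) \times F(Y)$. Since $Z$ is Cohen--Macaulay of codimension two in $X$, the Hilbert--Burch theorem (as invoked in \S\ref{sec:CM2}) guarantees that $\sI_Z$ admits Zariski-local two-step free resolutions $\sF \to \sE \twoheadrightarrow \sI_Z$ with $\rank\sE = \rank\sF + 1$; under such a presentation one has $\PP(\sI_Z) \simeq \Bl_Z X$ globally, and Chen's identification $\sExt^1(\sI_Z,\sO_X) \simeq \omega_Z$ gives $\PP(\sExt^1(\sI_Z,\sO_X)) = \PP(\omega_Z) = \widetilde{Z}$.

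Next I verify the codimension hypothesis of Cor.~\ref{cor:projectivization}, which with $r=1$ reads $\codim_X X^{>k}(\sI_Z) \ge 2k-1$ for all $k\ge 1$. From Chen's stratification, $X^{>1}(\sI_Z) = Z$ has codimension $2$; the stratum $X^{>2}(\sI_Z) = \Delta_2$ is a surface in the eightfold $X$, hence has codimension $6$; and $X^{>k}(\sI_Z)$ is empty for $k\ge 3$. The required inequalities $2\ge 1$ and $6 \ge 3$ hold, so $\PP(\sI_Z)$, $\widetilde{Z}$, and $\PP(\sI_Z)\times_X \widetilde{Z}$ all have the expected dimensions, and Cor.~\ref{cor:projectivization} applies.

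It then produces directly the second decomposition
$$D(\Bl_Z X) = \big\langle D(\widetilde{Z}),\ \LL\pi^* D(X) \otimes \sO(1) \big\rangle,$$
where $\pi\colon \Bl_Z X \to X$ is the blowdown; the first decomposition follows by left-mutating the right-hand piece past $D(\widetilde{Z})$ as in Lem.~\ref{lem:mut:Bl} and the general CM codimension two discussion of \S\ref{sec:CM2}. The stated fibre structure of $\widetilde{Z} \to Z$ is then the Springer-type description recorded immediately after Cor.~\ref{cor:projectivization}: the map is an isomorphism over $Z \setminus X^{>2}(\sI_Z) = Z \setminus \Delta_2$ and a $\PP^k$-bundle over $X^{>k+1}(\sI_Z) \setminus X^{>k+2}(\sI_Z)$, and only $k=1$ contributes, giving a $\PP^1$-bundle over $\Delta_2$. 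The only step requiring care is the passage from the local Hilbert--Burch presentation to the global statement: locally $\sC_\sigma \simeq \sI_Z \otimes \det\sF^\vee \otimes \det\sE$, so the identifications $\PP(\sC_\sigma) = \PP(\sI_Z)$ differ by a pullback line bundle twist, and one must check these twists cohere globally. This is exactly the content that makes Cor.~\ref{cor:projectivization} glue from the local Thm.~\ref{thm:duality}, so no new obstacle arises here.
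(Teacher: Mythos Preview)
Your proof is correct and follows exactly the paper's approach: verify via Chen's results that the hypotheses of Cor.~\ref{cor:projectivization} (equivalently, the Cohen--Macaulay codimension-two discussion of \S\ref{sec:CM2}) are met, then read off both decompositions. Two cosmetic corrections: with $r=1$ the codimension bound from Cor.~\ref{cor:projectivization} is $\codim_X X^{>k}(\sI_Z) \ge r+2k-1 = 2k$, not $2k-1$ (your numbers $2$ and $6$ still clear this), and the passage between the two decompositions is already recorded in \S\ref{sec:CM2} rather than via Lem.~\ref{lem:mut:Bl}, which as stated is for smooth centers.
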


In particular this implies $\widetilde{Z}$ is smooth if and only if $\Bl_Z ( F(Y) \times F(Y))$ is. See \cite{J19} for corresponding relations on Chow groups and rational Hodge structures.

%%%% Section: autoequivalence %%%
\section{Autoequivalences} \label{sec:auto}
In the local situation of projectivization formula (see Thm \ref{thm:duality}), we assume that $e=f=:r$ and $\pX  := \PP(\sC_\sigma)$ and $\tX : = \PP(\sC_{\sigma^\vee})$ are smooth, and the expected dimension condition
	$$\dim \pX = \dim \tX = \dim \pX \times_X \tX  = \dim X -1$$
is satisfied. Then $\pX \dashrightarrow \tX$ is a flop obtained by two crepant resolutions of the degeneracy locus $X_\sigma$ of Springer type (\S \ref{sec:springer}). The projectivization formula (Theorem \ref{thm:duality}) implies that there is an equivalence of categories induced by the functor $\RR q_{2*} \, \LL q_1^{*}: D(\pX) \simeq D(\tX)$, where $q_1$, $q_2$ are the birational maps in the diagram:
	$$ 
	\begin{tikzcd}
		& X_\sigma^+ \times_X \tX  \ar{ld}[swap]{q_1} \ar{rd}{q_2}& \\
		\pX \ar[dashed]{rr}& & \tX
	\end{tikzcd}
	$$
In this section, we show that there is a sequence of equivalences $\{\Phi_k\}_{k \in \ZZ}$ given by:
	\begin{equation}\label{eqn:Phi}\Phi_k: D(X_\sigma^+) \to D(\tX), \quad \shE^\bullet \mapsto \RR q_{2*} \, (\LL q_1^{*}(\shE^\bullet) \otimes \sO(k,k)), \qquad k \in \ZZ.\end{equation}
In particular, $\Phi_0 = \RR q_{2*} \, \LL q_1^{*}$. Moreover, the members of $\{\Phi_k\}_{k \in \ZZ}$ are connected with each other by spherical twists. Here $\sO(a,b)$ denotes the restriction of the line bundle $\sO_{\PP(\sE)}(a) \boxtimes_X \sO_{\PP(\sF^\vee)}(b)$ to $X_\sigma^+ \times_X \tX$, $a,b \in \ZZ$.
	
%%% Theorem
\begin{theorem}[``Flop--flop=twist"']\label{thm:flop} In the above situation, (i) for each $k \in \ZZ$, the functors $\Phi_k$ defined in (\ref{eqn:Phi}) is an equivalence of categories $\Phi_k: D(X_\sigma^+) \xrightarrow{\sim} D(\tX)$. (ii) The equivalences $\{\Phi_k\}_{k \in \ZZ}$ are connected to each other by:
	$$\Phi_{k-1}^{-1} \circ \Phi_{k} = T^{-1}_{S_{-k}}[2] ,\qquad \Phi_{k}^{-1} \circ \Phi_{k-1} = T_{S_{-k}}[-2],$$
where %$J_{k+r} = S_{-k}$, 
for each $k \in \ZZ$, $S_{k}: D(X) \to D(\pX)$ is a spherical functor defined by:
	%$$J_k(\shE^\bullet) = \LL \pi^{+*} (\shE^\bullet) \otimes \sO_{\PP(\sE)}(1-k) \otimes \det \sF,$$
	%$$J_k(\shE^\bullet) = \LL \pi^{+*} (\shE^\bullet) \otimes \sO_{\PP(\sE)}(r-k),$$
	$$S_k(\blank) = \LL \pi^{+*} (\blank) \otimes \sO_{\PP(\sE)}(k),\qquad k \in \ZZ,$$
(where $\pi^{+}: \pX \to X$ is the projection), and $T_{S_k} \in {\rm Autoeq} (D(\pX))$ is the twist functor around the spherical functor $S_k$.  (iii) In particular, the ``flop--flop'" functor equals to 
	$$ \RR q_{1*} \, \LL q_2^{*} \,\, \RR q_{2*} \, \LL q_1^{*}  = T_{S_{0}}^{-1} \circ \otimes \sL^\vee [2],$$
where $\sL = \det \sF^\vee \otimes \det \sE$. (Note that tensoring with $\sL^\vee$ commutes with $T_{S_k}^{-1}$.)
\end{theorem}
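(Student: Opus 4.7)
By the projection formula $\Phi_k(\blank) = \Phi_0(\blank\otimes \sO_{\pX}(k))\otimes \sO_{\tX}(k)$, so part~(i) reduces to showing that $\Phi_0$ is an equivalence. This is the case $e=f=r$ of Theorem~\ref{thm:duality}: the $D(X)(k)$--factors disappear from the semiorthogonal decomposition and $\Phi_\shP=\RR q_{1*}\LL q_2^*\colon D(\tX)\xrightarrow{\sim} D(\pX)$ is an equivalence; the $\sigma\leftrightarrow\sigma^\vee$--symmetric argument then gives the same for $\Phi_0=\RR q_{2*}\LL q_1^*$.

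For part~(ii), a short projection--formula computation gives $\Phi_{k-1}^{-1}\circ \Phi_k = (\blank\otimes \sO_{\pX}(1-k))\circ \Psi\circ (\blank\otimes \sO_{\pX}(k))$, where $\Psi := \Phi_0^{-1}\circ (\blank\otimes \sO_{\tX}(1))\circ \Phi_0$ is independent of $k$. Since $S_k = (\blank\otimes \sO_{\pX}(k))\circ S_0$ implies $T_{S_k}=(\blank\otimes \sO_{\pX}(k))\circ T_{S_0}\circ (\blank\otimes \sO_{\pX}(-k))$, the claim of~(ii) reduces to the case $k=0$, namely $\Phi_{-1}^{-1}\circ \Phi_0 = T_{S_0}^{-1}[2]$. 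To establish this I plan to lift the computation to the common Cayley's trick space $\shH\subset \PP(\sE)\times_X\PP(\sF^\vee)$ from the proof of Theorem~\ref{thm:duality}: up to a fixed line bundle twist and homological shift, $\Phi_k = \Phi_{\shE_1}^L\circ (\blank\otimes \sO_\shH(k,k))\circ \Phi_{\shE_2}$, where $\Phi_{\shE_1},\Phi_{\shE_2}$ are the two Cayley embeddings of $D(\pX),D(\tX)$ into $D(\shH)$. The autoequivalence $\blank\otimes \sO_\shH(1,1)$ of $D(\shH)$ is then conjugated, via these embeddings and the rotation Lemma~\ref{lem:Rot:Hyp} (whose $[2]$--shift appears in crossing a $(1,1)$--divisor), into the desired spherical twist on $D(\pX)$.

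The essential input is that $S_0 = \LL\pi^{+*}\colon D(X)\to D(\pX)$ is a spherical functor in the sense of Definition~\ref{def:spherical}. I would verify this by computing the cotwist $C_{S_0}$ via Grothendieck duality for the Springer--type projection $\pi^+\colon \pX\to X$: using the Koszul resolution of the diagonal inside the fibre product $\pX\times_X\pX$ (itself a Springer--type scheme) to reduce $S_0^R S_0$ to a form from which the cotwist is a line bundle twist together with a fixed shift, and similarly checking $S_0^R\simeq C_{S_0}\circ S_0^L[1]$. Part~(iii) is then a direct corollary of (ii): comparing $\Phi_\shP=\RR q_{1*}\LL q_2^*$ with $\Phi_0^{-1}$ on $\hat X=\pX\times_X\tX$ supplies a relative--dualizing--sheaf correction that works out to $\otimes \sL^\vee[2]$ with $\sL=\det\sF^\vee\otimes\det\sE$, and tensoring by $\sL^\vee$ (pulled back from $X$) commutes with $T_{S_0}$. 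The principal obstacle is the sphericity verification for $S_0$: $\pi^+$ is only a $\PP^{r-1}$--bundle on the open locus $X\setminus X_\sigma$ and has higher--dimensional fibres over the deeper strata of $X_\sigma$, so the naive projective--bundle computation of $S_0^R S_0$ does not apply, and one has to control the global corrections coming from the exceptional fibres --- this is where the Springer geometry of $\pX$ enters the argument in an essential way.
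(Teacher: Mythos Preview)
Your overall plan matches the paper's: lift to the common Cayley space $\shH$, use the rotation lemma (Lemma~\ref{lem:Rot:Hyp}) to identify $\otimes\,\sO_\shH(1,1)$ with a right mutation up to $[2]$, and then pass from mutation to twist. Two points deserve correction or sharpening.

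\textbf{The geometry of $\pi^+$.} You write that $\pi^+$ is a $\PP^{r-1}$-bundle over $X\setminus X_\sigma$. This is false: since $e=f$, the sheaf $\sC_\sigma$ has rank~$0$, and $\pX=\PP(\sC_\sigma)$ sits over the hypersurface $X_\sigma\subset X$; the fibres of $\pi^+$ over $X\setminus X_\sigma$ are \emph{empty}. Over $X_\sigma^{\mathrm{sm}}$ the map is an isomorphism, over the next stratum a $\PP^1$-bundle, and so on. Fortunately this misconception does not damage the argument, because the sphericity of $S_0$ is much simpler than you suggest. There is no need to work on $\pX\times_X\pX$ or to ``control corrections from exceptional fibres'': by the projection formula $S_0^R S_0(\blank)=(\blank)\otimes \RR\pi^+_*\sO_{\pX}$, and $\RR\pi^+_*\sO_{\pX}$ is computed \emph{globally} on $\PP(\sE)$ via the Koszul resolution of $i_*\sO_{\pX}$ by $\wedge^\bullet(a^*\sF)\otimes\sO_{\PP(\sE)}(-\bullet)$. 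Pushing this down by $a$, only the two extreme terms survive and one gets a triangle $\sO_X\to\RR\pi^+_*\sO_{\pX}\to\sL^\vee[1]$, so the cotwist is $\otimes\,\sL^\vee$; the relation $S_0^R\simeq C_{S_0}S_0^L[1]$ is then a one-line check from $\omega_{\pi^+}=\pi^{+*}\sL[-1]$. This is the content of the paper's Lemma~\ref{lem:spherical_stratified}.

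\textbf{The mutation-versus-twist step.} Your description of~(ii) stops at ``conjugate $\otimes\,\sO_\shH(1,1)$ via the embeddings and the rotation lemma into the spherical twist''. The rotation lemma only gives $\sigma|_{I_1(\sD_1)}=\bR_{\shE(*,1)}[2]$, so after applying $I_2^*$ one has $\Psi_1=I_2^*\circ\bR_{\shE(0,1)}\circ I_1[2]$ (the terms $\shE(\alpha,1)$ with $\alpha\ge 1$ being killed by $I_2^*$). What remains, and what your plan does not address, is to compare the \emph{mutation} $\bR_{\shE(0,1)}$ on $D(\shH)$ with the \emph{dual twist} $T'_{J_1}$ on $\sD_1$, where $J_1=I_1^!\circ E_1$ and $E_1\colon\shE(0,1)\hookrightarrow D(\shH)$. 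The paper does this by an octahedral-type diagram: the difference $\mathrm{cone}\big(I_1\,T'_{J_1}\to \bR_{\shE(0,1)}\circ I_1\big)$ lies in the image of $\mathrm{cone}(I_1 I_1^!\to\Id)$ applied to $\shE(0,1)$, and a direct mutation computation (Lemma~\ref{lem:app:cone}) shows this lands in the ``staircase'' region $\langle\shE(\alpha,\beta)\rangle_{1\le\alpha\le r-1,\;\alpha+1-r\le\beta\le 0}\subset {}^\perp\sD_2$, hence is annihilated by $I_2^*$. This is the step where the chess-game bookkeeping is essential and should be made explicit in your write-up.
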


This result compares nicely with ``flop--flop = twist" result for the Bondal and Orlov's flopping equivalence for Aityah flops, and the more general  ``flop--flop = twist" results for standard flops of Addington, Donovan and Meachan \cite[Theorem A]{ADM}. Furthermore, the theorem implies that the diagram of categories
  \begin{equation*}
      \xymatrix{
D(X_\sigma^+) \ar@<-.7ex>[r]_{\LL q^*_1}& D(X_0)
\ar@<-.7ex>[l]_{\RR q_{1*}}
\ar@<.7ex>[r]^{\RR q_{2*}}& D(\tX)
\ar@<.7ex>[l]^{\LL q^*_2}   
}
\end{equation*} 
represents a perverse schober on $\CC$, in the sense of \cite{KS,BKS}, where $X_0 : = X_\sigma^+ \times_X \tX$.

%%% Introducing functors
\begin{proof}
Assume we are in the situation of Thm. \ref{thm:duality}, and all maps are indicated in the diagram (\ref{diagram}). The following simple computations from Rmk. \ref{rmk:N_j} will be useful later:
	\begin{align*}
	&\omega_{p_1} = \det \sF^\vee \otimes \sO(0,-r),  & \omega_{p_2} = \det \sE  \otimes \sO(-r,0),  \\
	& \omega_{j_1} = \det \sF^\vee \otimes \sO(r-1,-1), & \omega_{j_2} = \det \sE \otimes \sO(-1,r-1).
	\end{align*}

%Before going into the proof, it is helpful to introduce certain notations. 
For simplicity of notations, in the rest of the proof, all functors are assumed to be {\em derived} but written as underived. Denote $I_1 := j_{1*} \, p_1^* : D(\pX) \hookrightarrow D(\shH)$, $I_2 = j_{2*} \, p_2^*: D(\tX) \hookrightarrow D(\shH)$ the inclusions, and denote $\Psi_k :  D(\pX) \to D(\tX)$ the functors for $k \in \ZZ$:
	$$\Psi_k(\blank) : = I_2^*(I_1(\blank)  \otimes \sO_{\shH}(k,k)) = p_{2!} \, j_2^* ( \sO_{\shH}(k,k) \otimes  j_{1*} \, p_1^*(\blank)).$$
%which will be proved to be equivalences. 
From $p_{2!}(\blank) = p_{2*} (\blank \otimes \sO(-r,0) \otimes p_2^* \det \sE [r-1])$, we obtain the relations:
 	\begin{align*}
		\Psi_k  =  \sO_{\PP(\sF^\vee)}(r) \circ \det \sE  \,[r-1]  \circ \Phi_{k-r} 
		 = \Phi_{k} \circ  \sO_{\PP(\sE)}(-r)  \circ \det \sE[r-1]
	\end{align*}
Similarly, if we introduce the flopping functors in the other direction:
	$$\Phi'_k: D(\tX) \to D(\pX), \quad \Phi'_k(\blank) =  \RR q_{1*} \, (\LL q_2^{*}(\blank) \otimes \sO(k,k)), \qquad k \in \ZZ,$$
then the right adjoint $\Psi_k^R = I_1^!\circ \sO(-k,-k) \circ I_2$ (which will be the inverse) of $\Psi_k$ is
	\begin{align*}
	\Psi_k^{R} 
		 = \sO_{\PP(\sE)}(r) \circ \det \sF^\vee [1-r] \circ \Phi'_{-k-1} 
		 = \Phi'_{r-1-k} \circ \sO_{\PP(\sF^\vee)}(-r) \circ \det \sF^\vee [1-r] 
	\end{align*}
Therefore, to prove the theorem, we only need to show that for any $k \in \ZZ$, $\Psi_k$ is an  equivalence, and the following holds:
	\begin{equation}\label{eqn:Psi:twist}
	\Psi_{k-1}^{-1} \circ \Psi_{k} = T'_{J_{k}} [2], \qquad \Psi_{k}^{-1} \circ \Psi_{k-1} = T_{J_{k}} [-2],\end{equation}
	where $J_{k} = S_{r-k}$; Then the ``flop--flop" functor will be equal to
	\begin{align*} & \RR q_{1*} \, \LL q_2^{*} \,\, \RR q_{2*} \, \LL q_1^{*}\,  = \Phi_0' \circ \Phi_0 = \Psi_{r-1}^{-1} \circ \Psi_r \circ \sL^\vee 
	 = T_{S_0}^{-1} \circ \sL^\vee [2].
	\end{align*}

We prove the desired result by performing mutations on the ``chessboard" diagram \ref{fig_auto}. In the situation of proof of Thm. \ref{thm:duality}, denote $\sD_1 = D(\pX)$, $\sD_2 = D(\tX)$, and $I_i : \sD_i \hookrightarrow D(\shH)$ the inclusions for $i=1,2$, $I_i^*$, $I_i^!$ the left and respectively right adjoints of $I_i$. (Notice that the inclusion functors are given explicitly by $I_i = j_{i*} \, p_i^*: \sD_i \hookrightarrow D(\shH)$ for $i=1, 2$). Denote  $\sigma  := \otimes \sO_{\shH}(1,1) \colon D(\shH) \to D(\shH)$ the autoequivalence of $D(\shH)$ of tensoring the line bundle $\sO(1,1)$ on $\shH$, and denote the subcategories:
	$$\shE(\alpha,\beta): = D(X)\otimes (\sO_{\PP(\sE)}(\alpha) \boxtimes \sO_{\PP(\sF^\vee)}(\beta))|_\shH \subset D(\shH),$$
and 	$\shE(*, \beta) = \langle \shE(k, \beta) \mid k \in \ZZ \rangle \subset D(\shH)$, 
	$\shE(\alpha, *) = \langle \shE(\alpha, k) \mid k \in \ZZ \rangle \subset D(\shH),$
where $\alpha, \beta \in \ZZ$. Notice $\sigma (\shE(\alpha,\beta)) = \shE(\alpha+1,\beta+1)$, $\sigma (\shE(*,\beta)) = \sigma (\shE(*,\beta+1))$, etc. And the decomposition in the proof of Thm. \ref{thm:duality} takes a standard form of a ``chess game'':
	\begin{align*}
	D(\shH)  = \big \langle I_1(\sD_1), ~ \shE(*,1), \ldots,\shE(*, r-1) \big \rangle 
	 = \big \langle I_2(\sD_2), ~\shE(1,*), \dots, \shE(r-1,*)\big \rangle.
	\end{align*}
Therefore the result of Thm \ref{thm:duality} is $I_2^* \circ I_1: \sD_1 \simeq \sD_2$ is an equivalence. But notice since 
	\begin{align*}
	I_2^* \circ \sigma^k  = \LL_{\langle \shE(1,*), \dots, \shE(r-1,*) \rangle} \circ \sigma^k =
	\sigma^k \circ \LL_{\langle \shE(1-k,*), \dots, \shE(r-1-k,*) \rangle} = \sigma^k \circ I_2'^*,
	\end{align*}
where $\sD_2':=\sigma^{-k} \sD_2$, and $I_2'^*$ is the left adjoint of the inclusion $I_2'\colon \sD_2 \hookrightarrow D(\shH)$. Apply the ``chess game" Thm. \ref{thm:CG} to $\sD_1$ and $\sD_2'$, one obtains $I_2'^* \circ I_1 \colon \sD_1 \simeq \sD_2'$. Therefore the functor
	$$\Psi_k =  I_2^* \circ \sigma^k \circ I_1 = \sigma^k \circ (I_2'^* \circ I_1) \colon  \sD_1 \xrightarrow{\sim} \sD_2'  \xrightarrow{\sim} \sD_2$$ is an equivalence for all $k \in \ZZ$. Note that $\Psi_k$ is just the ``parallel transport" of the functor $\Psi_0 = I_2^* \circ I_1$ on the ``chessboard" (Figure \ref{fig_auto}) downwards by $k$ steps.

% Claim
\smallskip \noindent \textbf{Claim.} (\ref{eqn:Psi:twist}) holds for $\Psi_k$, i.e. $\Psi_k = \Psi_{k-1} \circ T_{J_k}'[2]$, for all $k \in \ZZ$.

\smallskip\noindent {\em Proof of claim.} Since the case for general $k$ just amounts to parallel transport of the ``chessboard" downwards by $k-1$ steps from the case $k=1$ (i.e. replace all $\shE(\alpha,\beta)$ by $\shE(\alpha+1-k, \beta)$), we only need to show the case for $k=1$, that is to show
	$$\Psi_1 = \Psi_0 \circ T_{\shE(0,1)}' [2],$$
where $T_{\shE(0,1)}'$ denotes the dual twist around the composition $J_1  \colon \shE(0,1) \hookrightarrow D(\shH) \xrightarrow{I_1^!} \sD_1$.

First, we can compare $\Psi_1 = I_2^* \circ \sigma \circ I_1$ with right mutation on the chessboard using Lem. \ref{lem:Rot:Hyp}, from which we know $\foR|_{\sD_1} = \bL_{\shE(*,1)} \circ \sigma = [2]$. Therefore $\sigma|_{\sD_1} = \bR_{\shE(*,1)} [2] |_{\sD_1}$, and 
	$$ \Psi_1 = I_2^* \circ \sigma \circ I_1 = I_2^* \circ \bR_{\shE(*,1)} \circ I_1 \,[2] = I_2^* \circ \bR_{\shE(0,1)} \circ I_1 \,[2],$$
where the last equality follows from $I_2^*$ is left mutation passing through $\langle \shE(1,*), \dots, \shE(r-1,*) \rangle$, therefore kills all $\shE(\alpha,1)$ for all $\alpha \ge 1$.

Next, it remains to compare the right mutation functor $\bR_{\shE(*,1)}$ with dual twist functor $T_{\shE(*,1)}'$. The strategy is similar to the proof of Lem. \ref{lem:mut:Bl} and Lem. \ref{lem:Rot:Hyp}. Denote $E: \shE(0,1) \hookrightarrow D(\shH)$ the inclusion, and $E^*$ its left adjoint as usual. Then we have a diagram
	\begin{equation}
	\begin{tikzcd}
	 I_1 \, T_{\shE(0,1)}' \ar[dashed]{r}  \ar{d}& \bR_{\shE(0,1)} \, I_1 \ar{d} &  \\
	 I_1 \, \Id \ar[equal]{r}  \ar{d} &  \id \,I_1\ar{r}  \ar{d} & 0 \ar{d}   \\
	 I_1 I_1^! \, E\,E^* \, I_1 \ar{r}& E\,E^*\,I_1 \ar{r}& \cone(I_1I_1^! \to \Id) (E E^* I_1)
	\end{tikzcd}
	\end{equation}
from the very definition of right mutations and dual twist. Notice $i_1^!$ is the right mutation functor passing through the category $\sD_1^\perp = \langle \shE(*, 2-l), \ldots, \shE(*,0) \rangle$, and from standard mutation computations (see Lem. \ref{lem:app:cone}), the image of $\cone(I_1I_1^! \to \Id)$ on $\shE(0,1)$ belongs to the staircase region 
	$$\big \langle \shE(\alpha,\beta)  \big \rangle_{1 \le \alpha \le r-1, ~\alpha+1-r \le \beta \le 0},$$ 
in particular contained in ${}^\perp \sD_2$, therefore killed by $I_2^*$. The situation is described in figure \ref{fig_auto}.  

\begin{figure}[h]
\begin{center}
\includegraphics[height= 2.6in]{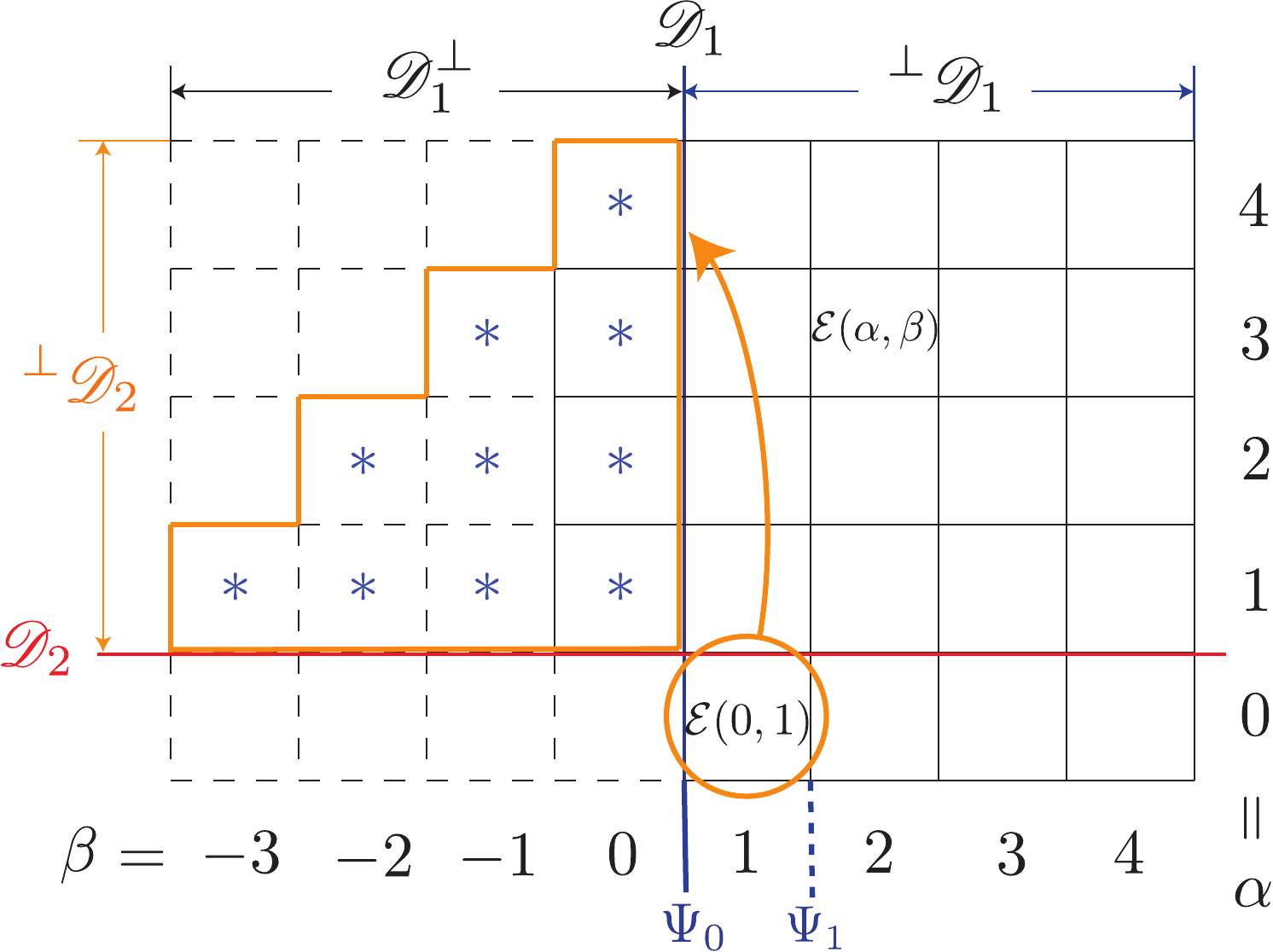}
\caption{The chessboard with boxes indicating $\shE(\alpha,\beta)$ for $\sD_1 = D(\pX)$, $\sD_2 = D(\tX)$, and $I_i : \sD_i \hookrightarrow D(\shH)$ the inclusions, $i=1,2$.  The staircase region marked with `$*$' indicate the image of $\shE(0,1)$ under $\cone(I_1 I_1^! \to \Id)$.} \label{fig_auto}
\end{center}
\end{figure}

Hence $ \Psi_0 \circ T_{\shE(0,1)}' [2] = I_2^* \circ I_1 \circ T_{\shE(0,1)}' [2] \simeq I_2^* \circ \bR_{\shE(0,1)} \circ I_1 \,[2] = \Psi_1. $ \hfill QED of {\bf Claim}.

\medskip

Now the claim is proved, and it remains to compute $J_k$ expliclity. Back to the geometric situation of Thm. \ref{thm:duality}, where the functors $J_k$ are given by the composition:
	$$J_k  \colon D(X) \xrightarrow{E_k} D(\shH) \xrightarrow{I_1^!} D(\pX),$$
and $E_k: D(X) \hookrightarrow D(\shH)$ are the twisted fully faithful embeddings: 		$E_k = a_{\shH}^* (\blank) \otimes \sO_{\shH}(1-k,1),$
where $a_{\shH}: \shH \to X$ is the natural projection. Since $I_1^! = p_{1*} \, j_1^! = p_{1*} \circ j_1^* \circ \det \sF^\vee \otimes \sO(r-1,-1)[1-r]$. Therefore $J_k(\blank) = (\pi^{+*}(\blank) \otimes \sO(r-k,0))  \otimes \det \sF^\vee \, [1-r]$. Notice $\det \sF^\vee \, [1-r]$ comes from autoequivalences from the source, therefore does not effect the twist functor, therefore we can drop them in the expression of $J_k$. Denote $S_{k} = J_{-k+r}$, i.e. $S_{k} = (\pi^{+*}(\blank) \otimes \sO(k,0))$, and (\ref{eqn:Psi:twist}) translates to the desired results for $\Phi_k$ and $S_k$.
\end{proof}

\medskip
Notice although we have proved $T_{S_k}$'s are autoequivalences, we have not shown yet $S_k$'s are indeed {\em spherical}. This follows from the following general observation:
%%% Lemma
\begin{lemma}\label{lem:spherical_stratified} Let $\pi: Z \to X$ be a map between smooth varieties which can be factorized into a closed immersion $i: Z \hookrightarrow \PP(\sE)$ followed by a smooth morphism $a: \PP(\sE) \to X$, 
	$$
	\begin{tikzcd}
		Z \ar[hook]{r}{i} \ar{rd}[swap]{\pi} & \PP(\sE) \ar{d}{a}\\
			& X,
	\end{tikzcd}
	$$
where $\sE,\sF$ are vector bundles both of rank $r \ge 2$ on $X$, and $i$ is cut out by a regular section $s$ of $a^*\sF^\vee \otimes \sO_{\PP(\sE)}(1)$. Then the image $\overline{Z}: = \pi(Z) \subset X$ is a determinantal hypersurface cut out by a section $\det \sigma$ of the line bundle $\sL$, where $\sL = \det \sF^\vee \otimes \,\det \sE$, $\sigma \in \Hom_X(\sF, \sE) = \Hom_{\PP(\sE)}(a^*\sF, \sO_{\PP(\sE)}(1))$ corresponds to $s$ (i.e. $\overline{Z}  = D_{r-1}(\sigma) = Z(\det \sigma) \subset X$, if we use the notation of \S \ref{sec:deg}). Then for each $k \in \ZZ$, 
\begin{enumerate}
	\item The functor 
	$S_k(\blank) = \LL \pi^* (\blank) \otimes \sO_{\PP(\sE)}(k): D(X) \to D(Z)$
is {\em spherical}.
	\item The cotwist functor
	$C_{S_k} = \cone (\id_X \to \RR\pi_* \LL \pi^*)[-1] \in {\rm Autoeq}(D(X))$ 
is given by $$C_{S_k}(\blank) = (\blank) \otimes \sL^\vee.$$
	\item The twist functor 
	$T_{S_k} = \cone (S_k S_k^R \to \id_Z) \in {\rm Autoeq}(D(Z))$
 is {\em $X$-linear}:
	$$T_{S_k} (\blank \otimes^\LL \LL \pi^* \shF^\bullet) = T_{S_k} (\blank) \otimes^\LL \LL \pi^* \shF^\bullet, \qquad \forall ~\shF^\bullet \in D(X).$$
(Here $S_k^R$ is the right adjoint of $S_k$.) Therefore $T_{S_k}$ only depends on the formal neighborhood of $\overline{Z}$ inside $X$. 
	% (4) 
	\item Over the open subscheme $Z \,\backslash \pi^{-1} (D_{r-2}(\sigma)) \subset Z$, 
	$T_{S_k}$ differs from the identity functor by tensoring a line bundle and a degree shift: 
		$$T_{S_k}(\shE^\bullet)= \shE^\bullet \otimes \pi^* \sL^\vee [2], \quad \text{for} \quad \shE^\bullet \in D(Z \,\backslash \pi^{-1}( D_{r-2}(\sigma))).$$	
	% (5)
	\begin{comment}
	\item Denote $\overline{Z}_1 =  D_{r-2}(\sigma) \,\backslash D_{r-3}(\sigma) \subset \overline{Z}$ the second degeneracy locus, and $E: = \pi^{-1}(\overline{Z}_1) \subset Z$. Then the structure sheaf $\sO_E \in D(E)$ (in fact any line bundle) for the diagram
		$$
		\begin{tikzcd}
			E \ar[hook]{r}{\iota} \ar{d}{p} & Z \\
			\overline{Z}_1 & 
		\end{tikzcd}
		$$
	is $EZ$-spherical in the sense of Horja \cite{Hor}. Denote the $EZ$-spherical functor by $S = \RR \iota_* \, p^* : D(\overline{Z}_1) \to D(Z)$ and denote $T_S$ the twist functor around $S$. Then 
		$$T_F(\shE^\bullet) = T_S (\shE^\bullet) \otimes \sL^\vee [2], \quad \text{for} \quad  \shE^\bullet \in D(Z \,\backslash \pi^{-1}(D_{r-3}(\sigma))).$$
		\end{comment}
	\end{enumerate}
\end{lemma}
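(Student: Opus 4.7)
Since tensoring with $\sO_{\PP(\sE)}(k)$ is an autoequivalence of $D(Z)$ and post-composing with an autoequivalence preserves sphericity and the cotwist, for parts (1) and (2) it suffices to treat $S_0 = \LL\pi^*$. The key input is $\RR\pi_*\sO_Z$, which I will extract from the Koszul resolution of $\sO_Z$ attached to the regular section $s$ of the rank-$r$ bundle $\sV := a^*\sF^\vee \otimes \sO_{\PP(\sE)}(1)$: the $j$-th term is $\wedge^j \sV^\vee = a^*(\wedge^j \sF) \otimes \sO_{\PP(\sE)}(-j)$ in cohomological degree $-j$, $0 \le j \le r$. Applying $\RR a_*$ term-by-term and invoking the standard cohomology of the $\PP^{r-1}$-bundle $a$ (namely $\RR a_*\sO_{\PP(\sE)}(-j) = 0$ for $1 \le j \le r-1$ and $\RR a_*\sO_{\PP(\sE)}(-r) = (\det \sE)^\vee[1-r]$), only the extreme terms $j=0,r$ contribute. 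The resulting two-column spectral sequence degenerates immediately, yielding $H^0(\RR\pi_*\sO_Z) = \sO_X$, $H^{-1}(\RR\pi_*\sO_Z) = \sL^\vee$ (with $\sL := \det \sF^\vee \otimes \det \sE$), and zero otherwise. Since the adjunction unit $u\colon \sO_X \to \RR\pi_*\sO_Z$ is the identity on $H^0$, the long exact sequence forces $\cone(u) \simeq \sL^\vee[1]$, whence the cotwist
\[
C_{S_0}(\blank) = \cone(\id \to \RR\pi_* \LL\pi^*)[-1] = (\blank) \otimes \sL^\vee,
\]
an autoequivalence; this is (2).

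For the sphericity condition $R \simeq CL[1]$ of Definition~\ref{def:spherical}, the right adjoint is $R = \RR\pi_*$. For the left adjoint, the adjunction formula for the regular embedding $i\colon Z \hookrightarrow \PP(\sE)$ (combined with $\det \sV = \det \sF^\vee \otimes \sO(r)$ and $\omega_{\PP(\sE)/X} = a^*\det \sE \otimes \sO(-r)$) yields $\omega_Z = \pi^*(\omega_X \otimes \sL)$, hence $\omega_\pi = \pi^*\sL$ and $\dim \pi = -1$. The paper's formula $f_! = f_*(\blank \otimes \omega_f)[\dim f]$ together with the projection formula then give $L = \RR\pi_*(\blank) \otimes \sL[-1]$, and direct substitution shows $CL[1] = \RR\pi_* = R$, as required.

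For (3), the $X$-linearity of $T_{S_k}$ is a direct projection-formula argument: with $S_k^R(\blank) = \RR\pi_*(\blank \otimes \sO_Z(-k))$, one has $S_k^R(A \otimes^{\LL} \LL\pi^*\shF) = S_k^R(A) \otimes^{\LL} \shF$; applying $S_k$ and using the projection formula again produces $S_k S_k^R(A \otimes \LL\pi^*\shF) \simeq S_k S_k^R(A) \otimes \LL\pi^*\shF$, naturally in the counit, so the cones defining $T_{S_k}$ commute with tensoring by $\LL\pi^*\shF$. This in particular implies that $T_{S_k}$ depends only on a formal neighbourhood of $\overline{Z} \subset X$.

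For (4), I will use this locality to restrict to $V := X \setminus D_{r-2}(\sigma) \subset X$, over which $\pi|_U\colon U \xrightarrow{\sim} \overline{Z}^{\mathrm{sm}} \hookrightarrow V$ is a closed embedding of a smooth Cartier divisor, cut out of $V$ by $\det \sigma \in \Gamma(V, \sL)$ (since all further degeneracy loci are empty on $V$). For the divisorial embedding $\iota$ the derived self-intersection formula gives $\LL\iota^* \RR\iota_*\shE \simeq \shE \oplus \shE \otimes \sN^\vee_{\overline{Z}^{\mathrm{sm}}/V}[1]$, with counit the projection onto the first summand, whence $T_{\LL\iota^*}(\shE) = \shE \otimes \sN^\vee_{\overline{Z}^{\mathrm{sm}}/V}[2]$. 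Using $\sN_{\overline{Z}^{\mathrm{sm}}/V} = \sL|_{\overline{Z}^{\mathrm{sm}}}$ and transporting along $\pi|_U \simeq \iota$ yields $T_{S_0}|_{D(U)}(\shE) = \shE \otimes \pi^*\sL^\vee[2]$. The case of general $k$ follows from $T_{S_k} = (\otimes \sO_{\PP(\sE)}(k)) \circ T_{S_0} \circ (\otimes \sO_{\PP(\sE)}(-k))$ (conjugation by the autoequivalence relating $S_0$ to $S_k$), since the line-bundle twists commute and cancel. The main obstacle will be making this locality reduction precise: verifying rigorously, via (3), that the global twist $T_{S_k}$ on $D(Z)$ restricts to the divisorial twist around $\iota$ on $D(U)$, rather than merely agreeing up to cohomology.
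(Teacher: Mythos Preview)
Your proof is correct and follows essentially the same approach as the paper: Koszul resolution of $\sO_Z$ pushed along $a$ to compute $\RR\pi_*\sO_Z$ and hence the cotwist, then the check $R \simeq CL[1]$ via the relative dualizing sheaf, projection formula for (3), and reduction to a divisorial embedding for (4). One small inaccuracy: in (4) you write $\LL\iota^*\RR\iota_*\shE \simeq \shE \oplus \shE\otimes\sN^\vee[1]$ as a direct sum, which is not true in general for a divisor; what holds is the exact triangle $\shE\otimes\sN^\vee[1]\to\LL\iota^*\RR\iota_*\shE\to\shE$, and this already gives $\cone(\text{counit})=\shE\otimes\sN^\vee[2]$, which is all you need.
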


\begin{proof} For simplicity of notations, the functors in this proof are assumed to be {\em derived}.
\subsubsection*{Proof of (1)$\sim$(3)}
By the projection formula, $\pi_*\, \pi^* (\blank) = (\blank) \otimes \pi_* \sO_Z$. To compute $ \pi_* \sO_Z = a_* \, i_* \sO_Z$, we resolve $i_* \sO_Z$ by the Koszul complex: 
	$$i_* \sO_Z \simeq \shK^\bullet = \left[ \wedge^r \sF(-r) \to \wedge^{r-1} \sF(-r+1) \to \cdots \to \wedge^2 \sF(-2) \to \sF(-1) \to \sO_{\PP(\sE)} \right],$$
where $\sF$ stands for $a^*\sF$, and $(k)$ stands for twisted by $\sO_{\PP(\sE)}(k)$ for $k \in \ZZ$. %, and the complex are considered to be placed between degree $-r$ and $0$, i.e. $\shK^p = 0$ for $p  \not \in [-r,0]$. 
By Grothendieck spectral sequence $R^q a_* (\shK^p) \implies R^{p+q} a_* (\shK^\bullet)$, see e.g. \cite[Rmk. 2.67]{Huy}, we obtain $\pi_*  \sO_Z = [\sL^\vee \xrightarrow{\det \sigma} \sO_X] \simeq \sO_{\overline{Z}}$. Hence there is a triangle
	$ \id_X \to \pi_* \, \pi^* \to \otimes \sL^\vee [1] \xrightarrow{[1]}.$
%
\begin{comment}

Then the stupid truncation $\sigma^{\ge s} \shK^\bullet$, $s\in \ZZ$ (which by definition is the complex $(\sigma^{\ge s} \shK^\bullet)^p = \shK^p$ for $p \ge s$, and $(\sigma^{\ge s} \shK^\bullet)^p=0$ for $p < s$) give rise to a ``Postnikov system'':
	\begin{equation*}
	\begin{tikzcd} [back line/.style={dashed}, row sep=1.8 em, column sep=0.5 em]
	0= \sigma^{\ge 1} \shK^\bullet  \ar{rr} 	& 	& \sigma^{\ge 0} \shK^\bullet \ar{rr} \ar{ld}		&			&  \ar{ld} 	&\cdots \cdots &   \sigma^{\ge -r+1} \shK^\bullet  \ar{rr}  &	&  \shK^\bullet \ar{ld}\\
							& \sO_{\PP(\sE)} \ar{lu}{[1]} &	& \sF (-1) [1] \ar{lu}{[1]} & & & & \wedge^r \sF(-r)[r]   \ar{lu}{[1]}.
	\end{tikzcd}
	\end{equation*}
Applying the functor $a_*$ we obtain a ``Postnikov system'' for $a_* \shK^\bullet \simeq a_* i_* \sO_Z$. Since $a_*  \sO_{\PP(\sE)} = \sO_X$, $a_* (\wedge^r \sF(-r)[r])  = \wedge^r \sF \otimes (\wedge^r \sE^\vee [1-r] ) [r] = \sL^\vee [1]$, and $a_* \shK^p = 0$ for $-r+1 \le p \le -1$. Therefore we obtain a triangle
	$ \sO_X \to \pi_* \sO_Z \to \sL^\vee[1] \xrightarrow{[1]},$
and hence a triangle
	$$ \id_X \to \pi_* \, \pi^* \to \otimes \sL^\vee [1] \xrightarrow{[1]}.$$
	
\end{comment}
%
Notice that $S_k^R S_k = \pi_*\,\pi^*$, where $S_k^R (\blank) = \pi_* ((\blank) \otimes \sO(-k))$ is the right adjoint of $S_k$. 
Therefore 
	$$C_{S_k} = C_{\pi^*} = \otimes \sL^\vee,$$
which is an autoequivalence of $D(X)$. The left adjoint $S_k^L$ of $S_k$ is
	\begin{align*} 
	S_k^L &= \pi_! \circ \sO(-k) = \pi_* ((\blank) \otimes \sO(-k) \otimes \omega_\pi [\dim \pi]) \\
	& = \pi_*((\blank) \otimes \sO(-k) \otimes \pi^*( \det \sF^\vee \otimes \det \sE) [-1] )= \pi_*( (\blank) \otimes \sO(-k)) \otimes \sL [-1].
	\end{align*}
Therefore $S_k^R = C_{S_k} S_k^L [1]$ holds. By Def. \ref{def:spherical}, $S_k$'s are spherical. Since $S_k S_k^R$ and $\id_Z$ are both $X$-linear, $T_{S_k}$ is also  $X$-linear. Hence we have proved (1)$\sim$(3).

\subsubsection*{Proof of (4)} Notice that the restriction $\pi_0=\pi|_{Z^0}: Z^0  \to X$, where $Z^0 = Z \,\backslash \pi^{-1} (D_{r-2}(\sigma))$, factors through an isomorphism 
$Z^0 \simeq \overline{Z}^{\rm sm}$, where $ \overline{Z}^{\rm sm} :=  \overline{Z} \,\backslash D_{r-2}(\sigma)$, followed by a divisoral embedding $j: \overline{Z}^{\rm sm}  \hookrightarrow X$ defined by a section of the line bundle $\sL$. Therefore $\cone(\pi_0^*\,\pi_{0*} \to 1) (\shE^\bullet \otimes \sO(-k)) = \shE^\bullet \otimes \sO(-k) \otimes \sL^\vee[2]$. Left-composing with $\sO(k)$, we are done. \end{proof}

\begin{comment}
\begin{example}[Atiyah flop] Consider the Atiyah flop case in \S \ref{sec:springer}, namely let $X = \End V$, $V \simeq \CC^2$, $\sE = \sF = V \otimes \sO_X$, and $\sigma \in \Hom(\sF,\sE)$ is the tautological map. Then $X_{\sigma} \subset X$ is the determinantal hypersurface with $A_1$-singularity $X_{\sigma} = \{x w - yz = 0\} \subset X$, where $(x,y,z,w)$ is the coordinates on $X = \CC^4$, and $\pX \subset X \times \PP^1$, $\tX \subset X \times (\PP^1)^*$ are two small crepant resolutions of $X_\sigma$, with exceptional loci $\PP^1$ and respectively $(\PP^1)^*$ over the central point $0 \in X$. The normal bundle of $\PP^1$ is $\sO_{\PP^1}(-1)^{\oplus 2}$, and $\pX \dashrightarrow \tX$ is the Atiyah flop. Bondal-Orlov \cite{BO} shows the ``flop--flop" functor is $ \RR q_{1*} \, \LL q_2^{*} \,\, \RR q_{2*} \, \LL q_1^{*}\,  = T^{-1}_{\sO_{\PP^1}(-1)} \in {\rm Autoeq}(D(\pX))$. Compared with Thm. \ref{thm:flop}, we have the following equality:
	$$T_{S} = T_{\sO_{\PP^1}(-1)} [2] \in {\rm Autoeq}(D(\pX)),$$
where $S =\LL \pi^{+*}$ is spherical by above lemma, $\pi^{+}: \pX \to X$ is the natural projection. 
\end{example}
\end{comment}

\begin{example}[Atiyah flop] \label{ex:Atiyah} In the case of Atiyah flop $\pX \dasharrow \tX$ of \S \ref{sec:springer}  ($n=\rank V= 2$), where $\pX \subset X \times \PP^1$ and $\tX \subset X \times (\PP^1)^*$ are two small crepant resolutions of $X_\sigma \subset X = \CC^4$, with exceptional loci $\PP^1$ and respectively $(\PP^1)^*$ over $0 \in X$. The derived equivalence $D(\pX) \simeq \D(\tX)$ is established by Bondal and Orlov \cite{BO}. Furthermore, it is known that ``flop--flop" functor is a spherical twist: $ \RR q_{1*} \, \LL q_2^{*} \,\, \RR q_{2*} \, \LL q_1^{*}\,  = T^{-1}_{\sO_{\PP^1}(-1)} \in {\rm Autoeq}(D(\pX))$, see for example \cite{ADM}. Compared with Thm. \ref{thm:flop}, we have the following equality:
	$$T_{S} = T_{\sO_{\PP^1}(-1)} [2] \in {\rm Autoeq}(D(\pX)),$$
where $S =\LL \pi^{+*}$ is spherical by above lemma ($\pi^{+}: \pX \to X$ is the natural projection). We will provide a direct proof of this fact in Appendix \S \ref{sec:app:Atiyah}, Lemma \ref{lem:app:Atiyah}.
\end{example}

In general if $n >2$, $D_{r-1}(\sigma) \subset \overline{Z}$ is singular and the exceptional loci of $X_\sigma^+$ and $\tX$ are no longer flat over $D_{r-1}(\sigma)$. In this case, the spherical twist $T_{S_k}$ in our theorem is a good candidate for the higher rank generalization of twist $T_{\sO_{\PP^1}(-1)}$ from the Atiyah flop case. % and a good candidate for a ``stratified version" of Horja's $EZ$-spherical functor \cite{Hor}.

 %and can be viewed as a ``stratified version" of Horja's $EZ$-spherical functor \cite{Hor}.

%If the rank $n > 2$, then in general the singular loci $D_{r-1}(\sigma)$ of $\overline{Z}$ is singular, and the exceptional loci of $X_\sigma^+$ and $\tX$ are no longer flat over $D_{r-1}(\sigma)$, therefore the direct analog of the spherical twist $T_{\sO_{\PP^1}(-1)}$  or more generally Horja's $EZ$-spherical functor construction \cite{Hor}, can no longer be applied. In this case, the spherical twist $T_{S_k}$ in our theorem is a good candidate for the higher rank generalization of $T_{\sO_{\PP^1}(-1)}$ from the Atiyah flop case, and can be viewed as a ``stratified version" of Horja's $EZ$-spherical functor.

% section: application to Theta-flops.

\subsection{Applications to $\Theta$-flops} \label{sec:Theta}
In the situation of a $\Theta$-flop $X_+=C^{(g-1)} \dasharrow X_-=C^{(g-1)}$ of diagram (\ref{diag:AJ}), i.e. the case $n=0$ of \S \ref{sec:SymC}, %where $X_{\pm}$ are two resolutions of theta-divisor $\Theta \subset \Pic^{g-1}(C)$, then 
Thm. \ref{thm:flop} and Lem. \ref{lem:spherical_stratified} imply: 

\begin{corollary}[$\Theta$-flops]\label{cor:Theta}
 \begin{enumerate}[leftmargin=*]
 	\item[(i)] For each $k \in \ZZ$, the functor 
 	$$\Phi_k (\blank):=  \RR {q_{2*}} (\LL q_1^*(\blank) \otimes \sO_{\widehat{X}}(k,k))  \colon D(C^{(g-1)}) \to D(C^{(g-1)})$$
is an equivalence of categories. Here $q_1,q_2$ are maps in the fibered diagram (\ref{diag:AJ}).
	 \item[(ii)] For each $k \in \ZZ$, the following functor is spherical:
	 $$S_k(\blank) = \LL (AJ)^{*} (\blank) \otimes \sO_{X_{+}}(k) \colon D(\Pic^{g-1}(C)) \to D(C^{(g-1)}).$$
In particular,  the functors $S_0=\LL (AJ)^*$ and $\LL (AJ^\vee)^*$ are spherical, with cotwist functor both given by $\otimes \sO(\Theta)^\vee \colon D(\Pic^{g-1}(C)) \simeq D(\Pic^{g-1}(C))$, where $\Theta \subset \Pic^{g-1}(C)$ is the theta divisor. The equivalences $\{\Phi_k\}_{k \in \ZZ}$ are connected by spherical twists:
	$$\Phi_{k-1}^{-1} \circ \Phi_{k} = T^{-1}_{S_{-k}}[2] \quad \text{and} \quad \Phi_{k}^{-1} \circ \Phi_{k-1} = T_{S_{-k}}[-2].$$
	 \item[(iii)] ``Flop--flop=twist" holds for $\Theta$-flops:
	$\RR q_{1*} \, \LL q_2^{*} \,\, \RR q_{2*} \, \LL q_1^{*} (\blank)  = T_{\LL (AJ)^*}^{-1}(\blank) \otimes \sO(\Theta)^\vee[2]. 
$
\end{enumerate}
\end{corollary}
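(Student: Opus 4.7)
The plan is to apply Theorem~\ref{thm:flop} and Lemma~\ref{lem:spherical_stratified} to the local presentation constructed in \S\ref{sec:SymC}. Setting $n=0$ in the diagram~(\ref{diag:AJ}), the vector bundles $\sE$ and $\sF$ on $X = \Pic^{g-1}(C)$ both have rank $\deg(D)$, and the map $\sigma = \mu_D^\vee \colon \sF \to \sE$ realizes $X_+ = \PP(\sC_\sigma) = C^{(g-1)}$ and $X_- = \PP(\sC_{\sigma^\vee}) = C^{(g-1)}$ as two Springer-type crepant resolutions of the theta divisor. The expected dimension conditions $\dim X_\pm = \dim \widehat{X} = g-1$ are verified in Lemma~\ref{lem:app:exp.dim}, so Theorem~\ref{thm:flop} applies verbatim and yields (i) together with the twist relations $\Phi_{k-1}^{-1}\circ\Phi_k = T^{-1}_{S_{-k}}[2]$ and $\Phi_k^{-1}\circ\Phi_{k-1} = T_{S_{-k}}[-2]$ appearing in (ii).

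Next I would invoke Lemma~\ref{lem:spherical_stratified} with $Z = X_+$, $\pi = AJ$, and the embedding $i \colon X_+ \hookrightarrow \PP(\sE)$ cut out by the section of $a^*\sF^\vee \otimes \sO_{\PP(\sE)}(1)$ corresponding to $\sigma$. Choosing $D$ of sufficiently large degree ensures $r = \deg(D) \ge 2$, so all hypotheses are met. The lemma immediately gives that $S_k(\blank) = \LL(AJ)^*(\blank)\otimes\sO_{X_+}(k)$ is spherical with cotwist $\otimes \sL^\vee$, where $\sL = \det\sF^\vee\otimes\det\sE$. Exchanging the roles of $\sE$ and $\sF^\vee$ (equivalently, applying the lemma to $\sigma^\vee \colon \sE^\vee \to \sF^\vee$) produces the same line bundle $\sL$ and yields sphericality of $\LL(AJ^\vee)^*$ with the same cotwist.

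The essential geometric step is to identify $\sL$ with $\sO(\Theta)$ on $\Pic^{g-1}(C)$. For this I would observe that $\det\sigma$ is a canonical global section of $\sL$ whose zero scheme is the degeneracy locus $X_\sigma$, and that $X_\sigma$ coincides set-theoretically with the classical theta divisor $\Theta = \{\,\sL' \in \Pic^{g-1}(C) \mid H^0(C,\sL')\neq 0\,\}$, because the rank of the evaluation map $\mu_D^\vee$ at $\sL'$ drops precisely when $H^0(C,\sL')\neq 0$. Since $\Theta$ is an irreducible reduced divisor on the Jacobian, the vanishing scheme of $\det\sigma$ is also reduced, giving $\sL \simeq \sO(\Theta)$. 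Plugging this into the cotwist statement gives the $\otimes \sO(\Theta)^\vee$ assertion of (ii), and (iii) then follows by direct substitution into the ``flop--flop'' identity of Theorem~\ref{thm:flop}(iii), since $\otimes \sL^\vee$ commutes with the spherical twists by $X$-linearity.

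The main obstacle I anticipate is this last identification $\det\sF^\vee\otimes\det\sE \simeq \sO(\Theta)$ as \emph{line bundles} (rather than as divisor classes up to torsion): one needs the scheme-theoretic equality of $X_\sigma$ and $\Theta$, for which the reducedness of the theta divisor on a smooth projective curve of genus $g\ge 1$ is essential. Everything else is a direct specialization of the general machinery in Theorem~\ref{thm:flop} and Lemma~\ref{lem:spherical_stratified}, modulo the verification of expected dimensions handled in the appendix.
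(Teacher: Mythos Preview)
Your proposal is correct and follows essentially the same approach as the paper: the paper simply states that ``Thm.~\ref{thm:flop} and Lem.~\ref{lem:spherical_stratified} imply'' the corollary, and you have correctly unpacked what this entails, including the specialization to $n=0$, the appeal to Lemma~\ref{lem:app:exp.dim} for the expected dimensions, and the identification $\sL = \det\sF^\vee\otimes\det\sE \simeq \sO(\Theta)$ via the degeneracy locus description. The one point you flag as an obstacle---the scheme-theoretic identification of $X_\sigma$ with $\Theta$---is indeed left implicit in the paper but is classical (Riemann's theorem guarantees the theta divisor is reduced), so your caution there is well-placed but not a gap.
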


% Appendix
\appendix
\newpage
\section{``Chess game'' method in the rectangular case} \label{sec:app}
We review the method and results on ``chess game'' \cite{JLX17} in the special case when all decompositions are rectangular. The ``chess game'' was introduced by Richard Thomas in his reinterpretation \cite{RT15HPD} of Kuznetsov's work \cite{Kuz07HPD}. It is a systematic way to compare subcategories based on the techniques of mutations \cite{B, BK}. If two subcategories $\sD_1$ and $\sD_2$ (inside an ambient category) have the property that their difference is {\em linear} in a suitable sense, then ``chess game'' method systematically compares $\sD_1$ and $\sD_2$ through analyzing the patterns of vanishings and mutations in a $2$-dimensional ``chessboard'' like Figure \ref{fig_auto}. 

\subsubsection*{Set-up for rectangular chess game} Let $S$ be a smooth scheme. Assume $\shH$ is a projective $S$-schemes, and $X, Y$ are smooth projective $S$-schemes. Let $\sO_X(1)$ and $\sO_Y(1)$ be line bundles on $X$ and $Y$, such that  there are semiorthogonal decompositions of the form:
		$$D(X) = \langle \shA, \shA(1), \ldots, \shA(e-1) \rangle, \qquad D(Y) = \langle  \shC, \shC(1), \ldots, \shC(f-1) \rangle,$$
	where $e,f \ge 2$ are integers, $\shA \subset D(X)$ and $\shC \subset D(Y)$ are $S$-linear admissible subcategories, and $\shA(k)$ (resp. $\shC(k)$) denotes the image of $\shA$ (resp. $\shC$) under the autoequivalence $\otimes \sO_X(k)$ (resp. $\otimes \sO_Y(k)$). By \cite[Thm. 5.8]{Kuz11Bas}, there is an $S$-linear semiorthogonal decomposition
	$$D(X \times_S Y) = \big \langle (\shA \boxtimes_S \shC) (\alpha,\beta) \big \rangle_{0 \le \alpha \le e-1, 0 \le \beta \le f-1}.$$
The first condition for the ``chess game" is: %is about the relation between $\shH$ and $X\times_S Y$: 
\begin{enumerate}	
	\item[\textbf{(A-1)}] Assume there is an $S$-linear functor $F: D(X \times_S Y) \to D(\shH)$, such that
	$$C_F : = \cone(1 \to RF)[-1] \simeq  \otimes \sO(-1,-1)[*] \colon D(X\times_S Y) \to D(X \times_S Y),$$
where $R$ is the right adjoint of $F$, $\sO(1,1): = \sO_X(1) \boxtimes \sO_Y(1)$, and $\sO(-1,-1)$ is the dual line bundle of $\sO(1,1)$, $[*]$ is the degree shift by some integer $* \in \ZZ$. 
\end{enumerate}

\medskip The condition \textbf{(A-$1$)} in particular holds for $F =\LL i_{\shH}^*$, where $i_{\shH} \colon \shH \hookrightarrow X \times_S Y$ is the inclusion of a divisor of line bundle $\sO(1,1)$, for example, the case $\iota \colon \shH \hookrightarrow  \PP(\sE) \times_X \PP(\sF^\vee) $ considered in the main body of this paper. In this case, the shift $[*] = [0]$ is trivial. The property  \textbf{(A-$1$)} allows us to compute the $\Hom$s on $\shH$ in terms of $\Hom$s on $X \times_S Y$:
    \begin{align*}%\label{cone-for-Hom}
    \begin{split}
     R\Hom_{\shH}(F(A_1\boxtimes\, &B_1), F(A_2 \boxtimes \,B_2))  =  \cone\big(R\Hom_{X}(A_1,A_2(-1))\otimes \\
    & R\Hom_{Y}(B_1,B_2(-1)) [*] \to R\Hom_{X}(A_1,A_2)\otimes R\Hom_{Y}(B_1,B_2) \big),
    \end{split}
    \end{align*}
for any $A_1,A_2 \in D(X)$, $B_1, B_2 \in D(Y)$, where $[*]$ is the same degree shift in $C_F \simeq \sO(-1,-1)[*]$. From this we can directly show that $F|_{(\shA \boxtimes \shC)(\alpha,\beta)}: (\shA \boxtimes \shC)(\alpha,\beta) \to D(\shH)$ is fully faithful, for all $\alpha, \beta \in \ZZ$. Denote the image by:
	$$\shE(\alpha,\beta) : = F((\shA \boxtimes_S \shC) (\alpha,\beta)) \subset D(\shH), \qquad \alpha \in \ZZ, \beta \in \ZZ,$$
	$$\shE(*, \beta) = \langle \shE(k, \beta) \mid k \in \ZZ \rangle \subset D(\shH), \qquad \shE(\alpha, *) = \langle \shE(\alpha, k) \mid k \in \ZZ \rangle \subset D(\shH),$$

\begin{enumerate}
	\item[\textbf{(A-2)}] Assume that there are semiorthogonal decompositions:
	\begin{align*}
	D(\shH)  = \big \langle \sD_1, ~ \shE(*,1), \ldots,\shE(*, f-1) \big \rangle 
	 = \big \langle \sD_2, ~\shE(1,*), \dots, \shE(e-1,*)\big \rangle.
	\end{align*}
	where $\sD_1$ and $\sD_2$ are admissible subcategories of $D(\shH)$. Furthermore, assume that the decomposition can be extended to ``negative directions":		\begin{align*} 
	D(\shH) =& \big\langle \shE(*,k+1-f) , \ldots, \shE(*,0), 
	 ~ \sD_1~, \shE(*,1), \ldots, \shE(*, k-1) \big \rangle, \qquad k=1,2,\ldots, f-1.\\
	 & \big \langle \shE(k+1-e,*) , \ldots, \shE(0,*), 
	 ~ \sD_2~, \shE(1,*), \ldots, \shE(k-1,*) \big \rangle, \qquad k=1,2,\ldots, e-1.
	\end{align*}
\end{enumerate}

This is the most important condition of chess game, which says the difference of the two subcategories $\sD_1$ and $\sD_2$ inside $D(\shH)$ is ``{\em linear}".  The conditions \textbf{(A-$1$)} and \textbf{(A-$2$)}, which are satisfied by the case $\iota \colon \shH \hookrightarrow  \PP(\sE) \times_X \PP(\sF^\vee)$, and $\sD_1 = D(\PP(\sC_\sigma))$, $\sD_2 = D(\PP(\sC_{\sigma^\vee}))$ considered in this paper, is stronger than the condition in {\cite{JLX17}}. 
	
\begin{lemma}[{\cite{JLX17}}]\label{lem:app:cone} Denote by $I_i \colon \sD_i \to D(\shH)$ the inclusion functor, $i=1,2$, and denote by $I_i^*$ (resp. $I_i^!$) the left (resp. right) adjoint of $I_i$.
\begin{enumerate}
 	\item \label{lem:app:cone-1}
	If $b \in \shE(m,0)$, $m \in \ZZ$, then $\cone(b \to I_1 I_1^*(b))$ belongs to: 
 	$$\langle \shE(\alpha,\beta) \rangle_{1 \le \beta \le f-1,~ m - f + \beta \le \alpha \le m-1}.$$
	 \item  \label{lem:app:cone-2}
	  If $b \in \shE(m,1)$, $m \in \ZZ$, then $\cone(I_1 I_1^! (b) \to b)$ belongs to:
 	 $$\langle \shE(\alpha,\beta) \rangle_{1 \le \alpha \le f-1,~ m - f + \alpha \le \beta \le m-1}$$
 	\item \label{lem:app:cone-3}
	If $b \in \shE(0,m)$, $m \in \ZZ$, then $\cone(b \to I_2 I_2^*(b))$ belongs to: 
 	 $$\langle \shE(\alpha,\beta) \rangle_{1 \le \alpha \le e-1,~ m - e + \alpha \le \beta \le m-1}$$
	 \item  \label{lem:app:cone-4} If $b \in \shE(1,m)$, $m \in \ZZ$, then $\cone(I_2 I_2^! (b) \to b)$ belongs to: 
 	 $$\langle \shE(\alpha,\beta) \rangle_{1 \le \beta \le e-1,~ m - e + \beta \le \alpha  \le m-1}.$$
\end{enumerate}
\end{lemma}

\begin{proof} \eqref{lem:app:cone-1} is \cite[Lemma 3.15]{JLX17}, and \eqref{lem:app:cone-3} is \cite[Lemma 3.17]{JLX17}. The other two cases are proved similarly, once we replace the left mutations by the right mutations.
\end{proof}

The case \eqref{lem:app:cone-2} is illustrated in Figure \ref{fig_auto} (in the case $e=f=r$ and $m=1$). Other cases can be illustrated in a similar manner. 

\begin{theorem}[\cite{JLX17}]\label{thm:CG}
\begin{enumerate}
	\item If $e \ge f$, then the funtors $I_1^* I_2 \colon \sD_2 \to \sD_1$ and $I_1^*|_{\shE(m,0)} \colon \shE(m,0) \to \sD_1$, $m \in \ZZ$ are fully faithful, and induce a semiorthogonal decomposition 
		$$\sD_1 = \big \langle I_1^* I_2 \, \sD_2, ~ I_1^*\,\shE(1,0), \ldots, I_1^*\,\shE(e-f,0) \big \rangle.$$
	\item If $e \le f$, then the funtors $I_2^* I_1 \colon \sD_1 \to \sD_2$ and $I_2^*|_{\shE(0,m)} \colon \shE(0,m) \to \sD_2$, $m \in \ZZ$ are fully faithful, and induce a semiorthogonal decomposition 
		$$\sD_2 = \big \langle I_2^* I_1 \, \sD_1, ~ I_1^*\,\shE(0,1), \ldots, I_1^*\,\shE(0,f-e) \big \rangle.$$
\end{enumerate}
\end{theorem}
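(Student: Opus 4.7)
The statement is symmetric under interchanging $(X, e, \shA, \sD_1, I_1)$ and $(Y, f, \shC, \sD_2, I_2)$, so it suffices to treat case (1) with $e \ge f$. The plan is threefold: (a) show that each of the functors $I_1^* \circ I_2 : \sD_2 \to \sD_1$ and $I_1^*|_{\shE(m,0)} : \shE(m,0) \to \sD_1$ for $m = 1, \ldots, e-f$ is fully faithful; (b) show that their images are pairwise semiorthogonal in $\sD_1$; and (c) show that these images together generate $\sD_1$. The two main tools are the $\RR \Hom$ formula on $\shH$ obtained from the cone triangle for $C_F$ in condition (A-1),
\[
\RR \Hom_\shH(F(A_1 \boxtimes B_1), F(A_2 \boxtimes B_2)) = \cone\bigl(\RR \Hom(A_1, A_2(-1)) \otimes \RR \Hom(B_1, B_2(-1))[*-1] \to \RR \Hom(A_1, A_2) \otimes \RR \Hom(B_1, B_2)\bigr),
\]
together with the staircase estimates of Lemma \ref{lem:app:cone} that pin down where the cones of the adjunction units and counits land within the chessboard of subcategories $\shE(\alpha, \beta) \subset D(\shH)$.

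For fully faithfulness, the idea is to reduce every $\RR \Hom$ in $\sD_1$ to an $\RR \Hom$ in $D(\shH)$ using the adjunction $I_1^* \dashv I_1$, and then to vanish the discrepancy against the natural identification. For instance, for $b_1, b_2 \in \sD_2$, adjunction gives $\RR \Hom_{\sD_1}(I_1^* I_2 b_1, I_1^* I_2 b_2) = \RR \Hom_{D(\shH)}(I_2 b_1, I_1 I_1^* I_2 b_2)$, and the cone of the natural map $I_2 b_2 \to I_1 I_1^* I_2 b_2$ is placed by Lemma \ref{lem:app:cone} inside a specific staircase region of boxes $\shE(\alpha, \beta)$; I would then argue that $I_2 b_1 \in \sD_2$ is semiorthogonal to this region by appealing to the vertical sod and its extensions in (A-2). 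A completely analogous cone-chase, using the explicit presentation $\shE(m, 0) = F(\shA \boxtimes \shC) \otimes \sO(m, 0)$, settles the fully faithfulness of $I_1^*|_{\shE(m, 0)}$ for $m = 1, \ldots, e-f$.

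For semiorthogonality and generation, the argument is a sequence of mutations on the two-dimensional chessboard of the $\shE(\alpha, \beta)$. Semiorthogonality between $I_1^* I_2 \sD_2$ and each $I_1^* \shE(m, 0)$, and among the $I_1^* \shE(m, 0)$ themselves, follows by the same recipe: reduce each $\RR \Hom$ to $D(\shH)$ by adjunction, and use Lemma \ref{lem:app:cone} together with the extended sods of (A-2) to vanish the error. For generation, I would start from the vertical sod $D(\shH) = \langle \sD_2, \shE(1, *), \ldots, \shE(e-1, *)\rangle$ and apply the projection $I_1^*$; within each column $\shE(m, *)$ for $m \ge 1$, further mutations using the horizontal sod show that only one translate survives meaningfully in $\sD_1$, namely $\shE(m, 0)$, because the translates with $\beta \ge 1$ already sit in $\sD_1^\perp$ and are killed, while those with $\beta \le -1$ get absorbed into earlier columns via the extended horizontal sod. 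When $e \ge f$, precisely the columns $m = 1, \ldots, e-f$ survive as new pieces; the remaining columns with $m \ge e - f + 1$ get absorbed back into $I_1^* I_2 \sD_2$.

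The main obstacle is bookkeeping: one must simultaneously track both extended decompositions of (A-2), the degree shift $[*]$ from condition (A-1), and precisely which staircase region each intermediate cone lands in after every mutation step. The chessboard picture of \cite{JLX17}, in the style of Figure \ref{fig_auto}, encodes all these mutations pictorially and makes the semiorthogonal structure transparent; translating the picture into formal derived-categorical arguments is routine but delicate, and requires patience with indices. Once the chessboard bookkeeping is done, both the fully faithfulness and the generation statements fall out immediately from the cone locations predicted by Lemma \ref{lem:app:cone}.
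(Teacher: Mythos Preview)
Your proposal is correct and follows exactly the chess game strategy the paper invokes; indeed the paper's own ``proof'' consists entirely of a citation to \cite{JLX17} (specialized to the rectangular case $\shA_0 = \cdots = \shA_{e-1} = \shA$, $\shC_0 = \cdots = \shC_{f-1} = \shC$) and to \cite{RT15HPD}, so your sketch is considerably more detailed than what appears here. One small caveat: Lemma~\ref{lem:app:cone} as stated controls the cone of $b \to I_1 I_1^* b$ only for $b$ in a single box $\shE(m,0)$, not for $b \in I_2(\sD_2)$; in practice one handles $I_2 b_2$ by first filtering it through the vertical boxes (or, equivalently, by observing directly that the cone of $I_2 b_2 \to I_1 I_1^* I_2 b_2$ lies in $\langle \shE(*,1),\ldots,\shE(*,f-1)\rangle$ and then using the extended decompositions of (A-2) to place it in ${}^\perp I_2(\sD_2)$), but this is exactly the ``routine but delicate'' bookkeeping you already flag.
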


\begin{proof} This is \cite[Theorem 3.12]{JLX17} on ``chess game" in the rectangular case, with $\shA_0 = \ldots = \shA_{i-1} = \shA$, $\shC_0 = \ldots = \shC_{\ell-1} = \shC$, $i=e$, $\ell =f$. Note also that in this case, the argument using ``chess game'' in \cite{RT15HPD} could also be applied directly to this situation.
\end{proof}

%\subsection{Autoequivalences via ``chess game"} 
We could summarise what we have shown in the proof of Thm. \ref{thm:flop} in terms of  ``chess game" as follows. In the above situation, assume \textbf{(A-$1$)} and \textbf{(A-$2$)} hold.%Furthermore, assume that the ``chess game" is {\em homogenous}, i.e. there is an autoequivalence $\sigma \in \operatorname{Auteq}(D(\sH))$, such that $\sigma (\shE(\alpha,\beta)) = \shE(\alpha+1,\beta+1)$, $\alpha, \beta \in \ZZ$. Moreover, assume that the rotation functor $\foR: = I_1^* \circ \sigma \circ I_1 \colon \sD_1 \to \sD_1$ is an autoequivalence.

\begin{theorem}\label{thm:CG:auto} If $e=f$, and the ``chess game" is {\em homogenous}, i.e. there is an autoequivalence $\sigma \in \operatorname{Auteq}(D(\shH))$, such that $\sigma (\shE(\alpha,\beta)) = \shE(\alpha+1,\beta+1)$ for all $\alpha, \beta \in \ZZ$. Moreover, assume the rotation functor $\foR: = I_1^* \circ \sigma \circ I_1 \colon \sD_1 \to \sD_1$ is an autoequivalence. Then
	\begin{enumerate} 
	\item For each $k \in \ZZ$, the functor 
	$\Psi_k := I_2^* \circ \sigma^k \circ I_1 \colon \sD_1 \to \sD_2$
is an equivalence.
	\item For each $k \in \ZZ$, denote $E_k \colon \shE (1-k,1) \hookrightarrow D(\shH)$ the inclusion functor, and let $J_k: = I_1^! \circ E_k \colon \shE (1-k,1) \to \sD_1$ be the composition. Then the twist functor $T_{J_k} \colon \sD_1 \to \sD_1$ is an autoequivalence. Moreover, for each $k$, the following holds:
	$$\Psi_{k-1}^{-1} \circ \Psi_{k} = T^{-1}_{J_{k}} \circ \foR, \qquad \Psi_{k}^{-1} \circ \Psi_{k-1} = \foR^{-1} \circ T_{J_{k}}.$$
	\end{enumerate}
\end{theorem}

\begin{example}[Projectivization] In the main situation considered in this paper, $F = \LL \iota_{\shH}^*$, where $\iota_{\shH}  \colon \shH \hookrightarrow  \PP(\sE) \times_X \PP(\sF^\vee)$ is an inclusion of an $\sO(1,1)$-divisor, $I_1 = \RR j_{1*} \LL p_1^* \colon \sD_1 = D(\PP(\sC_\sigma)) \hookrightarrow D(\shH)$, $I_2 = \RR j_{2*} \LL p_2^* \colon  \sD_2 = D(\PP(\sC_{\sigma^\vee})) \hookrightarrow D(\shH)$. Then $C_F = \cone(1 \to \RR \iota_{\shH *} \LL \iota_{\shH}^*)[-1] = \otimes \sO_{\shH}(-1,-1)$, $\sigma = \otimes \sO_{\shH}(1,1)$. The rotation functor is $\foR = [2]$ by Lem. \ref{lem:Rot:Hyp}. In the proof of Thm. \ref{thm:duality}, we showed that \textbf{(A-$1$)} and \textbf{(A-$2$)} are satisfied. In the case $e=f$, the condition of Thm. \ref{thm:CG:auto} is also satisfied, and Thm. \ref{thm:CG:auto} reduces to Thm. \ref{thm:flop}.
\end{example}

\begin{example}[Standard flips] \label{ex:standardflips} Let $S$ and $X_\pm$ be smooth varieties. Assume $\sF$ and $\sE$ are vector bundles over $S$ with ranks $f$ and $e$. Let $i_{1} \colon P_+ = \PP_S(\sE) \hookrightarrow X_+$ and $i_2 \colon P_- = \PP_S(\sF^\vee) \hookrightarrow X_-$ be regular closed immersions such that $\sN_{i_1} = \sO_{P_+}(-1) \otimes \sF$, and $\sN_{i_2} = \sO_{P_-}(1) \otimes \sE^\vee$. Assume furthermore that $\widetilde{X} =\Bl_{P_+} X_+ = \Bl_{P_-} X_-$ is the common blowup, with inclusion of exceptional divisor $i_E \colon E \to \widetilde{X}$. Then $E =P_+ \times_S P_- = \PP_S(\sE) \times_S \PP_S(\sF^\vee)$, and $\sO_{\widetilde{X}}(E)|_E = \sO_E(-1,-1)$. Hence there is a commutative diagram, with maps as indicated:
\begin{equation*}
\begin{tikzcd}[row sep=1 em, column sep=2 em]
	 & & E = P_+ \times_S P_- \ar[hook]{d}[swap]{i_E} \ar{lldd}[swap]{p_1} \ar{rrdd}{p_2}& & \\
	 & & \widetilde{X} \ar{ld}[swap]{q_1} \ar{rd}{q_2} & & \\
	P_+ \ar[hook]{r}{i_1}&X_+ & &X_- &P_- \ar[hook']{l}[swap]{i_2}
\end{tikzcd}
\end{equation*}
In the rest of the example, we assume all functors are derived. Therefore $F = i_{E*} \colon D(E) = D(P_+ \times_S P_-) \to D(\widetilde{X})$ satisfies
 	$$C_F = \cone(1 \to i_E^! i_{E*}) [-1] = \otimes \sO_E(E)[-2] = \sO_E(-1,-1)[-2].$$
Hence condition \textbf{(A-$1$)} is satisfied by $F$ and $\shH = \widetilde{X}$. By Orlov's blowup formula:
 	\begin{align*} 
		D(\tilde{X})	
		& = \langle  q_1^* D(X_+), i_{E*} (D(P_+)\boxtimes_S \sO_{P_-}), i_{E*} (D(P_+)\boxtimes_S \sO_{P_-}(1)), \ldots,  i_{E*} (D(P) \boxtimes_S \sO_{P_-}(f-1) \rangle); \\		
		& = \langle  q_2^* D(X_-), i_{E*}  (\sO_{P_+} \boxtimes_S D(P_-) ),  i_{E*} (\sO_{P_+}(1) \boxtimes_S D(P_-) ), \ldots,  i_{E*}  (\sO_{P_+}(e-1) \boxtimes_S D(P_-) )\rangle,
	\end{align*}
which could also be extended to ``negative directions", see Thm. \ref{thm:blow-up}. Denote by $\shE(\alpha,\beta)$ the image of $D(S)(\alpha-1,\beta-1)\subseteq D(P_+ \times_S P_-)$ under the functor $F = i_{E*}$, and $I_1 = q_1^*$, $I_2 = q_2^*$. Hence \textbf{(A-$2$)} is also satisfied. Assume $e \ge f$, then Thm. \ref{thm:CG} implies:
	\begin{align*}
	D(X_+) =  \big\langle q_{1*} \, q_2^* D(X_-), i_{1*} (\psi_1^*D(S)), \ldots,   i_{1*} (\psi_1^*D(S) \otimes \sO_{P_+}(e-f-1)) \big \rangle,
	\end{align*}
where $\psi_1 \colon P_+ \to S$ is the projection. Set $\sigma := \otimes \sO_{\widetilde{X}}(-E) \colon D(\widetilde{X}) \to D(\widetilde{X})$. By Lem. \ref{lem:mut:Bl}, the rotation functor for blowup is $\foR = \Id$. Hence if $e=f$, all conditions of Thm. \ref{thm:CG:auto} are satisfied. Therefore for each $k \in \ZZ$, the functor
 	$$\mu_k := I_2^* \circ \sigma^k \circ I_1 %= q_{2!} (q_1^*(\blank) \otimes \sO_{\widetilde{X}}(-kE)) 
	= q_{2*}(q_1^*(
	\blank) \otimes \sO_{\widetilde{X}}((e-k)E)): D(X_+) \to D(X_-)$$
is an equivalence of categories. The inverse of $\mu_k$ is given by 
	$$\mu_k^{-1} = q_{1*} (q_2^*(\blank) \otimes \sO_{\widetilde{X}}(kE)) \colon D(X_-) \to D(X_+).$$
Furthermore, denote by $E_k$ the inclusion functor of $\shE(1-k,1)= i_{E*}(D(S)(-k,0)) \subseteq D(\widetilde{X})$. Then $J_k = I_1^! E_k = q_{1*} E_k$ is the functor 
	$$J_{k}  =  i_{1*} (\psi_1^*(\blank) \otimes \sO_{P_+}(-k)) \colon D(S) \to D(X_+).$$ 
Denote $T_{J_{k}}$ the twist functor around $J_k$. Since $\foR = \Id$, Thm. \ref{thm:CG:auto} further implies:
	$$ \mu_{k}^{-1} \circ \mu_{k-1} =  T_{J_{k}}, \quad \text{and} \quad \mu_{k-1}^{-1} \circ \mu_{k} =  T^{-1}_{J_{k}}.$$
This agrees with the result of Addington, Donovan and Meachan \cite[Thm. A]{ADM}. In fact, our functors $\mu_k$'s are related with their functors $\mathrm{BO}_k$'s by $\mu_k = \mathrm{BO}_{e-k}$ and $\mu_k^{-1} = \mathrm{BO'}_{k}$,  $e = n+1$. The functor $J_k$ is EZ-spherical in the sense of Horja \cite{Hor}; see \cite{ADM} for more details.
\end{example}

\subsection{Atiyah flops} \label{sec:app:Atiyah} In this subsection, we provide a direct proof of the equality mentioned in Ex. \ref{ex:Atiyah} for the Atiyah flop $\pX \dasharrow \tX$ of \S \ref{sec:springer}. Let $X = \End(V)$, where $V = \CC^{\oplus 2}$, and let $\sigma \colon V\otimes \sO_X \to V \otimes \sO_X$ be the tautological map. Then $X_{\sigma} \subset X = \End(V)$ is the determinantal hypersurface defined by $\det \sigma \colon \sO_X \to \sO_X$. Furthermore, $\pX \subset X \times P_+$ and $\tX \subset X \times P_-$ are two small crepant resolutions of the threefold $X_\sigma$, where $P_+ = \PP(V) \simeq \PP^1$, $P_- = \PP(V^\vee) \simeq (\PP^1)^*$. Hence we have a commutative diagram:
$$
	\begin{tikzcd}
		& P_+  \ar{d}{p} \ar[hook]{r}{i}& \pX \ar[hook]{r}{\iota} \ar{rd}{\pi} & X \times P_+ \ar{d}{q}\\
		& \Spec \kk \ar[hook]{rr}{j} & & X =\End(V).
	\end{tikzcd}
	$$
Here we use $i$, $\pi$, etc, instead of $i_1, \pi_+$, etc, for simplicity of notations. The maps $i$ and $\iota$ are closed immersions given by regular sections of vector bundles, and their normal bundles are given by $\sN_i = \sO_{P_+}(-1) \otimes V^\vee$, $\sN_\iota = \sO_{P_+}(1)\otimes V^\vee|_{\pX}$. 

%In the case of Atiyah flop $\pX \dasharrow \tX$ of \S \ref{sec:springer}  ($n=\rank V= 2$), where $\pX \subset X \times \PP^1$ and $\tX \subset X \times (\PP^1)^*$ are two small crepant resolutions of $X_\sigma \subset X = \CC^4$, with exceptional loci $\PP^1$ and respectively $(\PP^1)^*$ over $0 \in X$. The derived equivalence $D(\pX) \simeq \D(\tX)$ is established by Bondal and Orlov \cite{BO}. Furthermore, it is known that ``flop--flop" functor is a spherical twist: $ \RR q_{1*} \, \LL q_2^{*} \,\, \RR q_{2*} \, \LL q_1^{*}\,  = T^{-1}_{\sO_{\PP^1}(-1)} \in {\rm Autoeq}(D(\pX))$, see for example \cite{ADM}. Compared with Thm. \ref{thm:flop}, we have the following equality:$$T_{S} = T_{\sO_{\PP^1}(-1)} [2] \in {\rm Autoeq}(D(\pX)),$$ where $S =\LL \pi^{+*}$ is spherical by above lemma ($\pi^{+}: \pX \to X$ is the natural projection). We provide two proof of this result in the appendix.

%$X = \End(V)$, where $V = \CC^{\oplus 2}$, $\sigma \colon V\otimes \sO_X \to V \otimes \sO_X$ is the tautological map. 
\begin{lemma}\label{lem:app:Atiyah} For any $A \in D(\pX)$, $T_{S}(A) = T_{i_*\sO_{\PP_+}(-1)}(A) [2]$, where $S = \LL \pi^* \colon D(X) \to D(\pX)$.
\end{lemma}
	
\begin{proof} For simplicity of notations, all functors in this proof are assumed to be {\em derived}.

	\medskip
	{\em (Proof via ``chess game".)} Let $\widetilde{X} = \Bl_{P_+} \pX = \Bl_{P_-} \tX$ be the blowup of $\pX$ along $P_+$ (resp. $\tX$ along $P_-$), and denote $i_E \colon E \to \widetilde{X}$ the inclusion of the exceptional divisor. Let $\shH = \Bl_{\pX}(X \times P_+) = \Bl_{\tX} (X \times P_-)$ be the blowup of $X \times P_+$ along $\pX$ (resp. $X \times P_-$ along $\tX$). Then there are semiorthogonal decompositions:
	\begin{align}
	 D(\widetilde{X}) & = \langle q_1^*(D(\pX)), i_{E*} (D(P_+) \boxtimes \sO_{P_-})\rangle = \langle q_2^* (D(\tX)), i_{E*}(\sO_{P_+} \boxtimes D(P_-))\rangle;    \label{eq:appA:tX}\\
	  D(\shH)  &=  \langle I_1 (D(\pX)), D(P_+) \boxtimes D(X) (0,1)\rangle = \langle I_2(D(\tX)), D(P_-) \boxtimes D(X) (1,0) \rangle. \label{eq:appA:H}
	\end{align} 
The lemma follows from computing the same functor 
	$\Phi = q_{1*} \, q_2^{*} \,q_{2*} \, q_1^{*} $
twice.
%, where $\mu_k$'s are the functors defined in Ex. \ref{ex:standardflips}, and $\Psi_k$'s are the functors defined in the proof of Thm. \ref{thm:flop}. 
The chess game for  \eqref{eq:appA:tX} implies that $\Phi = \mu_0^{-1} \circ \mu_{1}= T_{i_*\sO_{\PP_+}(-1)}^{-1}$, see Ex. \ref{ex:standardflips}.
The chess game for \eqref{eq:appA:H} implies $\Phi = \Phi_0^{-1} \circ \Phi_{1} = T_S^{-1}[2]$, see Thm. \ref{thm:flop}. Hence the lemma follows.
	
	\medskip
	{\em (Proof via direct computations.)} Denote $F = i_*(p^* (\blank) \otimes \sO_{P_+}(-1))$, $F^R = p_*(i^!(\blank) \otimes \sO_{P_+}(1)) = p_*(i^*(\blank) \otimes  \sO_{P_+}(-1))[-2]$, $S =\pi^*$, $S^R = \pi_*$. There are exact triangles:
		$$F F^R (A) \to A \to T_{i_*\sO_{\PP_+}(-1)}(A) \xrightarrow{[1]}, \qquad S S^R(A) \to A \to T_S(A)  \xrightarrow{[1]}.$$
		
	(i) If $A = \sO_{\pX}$. Then $F^R (A)=0$, hence $T_{i_*\sO_{\PP_+}(-1)}( \sO_{\pX}) =  \sO_{\pX}$. On the other hand, $S^R (A) = [\sO_X \xrightarrow{\det \sigma} \sO_X]$, $S S^R(A) =  \sO_{\pX} \oplus  \sO_{\pX}[1]$. Therefore $T_S( \sO_{\pX}) =  \sO_{\pX}[2]$.
	
	(ii) If $A = \sO_{\pX}(1)$, where $ \sO_{\pX}(k) : = \iota^* (\sO_X \boxtimes \sO_{P_+}(k))$ for $k \in \ZZ$. Then $F^R (A)= H^0(P_+, \sO_{P_+}) = \CC$, $FF^R = i_*(\sO_{P_+}(-1))[-2]$. By using the Koszul resolution $i_* \sO_{P_+} \simeq [\wedge^2 V \otimes \sO_{\pX}(2) \to V \otimes  \sO_{\pX}(1) \to  \sO_{\pX}]$,  we obtain $T_{i_*\sO_{\PP_+}(-1)}( \sO_{\pX}(1) ) = [V \otimes \sO_{\pX} \to  \sO_{\pX}(-1)][-1]$ (where the degree shift $[-1]$ means $V$ is at degree $0$ and $\sO_{\pX}(-1)$ is at degree $1$). On the other hand, $S^R (A)= [V \xrightarrow{\sigma} V]$, and $S S^R(A) =  [V \otimes \sO_{\pX}\xrightarrow{\pi^* \sigma} V \otimes \sO_{\pX}]$. By definition, $\pX$ is the locus where the composition $V \xrightarrow{q^* \sigma} V \twoheadrightarrow \sO_{P_+}(1)$ is zero, hence $\pi^*\sigma$ factorises through $V  \to \sO_{P_+}(-1)|_{\pX} \subseteq  V$. By using the Euler sequence $\sO_{\pX}(-1) \to V \otimes \sO_{\pX} \to \sO_{\pX}(1)$, we obtain $T_S( \sO_{\pX}(1)) = \cone(SS^R(A) \to A) = [V \otimes \sO_{\pX} \to \sO_{\pX}(-1)][1] = T_{i_*\sO_{\PP_+}(-1)}( \sO_{\pX}(1))[2]$.
	
Since $D(\pX)$ is spanned by $\sO_{\pX}$ and $\sO_{\pX}(1)$, the lemma is proved by (i) and (ii). For illustration, we also compute the following case:

	(iii) If $A = i_*\sO_{\PP_+}(-1)$ (the spherical object). Then $T_A (A) = A[-2]$ by $F^R F = \Id \oplus [-3]$. (This is the first property of $A$ for being $3$-spherical, see \cite[Ex. 8.5(ii)]{Huy}). On the other hand, $S^R (A) = \pi_* i_* \sO_{\PP_+}(-1) =  j_* p_* \sO_{\PP_+}(-1) = 0$. Therefore $T_S (A) = A = T_A (A)[2]$.
\end{proof}

% Appendix 2: Brill--Noether
\section{Dimensions, smoothness and Brill--Noether loci} \label{sec:app:SymC}
In this section we assume $\kk = \CC$. For a smooth complex projective curve $C$, $d,k \in \ZZ$, the {\em Brill--Noether loci} is defined as:
	$$W_{d}^k := W_{d}^k(C) : = \{\sL \mid \dim H^0(C,\sL) \ge k+1\} \subset \Pic^d(C).$$
The expected dimension of $W_d^k$ is the {\em Brill--Noether number} $\rho(g,k,d) := g - (k+1)(g-d+k)$. From the works of Kempf \cite{Kempf}, Kleiman--Laksov \cite{KL1,KL2} and Fulton--Lazarsfeld \cite{FL}, we know that
$W_{d}^k \ne \emptyset$ if $\rho(g,k,d) \ge 0$, $W_{d}^k$ is connected if $\rho(g,k,d) \ge 1$. The Brill--Noether inequality and {\em Martens theorem} (see \cite[IV, \S5]{ACGH}) states for $g \ge 3$, the dimension of $W_d^k$ satisfies:
	$$ \rho(g,d,k) \le \dim W_d^k \le d - 2k,$$
if $(d,k) \in \{1 \le d \le g-1, 0 \le k \le \frac{d}{2}\} \cup \{g-1 \le d \le 2g-3, d-g +1 \le k \le \frac{d}{2}\}$. \footnote{The above result is only stated for the range $1 \le k \le \frac{d}{2}$ and $2 \le d \le g-1$ in \cite[IV, \S5]{ACGH}. Notice that if $k=0$, then $\dim W_{d}^0 = \rho(g,d,0) = d$ holds; If $d - 2k <0$, then $W_d^k=\emptyset$ by Clifford inequality. Then all other cases follow from above cases by the canonical isomorphisms $W^k_d \simeq W^{g-d+k-1}_{2g-2-d}$.}

% For a smooth projective curve $C$ of genus $g \ge 1$ and an integer $n \ge 0$, 
For a smooth projective curve $C$ of genus $g \ge 1$ and integer $n \ge 0$, the symmetric products $C^{(g-1\pm n)}$ are equipped with Abel--Jacobi maps to $\Pic^{g-1+n}(C)$ as in \S \ref{sec:SymC}. Consider:
	\begin{align*}
	\widehat{X} : & = C^{(g-1+n)} \times_{\Pic^{g-1+n}(C)} C^{(g-1-n)} \\
		& =   \{(D,D') \mid D \in C^{(g-1+n)}, D' \in C^{(g-1-n)}, D+D' \equiv K_C \} 
	\end{align*}
Let $d: = g-1+n$, and denote $\widehat{\pi} \colon \widehat{X} \to \Pic^d(C)$ the natural projection. Notice that $\widehat{X} \ne \emptyset$ if and only if $0 \le n \le g-1$. 
\begin{lemma} \label{lem:app:exp.dim} If $\widehat{X} \ne \emptyset$, then it has the expected dimension 
	$$\dim \widehat{X} = g-1.$$
\end{lemma}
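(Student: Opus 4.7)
The plan is to realise $\widehat{X}$ as the preimage of the canonical linear system $|K_C| \subset C^{(2g-2)}$ under the ``sum of divisors'' morphism; this will give the dimension at a stroke, bypassing any Brill--Noether analysis.

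First I would introduce the addition morphism
$$\Sigma \colon C^{(g-1+n)} \times C^{(g-1-n)} \to C^{(2g-2)}, \qquad (D, D') \mapsto D + D'.$$
This is a finite morphism: the fibre over an effective divisor $E = \sum_i m_i p_i$ of degree $2g-2$ consists of the finitely many ways to choose a subdivisor $D \le E$ of degree $g-1+n$, with $D' = E - D$ then determined.

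Next I would identify $\widehat{X}$ with $\Sigma^{-1}(|K_C|)$ scheme-theoretically. The defining condition $AJ(D) = AJ^\vee(D')$, i.e.\ $\sO(D) \simeq K_C \otimes \sO(-D')$, is equivalent to $\sO(D+D') \simeq K_C$, which is exactly the condition that $\Sigma(D,D') = D+D'$ lies in the fibre $|K_C| \subset C^{(2g-2)}$ of $AJ_{2g-2} \colon C^{(2g-2)} \to \Pic^{2g-2}(C)$ over the class $[K_C]$; this fibre is isomorphic to $\PP^{g-1}$. Since $\Sigma$ is finite, so is the restriction $\Sigma|_{\widehat X} \colon \widehat X \to |K_C|$.

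To conclude $\dim \widehat X = g-1$, I would verify that $\Sigma|_{\widehat X}$ is surjective: given any $E \in |K_C|$, write $E = p_1 + \cdots + p_{2g-2}$ with multiplicities, set $D = p_1+\cdots+p_{g-1+n}$ and $D' = p_{g+n} + \cdots + p_{2g-2}$, and observe $(D,D') \in \widehat X$ maps to $E$. Then the fibre-dimension theorem for a finite surjective morphism forces every irreducible component of $\widehat X$ to have the same dimension as $|K_C|$, namely $g-1$. The only conceptual point is the first step---recognising that the condition $AJ(D) = AJ^\vee(D')$ is nothing but the condition for the sum $D+D'$ to be a canonical divisor---after which everything reduces to elementary fibre-dimension bookkeeping.
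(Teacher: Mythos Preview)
Your argument is correct and takes a genuinely different route from the paper's proof. The paper stratifies $\widehat{X}$ by the Brill--Noether loci $W^k_d\setminus W^{k+1}_d$ and invokes Martens' theorem to bound each stratum by $\dim W^k_d + k + (k-n) \le (d-2k) + k + (k-n) = g-1$, checking that equality is attained on the top stratum $k=n$. Your approach bypasses Brill--Noether theory entirely: realising $\widehat{X}$ as $\Sigma^{-1}(|K_C|)$ for the finite addition map $\Sigma$ immediately gives $\dim\widehat X\le g-1$, and the explicit splitting of any canonical divisor gives surjectivity onto $|K_C|\simeq\PP^{g-1}$, hence $\dim\widehat X\ge g-1$. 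This is considerably more elementary and also more robust (it does not rely on $g\ge3$ or any case analysis).

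One small point: your final sentence asserts that \emph{every} irreducible component of $\widehat X$ has dimension $g-1$, which is a bit stronger than the lemma's statement $\dim\widehat X = g-1$. Finite surjective alone does not force equidimensionality of the source. If you want the stronger statement, note that $\Sigma$ is in fact flat (a finite morphism between smooth varieties of the same dimension, by miracle flatness), so $\Sigma|_{\widehat X}\colon\widehat X\to|K_C|$ is finite flat and hence open; every component of $\widehat X$ then dominates the irreducible target $|K_C|$ and has dimension $g-1$. For the lemma as stated, however, your argument is already complete without this refinement.
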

\begin{proof}  %Notice that $C^{(g-1-n)} \ne \emptyset$ if and only if $0 \le n \le g-1$.
By assumption $0 \le n \le g-1$. 
If $n = g-1$ then the result clearly holds since $AJ^\vee \colon C^{(g-1-n)} \simeq [\omega_C] \in \Pic^{2g-2}(C)$, and $\widehat{X} = \PP_{\rm sub}(H^0(C,\omega_C))\simeq \PP^{g-1} \subset C^{(2g-2)}$. Hence we need only consider the case $0 \le n \le g-2$ and $g \ge 2$. If $g=2$ then $n=0$, $\widehat{X} \simeq C$ clearly satisfies the condition. Now we focus on the case $g \ge 3$, $d = g-1+n \in [g-1, 2g-3]$. Notice that $\widehat{X}$ is stratified by the locally closed subschemes $\widehat{\pi}^{-1}(W^k_d \,\backslash\, W^{k+1}_d)$, $n \le k \le \frac{d}{2}$. For every irreducible component $Z \subset W^k_d \,\backslash\, W^{k+1}_d$, $\widehat{\pi}^{-1}(Z)$ is a $\PP^k \times \PP^{k-n}$-bundle over $Z$, therefore it follows from Martens theorem above that:
	$$\dim \widehat{\pi}^{-1}(Z) \le d - 2k + k + k-n = g-1.$$
However for $k=n$, $C^{(g-1-n)}$ maps birationally onto $W^{n}_d \simeq W^{0}_{g-1-n}$. Since $W^{n+1}_d \simeq W^{1}_{g-1-n} \ne W^{0}_{g-1-n} \simeq W^{n}_d$,  and the restriction of $AJ^\vee \colon C^{(g-1-n)} \to \Pic^{g-1+n}(C)$ over the non-empty stratum $W^{n}_d \,\backslash\, W^{n+1}_d$ is a bijection, therefore $\dim (W^{n}_d \,\backslash\, W^{n+1}_d) = g-1-n$. Hence
 	$$\dim \widehat{\pi}^{-1}(W^{n}_d \,\backslash\, W^{n+1}_d) = g-1.$$
Therefore the expected dimension $g-1$ is achieved by $\widehat{X}$. \end{proof}

We next focus on the smoothness of $\widehat{X}$. % and answer an question of posed by M. Kapranov. 
Recall $C$ is called a {\em Petri curve} if the Petri map:
	$$\mu_\sL \colon H^0(C,\sL) \otimes H^0(C, \omega_C \otimes \sL^\vee) \to H^0(C,\omega_C)$$
is injective for any $\sL \in \Pic(C)$. By the work of Gieseker \cite{Gie} (also Griffiths--Harris \cite{GH}, Eisenbud--Harris \cite{EH} and Lazarsfeld \cite{Laz}; see also \cite[V]{ACGH}), Petri curves form a open dense subset of the moduli space $\shM_g$ of genus $g$ curves.
\begin{lemma} For a general curve $C$ (in the sense of Petri), $\widehat{X}$ is smooth.
\end{lemma}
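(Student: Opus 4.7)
The plan is to verify smoothness pointwise by a tangent-space dimension count whose key input is the injectivity of the Petri map. Fix $(D,D') \in \widehat{X}$ and set $\sL := \sO_C(D) \in \Pic^{g-1+n}(C)$. Since $X_+\times X_-$ is smooth, I would first write the tangent space via the fibre product as
\[ T_{(D,D')} \widehat{X} = \ker\bigl[\, (dAJ, -dAJ^\vee) \colon T_D X_+ \oplus T_{D'} X_- \longrightarrow T_{\sL}\Pic^{g-1+n}(C) \bigr], \]
reducing smoothness to computing the rank of the combined differential. Using the identification $T_D X_+ = H^0(D,\sO_D(D))$ and the long exact sequence of $0 \to \sO_C \to \sO_C(D) \to \sO_D(D) \to 0$, the connecting map identifies $\Im(dAJ|_D)$ with $\ker(H^1(\sO_C) \to H^1(\sO_C(D)))$, which by Serre duality is the annihilator $W_D^\perp \subset H^1(\sO_C) = H^0(\omega_C)^*$ of
\[ W_D := \Im\bigl(H^0(\omega_C(-D)) \hookrightarrow H^0(\omega_C)\bigr); \]
the analogous computation at $D'$ gives $\Im(dAJ^\vee|_{D'}) = W_{D'}^\perp$ with $W_{D'} := \Im(H^0(\omega_C(-D'))\hookrightarrow H^0(\omega_C))$. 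A direct count then yields
\[ \dim T_{(D,D')}\widehat{X} \;=\; (g-1+n)+(g-1-n) - \bigl(g - \dim(W_D\cap W_{D'})\bigr) \;=\; g-2+\dim(W_D\cap W_{D'}). \]

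The substantive step will be to prove $\dim(W_D\cap W_{D'}) = 1$ for every $(D,D')\in\widehat{X}$, using the Petri hypothesis. Fix sections $s\in H^0(\sL)$ and $t\in H^0(\omega_C\otimes\sL^\vee) \cong H^0(\sO_C(D'))$ with divisors $D$ and $D'$ respectively; then $W_D = s\cdot H^0(\omega_C\otimes\sL^\vee)$ and $W_{D'} = t\cdot H^0(\sL)$. Given $f\in W_D\cap W_{D'}$, write $f = su = tv$ with $u\in H^0(\omega_C\otimes\sL^\vee)$ and $v\in H^0(\sL)$; the element $s\otimes u - v\otimes t\in H^0(\sL)\otimes H^0(\omega_C\otimes\sL^\vee)$ then lies in $\ker\mu_\sL$, which vanishes by the Petri hypothesis. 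Since the resulting identity $s\otimes u = v\otimes t$ is a rank-one tensor with $s,t$ both nonzero, it forces $v=\lambda s$ and $u=\lambda t$ for some $\lambda\in\kk$, hence $f = \lambda\,st$ and $W_D\cap W_{D'} = \kk\cdot st$ is one-dimensional. Plugging back in gives $\dim T_{(D,D')}\widehat{X} = g-1$ at every closed point.

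To conclude smoothness I would combine this with Lemma~\ref{lem:app:exp.dim} (which yields $\dim\widehat{X}\le g-1$) and with the equidimensionality of $\widehat{X}$ of dimension $g-1$. The latter follows from the description of $\widehat{X}$ as the relative projectivization $\PP_{X_-}(\sC_\sigma|_{X_-})$ over the irreducible base $X_-=C^{(g-1-n)}$ set up in \S\ref{sec:SymC}: the pulled-back coherent sheaf has constant generic rank $n+1$, so its projectivization is irreducible of dimension $(g-1-n)+n=g-1$. Once equidimensionality is in hand, the local ring at every closed point of $\widehat{X}$ has Krull dimension equal to the tangent-space dimension $g-1$, and smoothness follows. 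The crux of the argument is the Petri-based identification $W_D\cap W_{D'}=\kk\cdot st$; the subtlety that requires separate care is the last step, since a priori the constancy of the tangent dimension does not rule out stray lower-dimensional components on which $\dim T$ accidentally equals $g-1$.
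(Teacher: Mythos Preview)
Your tangent-space computation is correct and is a genuinely different, more elementary route than the paper's. The paper instead invokes the formal-local structure of $\Pic^d(C)$ near a Brill--Noether stratum: for a Petri curve one has $T_{W^k_d,[\sL]}=\Coker(\mu_\sL)^*$ and $N_{W^k_d/\Pic^d,[\sL]}=\Hom(H^0(\sL),H^1(\sL))$, so the formal neighbourhood of $\widehat{X}$ along $\widehat{\pi}^{-1}([\sL])$ is identified with that of the zero section inside
\[
\Coker(\mu_\sL)^* \times \Tot_{\PP(H^0(\sL)^*)\times\PP(H^1(\sL))}\bigl(\sO(-1)\boxtimes\sO(-1)\bigr),
\]
which is visibly smooth. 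Your argument trades this structural input for a direct use of the Petri condition: the identity $W_D\cap W_{D'}=\kk\cdot st$ via $s\otimes u=v\otimes t$ in $\ker\mu_\sL$ is a neat and self-contained way to force $\dim T_{(D,D')}\widehat{X}=g-1$. This buys you a proof that does not need the local model of $W^k_d$; the paper's approach buys an explicit formal description of $\widehat{X}$ along each fibre.

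There is one genuine gap, which you flag yourself. The sentence ``the pulled-back coherent sheaf has constant generic rank $n+1$, so its projectivization is irreducible'' does not give irreducibility or equidimensionality: the rank of $(AJ^\vee)^*\sC_\sigma$ jumps over the Brill--Noether strata of $X_-$, so a priori $\PP_{X_-}((AJ^\vee)^*\sC_\sigma)$ could acquire extra components there. A clean replacement is to use the addition map
\[
a\colon C^{(g-1+n)}\times C^{(g-1-n)}\longrightarrow C^{(2g-2)},\qquad (D,D')\mapsto D+D',
\]
which is finite and, by miracle flatness (finite morphism from a Cohen--Macaulay scheme to a regular one), flat. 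Then $\widehat{X}=a^{-1}(|K_C|)$ with $|K_C|\cong\PP^{g-1}$ smooth, so $\widehat{X}$ is finite flat over $\PP^{g-1}$ and hence pure of dimension $g-1$. Combined with your tangent computation, every local ring has Krull dimension equal to its embedding dimension $g-1$, and smoothness follows. Alternatively you can argue, as implicit in the paper, that $\widehat{X}$ is a local complete intersection of the expected dimension inside the Cohen--Macaulay scheme $\shH$, which again forces pure dimension $g-1$ at every point.
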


\begin{proof} Since for a Petri curve $C$, and any $d$ and $k$, by \cite[Prop. IV (4.2), Thm. V (1.7)]{ACGH}, $W^k_d$ (is either empty or) has the expected dimension $\rho(g,d,k)$, and $W^{k}_d \, \backslash \, W^{k+1}_d$ is smooth. Hence the claim follows from the following general result.
\end{proof}

\begin{lemma} Let $X$ be a smooth complex variety, and $\sG$ be a coherent sheaf of homological dimension $1$ and rank $r$. Assume that for all $i \ge 0$, $X^{>r+i-1}(\sG) \, \backslash \, X^{>r+i}(\sG)$ is (either empty or) a smooth subvariety of the expected codimension $i(i+r)$. Then $\PP(\sG)$, $\PP(\sExt^1(\sG,\sO))$ and $\PP(\sG) \times_X \PP(\sExt^1(\sG,\sO))$ are smooth of expected dimensions (see Thm. \ref{cor:projectivization}).
\end{lemma}

\begin{proof} We only show that $\widehat{X}: = \PP(\sG) \times_X \PP(\sExt^1(\sG,\sO))$ is smooth and has the expected dimension; The other two cases are similar (and simpler). Denote $\widehat{\pi} \colon \widehat{X} \to X$ the projection.

First, we consider the ``universal" space of Homs $H = \Hom(W,V)$ between vector spaces $W$ and $V$, $\dim W = m$, $\dim V = n$. Let $\tau \colon W \otimes \sO_H \to V \otimes \sO_H$ be the tautological map, i.e. $\tau|_A = A  \colon W \to V$, for any $A = (a_{ij}) \in \Hom(W,V)$. For any $0 \le \ell \le \min\{m,n\}$, let $\DD_\ell \subseteq H$ be the degeneracy locus of $\tau$. For any $A_0 \in \DD: = \DD_\ell \backslash \DD_{\ell-1}$, up to ${\rm GL}(W) \times {\rm GL}(V)$-action, we may assume 
	$A_0 = \big(\begin{smallmatrix}
	1_\ell & 0  \\
	0 & 0  \\
	\end{smallmatrix} \big) \colon W \to V$.  Set $K = \Ker A_0$, $C = \Coker A_0$.
Consider the open neighbourhood $U = \{ A = (a_{ij}) \mid a_{ii} \ne 0, i=1,\ldots, \ell\}\subset  \Hom(W,V)$ of $A_0$. Then standard row and column operations show that there is an identification:
	$$\phi \colon U \xrightarrow{\sim} H' \times E, \qquad \text{where} \qquad H' = \Hom(K,C), \quad E = \AA^{(m-\ell)\ell + (n-\ell)\ell} \times (\GG_{m})^{\times \ell^2},$$
 such that $\DD_\ell \cap U = \phi^{-1} ( \{0 \} \times E)$, see for example \cite[Prop. 2.4]{BV}. Notice that $H'=\Hom(K,C) = N_{\DD/H}|_{A_0}$ is the {\em normal} space of $\DD$ to $H$ at $A_0$.

Denote $\tau$ resp. $\tau'$ the tautological on $H$ resp. $H' : = \Hom(K,C)$, and set $\widehat{H} =\PP_H({\rm Coker}(\tau)) \times_H \PP_H({\rm Coker}(\tau^\vee))$ and $\widehat{H}' = \PP_{H'}({\rm Coker}(\tau')) \times_{H'} \PP_{H'}({\rm Coker}(\tau'^\vee))$. Then 
	$$\widehat{H}' = \Tot_{\PP_- \times \PP_+}(\Omega_{\PP_-}(1) \boxtimes \Omega_{\PP_+}(1)) \subseteq H' \times \PP_- \times \PP_+,$$
where $\PP_- := \PP_\CC(K)$ and $\PP_+ := \PP_\CC(C)$.  The fiber over of $\widehat{H}' \to H'$ over $\{0\} \in H'$ is the zero section $\{0\}\times \PP_- \times \PP_+$. Under the identification $\phi$, it is clear that $\widehat{H}|_U \simeq \widehat{H}' \times E$ is smooth, and there is a sequence of regular closed immersions of smooth subvarieties:
	$$
	\begin{tikzcd}
	& \widehat{H}|_{\DD \cap U} =  (\DD \cap U) \times  \PP_- \times \PP_+ \ar[hook]{r}& \widehat{H}|_U \ar[hook]{r} & U \times \PP_- \times \PP_+.
	\end{tikzcd}
	$$
	% such that $\widehat{H}|_{\DD \cap U}  = (\DD_\ell \cap U) \times  \PP_- \times \PP_+ $.

%For a general $X$, and any $x \in D: = D_\ell(\sigma) \backslash D_{\ell-1}(\sigma)$. Since the problem is local, we may assume that $\sG$ is the cokernel of an injective $\sO_X$-module map $\sigma \colon \sF \to \sE$, and $\sF = W \otimes \sO_X$, $\sE = V \otimes \sO_X$, where $W,V$ are vector spaces.Generally, for any $x \in D: = D_\ell(\sigma) \backslash D_{\ell-1}(\sigma)$, and any $p \in \widehat{\pi}^{-1}(x)$. 

Next, we consider the general case. Since the problem is local, we may assume $\sG$ is the cokernel of an injective $\sO_X$-module map $\sigma \colon \sF \to \sE$, and $\sF = W \otimes \sO_X$, $\sE = V \otimes \sO_X$, where $W,V$ are vector spaces.  For any $\ell$, and any $x \in D: = D_\ell(\sigma) \backslash D_{\ell-1}(\sigma)$, there is a map $f \colon X \to H = \Hom(W,V)$ such that $f(x) = A_0$ and $f^{-1}(\DD_\ell) = D_\ell(\sigma)$, $f^{-1}(\DD) = D$. The base-change $f$ induces a sequence of inclusions 
 %Since $x$ is a smooth point, $f$ is transverse to $\DD_\ell$ at $A_0 =f(x)$.
%Hence there is a map $f \colon X \to H = \Hom(W,V)$ such that $f^{-1}(\DD_\ell) = D_\ell(\sigma)$. The base-change $f$ induces a sequence of inclusions 
%Replacing $X$ by $f^{-1}(U)$, above inclusion induces a sequence of inclusions 
	$$
	\begin{tikzcd}
	& \widehat{\pi}^{-1}(D) \cap f^{-1}(U)  \ar[hook]{r}&  \widehat{X} \cap f^{-1}(U)  \ar[hook]{r} & f^{-1}(U)  \times \PP_- \times \PP_+.
	\end{tikzcd}
	$$
%	$$\widehat{\pi}^{-1}(D) \subseteq \widehat{X} \subseteq X \times \PP_- \times \PP_+.$$
Denote $g = f \times \id_{\PP_-} \times \id_{\PP_+} \colon  f^{-1}(U)  \times \PP_- \times \PP_+ \to U \times \PP_- \times \PP_+$. Since $x$ is a smooth point, the composition $T_{x} X \xrightarrow{(df)_*} T_{f(x)} U \twoheadrightarrow N_{\DD/U}|_{f(x)}$ is surjective (see \cite[\S 5.1, page 54]{FP}). For any $p \in \widehat{\pi}^{-1}(x)$, denote $N_{g(p)}$ is the normal space of $\widehat{H}|_U$ to $U \times \PP_- \times \PP_+$ at $g(p)$. Since the map $N_{\widehat{H}|_{\DD \cap U}/U \times \PP_- \times \PP_+}|_{g(p)} \twoheadrightarrow N_{g(p)}$ is surjective, therefore the composition 
	$$T_{p}(X \times \PP_- \times \PP_+) \xrightarrow{(dg)_*} T_{g(p)}(U \times \PP_- \times \PP_+) \twoheadrightarrow N_{g(p)}$$
is surjective. %where $N_{g(p)}$ is the normal space of $\widehat{H}|_U$ to $U \times \PP_- \times \PP_+$ at $g(p)$. In fact, sinceis the composition of two surjective maps $T_{x} X \xrightarrow{(df)_*} T_{f(x)} U \to N_{\DD_\ell/U}|_{f(x)}$ and $N_{\widehat{H}|_{\DD \cap U}/U \times \PP_- \times \PP_+}|_{g(p)} \twoheadrightarrow N_{g(p)}$.
Hence $g$ is transverse to $\widehat{H}|_U$ at $g(p)$. Therefore $\widehat{X}$ is smooth at $p$ of expected dimension (see \cite[\S 5.1, page 53]{FP}).
\end{proof}

\end{document}